\documentclass[12pt,a4,reqno]{amsart}
\usepackage{latexsym,amssymb,amsmath,amsfonts,mathtools} 
\usepackage[T1]{fontenc}
\usepackage{dirtytalk}
\usepackage[english]{babel}
\usepackage{xcolor}
\usepackage{subcaption}

\usepackage{dsfont} 
\usepackage[breaklinks=true,colorlinks,pdfpagelabels]{hyperref}

\def\sloppy{\tolerance=9999 \hfuzz=.5pt \vfuzz=.5pt}
\usepackage{todonotes}

\usepackage{tikz}
\usepackage{caption}
\usepackage{xypic}

\newcommand{\demo}{\noindent\textbf{Proof. }} 
\newcommand{\fimdemo}{\hspace*{\fill}\rule{0.3cm}{0.3cm}} 

\newcommand{\R}{\mathbb{R}}

\newcommand{\ep}{\varepsilon}

\newcommand{\al}{\alpha}

\def\noi{\noindent}
\def\ds{\displaystyle}

\newtheorem{theorem}{Theorem}[section]

\newtheorem{definicao}[theorem]{Definition}
\newtheorem{proposition}[theorem]{Proposition}

\newtheorem{lemma}[theorem]{Lemma}

\newtheorem{corolario}[theorem]{Corollary}
\newtheorem{corollary}[theorem]{Corollary}
\newtheorem{remark}[theorem]{Remark}

\def\noi{\noindent}

\def \ds {\displaystyle}

\def\noi{\noindent}
\def\ds{\displaystyle}

\def\sloppy{\tolerance=9999 \hfuzz=.5pt \vfuzz=.5pt}
\usepackage{overpic}
\usepackage{hyperref}
\usepackage{cleveref}  

\newcounter{hypo}
\renewcommand{\thehypo}{H\arabic{hypo}}
\newenvironment{hypothesis}[1][]{
    \refstepcounter{hypo}
    \begin{itemize}
        \item[(\thehypo)] \label{#1}
}{\end{itemize}}

\newcounter{hypoA}
\renewcommand{\thehypoA}{A\arabic{hypoA}}
\newenvironment{assumptionA}[1][]{
  \refstepcounter{hypoA}%
  \begin{itemize}
    \item[(\thehypoA)]\ifx\relax#1\relax\else\label{#1}\fi
}{
  \end{itemize}
}

\numberwithin{equation}{section}
\title
{Attractor Continuity for Semilinear Parabolic Equations on Thin Domains with Degenerating Outward Peaks}
\author[E. A. Tavares--Lima]{E. A. Tavares--Lima}
\author[B. Lorenzi]{B. Lorenzi}
\author[M. C. Pereira]{M. C. Pereira}

\thanks{Departamento de Matem\'atica, Instituto de Matem\'atica e Estat\'istica da Universidade de S\~ao Paulo, Rua do Mat\~ao, 1010 -- CEP 05508-090 -- S\~ao Paulo -- SP -- Brazil.
\newline
E-mail addresses:
\texttt{elaine.tavares@ime.usp.br};
\texttt{bianca.lorenzi@usp.br};
\texttt{marcone@ime.usp.br}.}

\subjclass[2020]{35B41; 35K65; 41A25; 37L05;}

\keywords{global attractors; rate of convergence of attractors; reaction-diffusion equations; thin domain}

\begin{document}

\begin{abstract}
    In this work, we analyze the asymptotic behavior of the attractors associated with a semilinear parabolic equation subject to homogeneous Neumann boundary conditions and defined on a thin domain $R^\varepsilon \subset \mathbb{R}^{1+n}$. We assume that the thin domain exhibits a cusp, known as an outward peak, whose geometry is characterized by a nonnegative function that vanishes at a point on the boundary. Our objective is to rigorously establish the continuity of the attractors as $\varepsilon \to 0$ and to determine their rate of convergence.

\end{abstract}

\maketitle

\pagestyle{plain}

\section{Introduction}

The continuity of attractors under small perturbations of semilinear parabolic equations plays a fundamental role in understanding the qualitative behavior of physical models governed by reaction-diffusion equations. To ensure the robustness of such models, bounded solutions must exhibit permanence and continuity over time, even when initial data, parameters, or the underlying domain undergo changes. The asymptotic behavior of solutions, particularly at infinity, is also of central importance. Classical results have established conditions guaranteeing the existence and continuity of global attractors for semigroups generated by parabolic equations (see for instance \cite{Babin1992, A.N.Carvalho2010, Hale1988,  haleetall}). Moreover, works such as \cite{flank, esperanza, Carvalho2010, Carvalho,leonardo, PPires2024} have refined this theory by providing quantitative estimates for the continuity of the dynamics in one-parameter families of equations converging to a limiting problem. In this context, aspects such as the persistence of equilibrium, local invariant manifolds, and global attractors have been rigorously analyzed, leading to parameter-dependent convergence rates that quantify the stability of well-behaved perturbations.

Furthermore, the nonlinear dynamics of reaction-diffusion equations under domain perturbations has been extensively investigated. From pioneering contributions to more recent studies (see, for instance, \cite{AAB, arrieta, Daners, raugel, Henry_domain, Pereira2007, PPires2024, elaine, NV}), several theories have been developed to address a broad class of perturbed parabolic and elliptic problems. An interesting example in this direction is the thin-domain boundary perturbation problem, which has been carefully analyzed. In this setting, we may mention \cite{esperanza}, where convergence rates for global attractors were established and improved as a positive parameter $\varepsilon \to 0$. See also \cite{ACPS, am2020, mcp}, where the continuity of the dynamics generated by parabolic problems in oscillating thin domains was studied.

A general convergence-rate theory for attractors has emerged (see, for instance, \cite{flank, Carvalho2010, A.N.Carvalho2010, Carvalho}), extending and refining pioneering works and enabling the estimation of convergence for resolvent operators, linear and nonlinear semigroups, hyperbolic equilibria, unstable manifolds, and global attractors through a positive function $\tau(\varepsilon) \to 0$ as $\varepsilon \to 0$. This framework provides a unified approach for analyzing and quantifying the continuity of attractors arising from parameter perturbations in a broad class of nonlinear boundary value problems.

In this work, we build upon previous results and the compact convergence framework developed, for instance, in \cite{german1, Carvalho-Piskarev:06}, to rigorously analyze the convergence of the dynamics generated by a reaction-diffusion equation with homogeneous Neumann boundary conditions, posed on a bounded thin domain $R^\ep \subset \mathbb{R}^{1+n}$ whose geometry is described by a nonnegative function that vanishes at a boundary point. Such open sets will be carefully introduced in the next section (see Section \ref{Dconst}) and are known in the literature as degenerating outward peaks (see, for instance, \cite{KOVARIK20191600, baddomains}). Our goal is to rigorously justify the convergence of the dynamics as $\ep \rightarrow 0$. To the best of our knowledge, this is the first work to address the continuity of attractors for this class of thin domains. 

As will be seen in Section~\ref{PDE}, we consider a semilinear reaction-diffusion equation with a variable diffusion coefficient posed on an outward peak thin domain, subject to homogeneous Neumann boundary conditions. We also assume that the nonlinearities satisfy appropriate dissipative and growth conditions (see hypotheses \eqref{eq:growth} and \eqref{eq:diss}). It is worth noting that our approach advances and refines existing methods concerning the continuity of attractors for parabolic problems posed on thin domains. Moreover, we establish precise estimates describing the convergence from the perturbed problem to the limiting one as the domain of definition of the solutions varies. In this context, our results introduce new models that, in some sense, improve previous contributions, such as \cite{patricia, ACPS,  esperanza,  am2020, raugel, mcp}.

The paper is organized as follows. In Section~\ref{sec:2}, we describe the geometric structure of thin domains with degenerating outward peaks, introduce the rescaled parabolic problem on a fixed reference domain, and present the main assumptions together with the functional analytic framework. Section~\ref{sec:3} is devoted to the well-posedness of the rescaled problem, including the existence, regularity, and uniform boundedness of global solutions, as well as the existence of the associated global attractors. 

In Section~\ref{sec:4}, we investigate the corresponding linear elliptic problems and establish resolvent convergence results, providing sharp estimates and convergence rates as the perturbation parameter tends to zero. In the subsequent sections, namely Sections~\ref{sec:5} and~\ref{sec:6}, we analyze the convergence of the nonlinear semigroups, derive quantitative estimates for the convergence of trajectories, equilibria, and invariant manifolds, and prove the continuity of global attractors, together with explicit rates of convergence for the limiting dynamics. The main result of this work is Theorem~\ref{teoprincipal}, which establishes the continuity of the dynamics generated by the family of problems~\eqref{eq1} (defined by the reaction-diffusion problems \eqref{eq:Pe'} and \eqref{eq:Po}) and provides estimates for the convergence of their global attractors.

\section{Preliminary settings 
}\label{sec:2}

We begin by describing the class of thin domains that define the geometric setting, where the transversal thickness degenerates at one endpoint. 
We also state the main assumptions imposed on the coefficients and nonlinearities involved in the model, which are essential for the well-posedness and asymptotic analysis of the problem.
In addition, we establish some notation and present a number of auxiliary results that will be used throughout the paper. 

\subsection{Domain Construction} \label{Dconst}

Let $a \in C^1([0,1])$ be a non-negative function that determines the domain width transversely to the direction $x \in (0,1)$, satisfying:
\begin{hypothesis}[H1]
$a(0) = 0$ and $a(x) > 0 $ for all $ x \in (0,1]$;
\end{hypothesis}

\begin{hypothesis}[H2]
The function $$W(x) = \int_x^{\frac{1}{2}} \frac{1}{A_{01}(t)a^n(t)}\,dt \quad \text{satisfies} \quad\int_0^1 a^n(x)W^2(x)\,dx < \infty,$$ \noindent where \( A_{01}(\cdot) \in \mathbb{R} \) is a strictly positive smooth function;
\end{hypothesis}
\begin{hypothesis}[H3]
    There exist $0 < x_0 < 1$, $\alpha_1, \alpha_2 \in [1,\infty)$ with $0 \leq \alpha_1 - \alpha_2 < \frac{2}{n}$ and positive constants $K_1$ and $K_2$ such that $K_1 x^{\alpha_1} \leq a(x) \leq K_2 x^{\alpha_2}, \ \text{for } x \in (0,x_0).$
\end{hypothesis}

Now we introduce the family of thin domains that will be the focus of our analysis. For $\ep\in[0,\ep_0]$, let $n \in \mathbb{N}$ and $a : [0,1] \to \mathbb{R}$ be a function that satisfies \eqref{H1}, we define the thin domain depending on $\ep$ by $$R^\ep := \left\{ (x,y) \in \mathbb{R}^{1+n} : x \in (0,1),\ y \in \ep a(x) B_1 \right\},$$ where $B_1 \subset \mathbb{R}^n$ is the unit ball centered at the origin. This domain is a tubular neighborhood around the interval $(0,1) \subset \mathbb{R}$, whose transversal thickness tends to zero as $\ep$ goes to zero, degenerating completely at the end point $x = 0$.

In particular, we denote by $\Omega$ the fixed thin domain defined as
$$\Omega := \left\{ (x,y) \in \mathbb{R}^{1+n} : x \in (0,1),\ y \in a(x) B_1 \right\} = R^1.$$

From this fixed domain $\Omega$, we construct the $\varepsilon$-dependent domain $R^\ep$ by rescaling
$$R^\varepsilon := \left\{ (x, \varepsilon y) : (x, y) \in \Omega \right\}.$$
This change of variables enables us to formulate the problem on a fixed geometry, while the $\varepsilon$-dependence is encoded in the coefficients of the equations. 
\begin{figure}[h!]
\centering
\begin{subfigure}[b]{0.38\textwidth}
  \centering
  \begin{tikzpicture}
    \node[inner sep=0] (img) {\includegraphics[width=\linewidth]{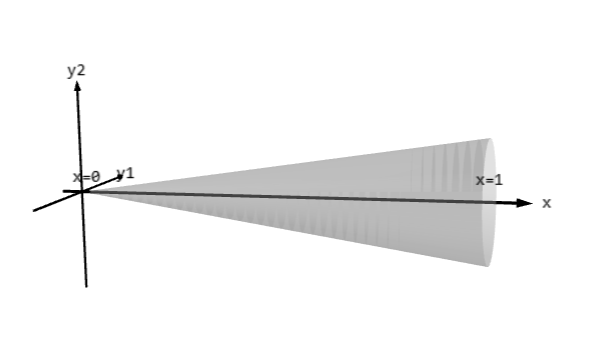}};
    \begin{scope}[x={(img.south east)},y={(img.north west)}]
      \node at (0.35,0.5) {$R^{\varepsilon}$};
    \end{scope}
  \end{tikzpicture}
  \caption{$R^\ep$ with $a(x)=x$ and $n=2$.}
  \label{fig:thin-a-x}
\end{subfigure}
\hfill
\begin{subfigure}[b]{0.38\textwidth}
  \centering
  \begin{tikzpicture}
    \node[inner sep=0] (img) {\includegraphics[width=\linewidth]{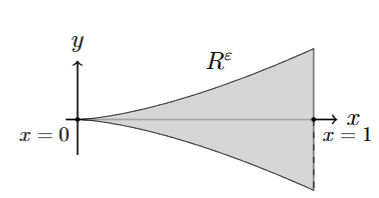}};
  \end{tikzpicture}
  \caption{$R^\ep$ with $a(x)=x^{\frac{3}{2}}$ and $n=1$.}
  \label{fig:thin-a-x32}
\end{subfigure}

\caption{Comparison of thin domains for different profiles $a(x)$.}
\label{fig:two-thin-domains}
\end{figure}

\subsection{Parabolic Problem}  \label{PDE} 

For $\ep \in (0, \ep_0]$, with $\varepsilon_0 < 1$, we consider the following nonlinear parabolic problem in the fixed domain $\Omega$
\begin{equation}\label{eq:Pe'}\tag{$\bf{P}_{\varepsilon}$}
\begin{cases}
    u^\ep_t - \nabla^\varepsilon \cdot \left( A^\varepsilon(x, y) \nabla^\varepsilon u^\varepsilon \right) + u^\varepsilon = f(u^\ep), & \text{in } \Omega, \\[0.3em]
    \big( A^\ep(x,y)\, \nabla^\varepsilon u^\varepsilon \big)\cdot \nu = 0,  & \text{on } \partial \Omega,
\end{cases}
\end{equation}
where the scaled gradient is defined by $$\nabla^\varepsilon u^\varepsilon(x, y) :=
\begin{pmatrix}
\partial_x u^\varepsilon(x, y) \\
\tfrac{1}{\varepsilon} \nabla_y u^\varepsilon(x, y)
\end{pmatrix},$$ and $\nu$ denotes the unit outward normal to $\partial \Omega$.

\medskip

The nonlinear function $f:\mathbb{R} \to \mathbb{R}$ is assumed to belong to $C^2(\mathbb{R})$ and satisfies the following growth condition 
\begin{align}
    |f'(s)| &\leq C(1 + |s|^{\gamma - 1}), \quad \text{for all } s \in \R, \label{eq:growth}
\end{align}
for some $\gamma\geq1$, and the dissipative condition 
\begin{align}
    \exists \;m_f>0 \text{ such that } f(s)\cdot s &\leq 0, \quad \text{for all } |s| \geq m_f. \label{eq:diss}
\end{align}

The family of matrices $A^\ep(x, y) \in \mathbb{R}^{(1+n)\times(1+n)}$ are symmetric, uniformly positive definite matrices, with the asymptotic expansion
$$A^\ep \left(x, y \right) = \sum_{k=0}^{\infty} \ep^k A_k \left(x, y\right),$$
with each term $A_k \in L^\infty\left((0,1) \times \mathbb{R}^n; \mathbb{R}^{(1+n)\times(1+n)}\right).$

We assume that
\begin{assumptionA}[A1]
    The leading order coefficient matrix \( A_0 \left(x, y \right) \in \R \) is symmetric, smooth, and uniformly positive definite. It admits the following block structure
\[
A_0 \left(x, y \right) =
\begin{bmatrix}
A_{01}(x) & 0 \\
0 & A_{03}(x, y)
\end{bmatrix},
\]
where \( A_{03}(x, y) \in \mathbb{R}^{n \times n} \) is a smooth, symmetric, and uniformly positive definite matrix for all \( (x,y) \in (0,1) \times \mathbb{R}^n \). In particular, there exists a constant \( \alpha_0 > 0 \) such that
\[
A_0 \xi \cdot \xi \geq \alpha_0 |\xi|^2, \quad \text{for all } \xi \in \mathbb{R}^{1+n}.
\]
\end{assumptionA}

\begin{assumptionA}[A2]
  Each matrix \( A_k\left(x, y \right) \in \mathbb{R}^{(1+n)\times(1+n)} \) represents the \( \varepsilon^k \)-order correction in the asymptotic expansion of \( A^\varepsilon \). It admits the block structure
\[
A_k\left(x,y \right) =
\begin{bmatrix}
A_{k1}(x) & A_{k2}^T\left(x, y \right) \\
A_{k2}\left(x, y\right) & A_{k3}\left(x, y\right)
\end{bmatrix},
\]
where $A_{k1}$, $A_{k2}$, and $A_{k3}$ contribute respectively to the longitudinal, mixed, and transversal parts of the diffusion operator with effective orders $\varepsilon^k$, $\varepsilon^{k-1}$, and $\varepsilon^{k-2}$ in the weak formulation.
  
  
\end{assumptionA}

\begin{assumptionA}[A3] 
There exists a constant $C_0 > 0$ such that, for $k \in \mathbb{N}$, and $(x,y) \in \Omega$, we have $$\| A_k(x, y) \|_{L^\infty(\Omega)} \le C_0.$$
Consequently, the family of matrices \(A^\varepsilon(x, y)\) satisfies the \emph{uniform ellipticity condition}
\begin{equation}\label{coer}
    A^\varepsilon(x, y)\, \xi \cdot \xi \ge \alpha\, |\xi|^2,
    \quad \forall\, \xi \in \mathbb{R}^{1+n}, \ (x,y)\in\Omega,
\end{equation}
where $\alpha := \alpha_0  - \frac{\varepsilon_0 C_0}{1-\varepsilon_0} > 0$ which remains uniformly positive definite for all $\varepsilon \in (0,\varepsilon_0]$ for some $\varepsilon_0$ sufficiently small.

\end{assumptionA}


We associate to \eqref{eq:Pe'} the $\varepsilon$-dependent Sobolev norm $$\| u \|_{H^1_\varepsilon(\Omega)} :=
\left( \int_\Omega A^\varepsilon(x,y)\nabla^\ep u \cdot \nabla^\ep u + |u|^2 \, dy dx \right)^{1/2}.$$

It is worth observing that the problem \eqref{eq:Pe'} arises naturally from the parabolic problem on the thin domain
\begin{equation}\label{eq:Pe}\tag{$\bf{Q}_{\varepsilon}$}
\begin{cases}
w_t^\varepsilon -\nabla \cdot \big( A^\ep(x, \tfrac{y}{\ep}) \nabla w^\ep \big) + w^\ep = f(w^\ep), & \text{in } R^\varepsilon, \\[0.3em]
\displaystyle \frac{\partial w^\ep}{\partial \nu^\ep} = 0, & \text{on } \partial R^\varepsilon,
\end{cases}
\end{equation}
after performing the change of variables $(x, y) \mapsto (x, \tfrac{y}{\varepsilon})$, which maps the thin domain $R^\varepsilon$ onto the fixed reference domain $\Omega$. In this way, we follow the same approach adopted, for instance, in previous works such as \cite{barros, raugel, mcp}, where parabolic problems on thin domains were considered.

Under the growth condition \eqref{eq:growth}, the nonlinear term in problem \eqref{eq:Pe'} satisfies the standard assumptions ensuring local existence and uniqueness of solutions in $H_\varepsilon^1(\Omega)$. Moreover, by combining this with the dissipative condition \eqref{eq:diss}, we obtain global in time solutions of \eqref{eq:Pe'} which remain uniformly bounded in $H_\varepsilon^1(\Omega)$. Consequently, the semiflow associated with \eqref{eq:Pe'} is well defined and dissipative in $H_\varepsilon^1(\Omega)$ (see, for instance, \cite{cho, Hale1988, henry}).

These properties allow us to expect the existence of a global attractor
$\mathcal{A}_\varepsilon \subset H_\varepsilon^1(\Omega)$ for problem
\eqref{eq:Pe'}
. In particular, there should exist a constant $C>0$, independent of time and
of the parameter $\varepsilon$, such that
\begin{equation}\label{limPe}
    \sup_{u^{\varepsilon}\in \mathcal{A}_{\varepsilon}}
    \|u^{\varepsilon}\|_{H_\varepsilon^1(\Omega)} \le C.
\end{equation}
Moreover, exploiting the geometric condition \eqref{H3} on the degeneracy
of the domain, we will prove that the global attractor $\mathcal{A}_\varepsilon$
is in fact uniformly bounded in $L^\infty(\Omega)$.

\medskip

In order to introduce the limit equation corresponding to the asymptotic regime $\varepsilon \to 0$,
we begin by defining the weighted function spaces that provide the natural functional framework
for the limiting problem.

The weighted Lebesgue space is defined by
\[
L^2_a(0,1) := \left\{ u:(0,1)\to\mathbb{R}\ \text{measurable}:\ \int_0^1 a^n(x)\,|u(x)|^2\,dx<\infty \right\},
\]
endowed with the norm $\|u\|_{L^2_a(0,1)} := \left(\int_0^1 a^n(x)\,|u(x)|^2\,dx\right)^{1/2}.$

Similarly, we set
\[
H^1_a(0,1) := \left\{ u\in L^2_a(0,1):\ u_x\in L^2_a(0,1)\right\}.
\]

As $\varepsilon\to0$, the thin domain collapses onto the interval $(0,1)$ and the corresponding
limit reaction--diffusion equation reads
\begin{equation}\tag{$\bf{P}_{0}$}\label{eq:Po}
\begin{cases}
u^0_t-\dfrac{1}{a^n}\left(A_{01} a^n u^0_x\right)_x + u^0 =  f(u^0), & x \in (0,1), \\
\lim\limits_{x \to 0^+} a^n(x) u^0_x(x) = 0, & \\
u^0_x(1) = 0.&
\end{cases}
\end{equation}

The limit problem \eqref{eq:Po} generates a dynamical system possessing a global
attractor $\mathcal{A}_0 \subset H^1_a(0,1)$.
Moreover, by one-dimensional Sobolev embeddings and standard parabolic
regularity, solutions on the attractor are uniformly bounded in $L^\infty(0,1)$,
so that
\begin{equation}\label{limPo}
\sup_{u^{0}\in \mathcal{A}_{0}}\|u^{0}\|_{L^{\infty}(0,1)} < \infty.
\end{equation}
In addition, the system admits a Lyapunov functional given by the associated
energy, and therefore the dynamical system generated by \eqref{eq:Po} has a
gradient structure (see \cite{Hale1988}), implying that its attractor consists
of equilibria and the heteroclinic connections between them. Naturally, $\mathcal{A}_0$ can be embedded into $H^1_\varepsilon(\Omega)$ by extending each function
$u^0(x)$ to $\Omega$ through the trivial extension $\tilde{u}^0(x,y)=u^0(x)$ for all $(x,y)\in\Omega$.

\medskip

We define the extension operator associated with the thin domain \( R^\varepsilon \), which maps functions defined on \( (0,1) \) into functions defined on \( R^\varepsilon \), by
\begin{align*}
E_\varepsilon : L^2_a(0,1) &\longrightarrow L^2(R^\varepsilon), \\
u &\longmapsto E_\varepsilon u(x,y) := u(x).
\end{align*}
This operator extends the function \( u(x) \) constantly along the transversal section \( y \in \varepsilon a(x) B_1 \).


We also define the averaging operator associated with the thin domain \( R^\varepsilon \), which maps functions defined on \( R^\varepsilon \) into functions of \( x \in (0,1) \), by
\begin{align*}
M_\varepsilon : L^2(R^\varepsilon) &\longrightarrow L^2_a(0,1), \\
u &\longmapsto M_\varepsilon u(x) := \frac{1}{\varepsilon^n a^n(x) |B_1|} \int_{\varepsilon a(x) B_1} u(x,y)\, dy.
\end{align*}
This operator computes the average of \( u \) in the transverse section \( \varepsilon a(x) B_1 \).

Similarly, we define the corresponding operators in the fixed domain $\Omega$. The extension operator is given by
\begin{align*}
E : L^2_a(0,1) &\longrightarrow L^2(\Omega), \\
u &\longmapsto E u(x,y) := u(x),
\end{align*}
and the averaging operator is defined as
\begin{align*}
M : L^2(\Omega) &\longrightarrow L^2_a(0,1), \\
u &\longmapsto M u(x) := \frac{1}{a^n(x)|B_1|} \int_{a(x) B_1} u(x,y)\, dy.
\end{align*}

We include the following auxiliary results on the operators $E_\ep, E$ and $M_{\ep}$, which will be instrumental in several estimates presented later. For this purpose, we introduce the coordinate transformation
\begin{align*}
l : (0,1) \times B_1 &\longrightarrow \Omega, \\
(x,z) &\longmapsto l(x,z) := (x, a(x)z).
\end{align*}
For each fixed \( x \in (0,1) \), we define
\[
l_x : B_1 \to \Omega, \quad l_x(z) := l(x, a(x)z).
\]
Note that each map \( l_x \) is a $C^1$-diffeomorphism in its image.

\begin{lemma}\label{teo21}
If a satisfies the condition \eqref{H1} and \( u \in H^1_\varepsilon(\Omega) \), then \( M_\varepsilon u \in H^1_a(0,1) \). Moreover, the derivative of \( M_\varepsilon u \) satisfies the identity
\[
(M_\varepsilon u)_x(x) = \frac{1}{|B_1|} \left( \int_{B_1} u_x(x, \varepsilon l_x(z))\,dz + \int_{B_1} \nabla_y u(x, \varepsilon l_x(z)) \cdot \varepsilon a'(x) z\, dz \right).
\]
\end{lemma}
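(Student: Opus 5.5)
The plan is to reduce everything to a fixed reference cross-section via the map $l$, differentiate under the integral sign, and then pass to general $u$ by density. The statement is slightly ambiguous, since $M_\ep$ was defined on $L^2(R^\ep)$ while $u\in H^1_\ep(\Omega)$. We read it as $M_\ep$ acting on the function $u$ viewed on $R^\ep$, after the change of variables, with $l_x(z)$ meaning $a(x)z$. This is the reading consistent with the formula. Substituting $y=\ep a(x)z$, $dy=\ep^n a^n(x)\,dz$, in the definition gives
\[
M_\ep u(x)=\frac{1}{|B_1|}\int_{B_1} u(x,\ep a(x) z)\,dz ,
\]
which is now an average over the fixed set $B_1$. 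The Jacobian factor $\ep^n a^n$ cancels exactly against the normalization, so no weight survives.

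\textbf{Smooth case.} First I would take $u$ smooth up to the boundary on the closure of the relevant domain, for instance $u\in C^1$. For fixed $x\in(0,1]$ the integrand is $C^1$ in $x$ because $a\in C^1$. Differentiating under the integral sign (dominated convergence on the compact set $B_1$, locally in $x\in(0,1)$) together with the chain rule gives
\[
\partial_x\big[u(x,\ep a(x)z)\big]=u_x(x,\ep a(x)z)+\nabla_y u(x,\ep a(x)z)\cdot \ep a'(x)z .
\]
This is exactly the claimed identity. Membership in $H^1_a(0,1)$ then follows from Jensen/Cauchy--Schwarz on $B_1$ and the reverse change of variables:
\[
\int_0^1 a^n|M_\ep u|^2\,dx\le \frac{C}{\ep^n}\|u\|^2_{L^2(R^\ep)} .
\]
For the derivative, the first term is bounded the same way. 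The second term requires $|z|\le1$ and $|a'|\le \|a'\|_\infty$, and it is bounded by
\[
C\ep\int a^n |\nabla_y u|^2\,dz\,dx ,
\]
which transforms into $C\ep^{2-n}\|\tfrac1\ep\nabla_y u\|^2$ and is therefore controlled by the $H^1_\ep$ norm. All constants depend on $\ep$, but that is harmless here.

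\textbf{General case.} I would extend to arbitrary $u\in H^1_\ep(\Omega)$ by density of smooth functions. This is where the main obstacle lies: $\Omega$ has a cusp at $x=0$ and is not Lipschitz there, so density of $C^1(\overline\Omega)$ is not immediate. I would avoid needing it globally. Since weak derivatives are local, it suffices to verify the identity on each interval $(\delta,1)$. Over $x\in(\delta,1)$ the domain is Lipschitz, because $a\ge c_\delta>0$ and $a$ is $C^1$. Hence smooth functions are dense in $H^1$ of that piece, and the linear map $u\mapsto M_\ep u$ is continuous from $H^1$ into $H^1(\delta,1)$ by the bounds above. So the identity passes to the limit in $L^2(\delta,1)$ and weak derivatives agree on every $(\delta,1)$, hence on $(0,1)$. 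Finally, the weighted bounds above hold with constants independent of $\delta$, so monotone convergence as $\delta\to0$ yields $M_\ep u,(M_\ep u)_x\in L^2_a(0,1)$, i.e.\ $M_\ep u\in H^1_a(0,1)$.
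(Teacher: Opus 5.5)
Your proof is correct. The paper gives no argument of its own here; it simply cites Proposition 4.1 of Arrieta--Santamar\'{\i}a \cite{esperanza}, a result proved for thin domains without a degenerating peak. In that setting smooth functions are dense in $H^1$ of the whole domain, so the change-of-variables, chain-rule and density argument goes through directly.

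Your proof uses the same mechanism:
\begin{itemize}
\item reduce to the fixed section $B_1$, where the Jacobian $\varepsilon^n a^n$ cancels the normalization exactly;
\item differentiate under the integral for smooth $u$;
\item pass to the limit by density.
\end{itemize}

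It also supplies the step the citation does not cover in the present setting. Since $R^\varepsilon$ has a cusp at $x=0$ and is not Lipschitz there, you work on $\{\delta<x<1\}$, where the domain is Lipschitz and $a^n\ge c_\delta^n>0$. Locality of weak derivatives then gives the identity on all of $(0,1)$. Finally, the $\delta$-independent weighted bounds, which need only $a'\in L^\infty$ (i.e.\ $a\in C^1([0,1])$), give $M_\varepsilon u\in H^1_a(0,1)$. These bounds in fact hold pointwise a.e.\ for every $u\in H^1$, so monotone convergence is not even needed. Your version is therefore self-contained and genuinely covers the peak.

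Your reading $l_x(z)=a(x)z$, with $u$ viewed on $R^\varepsilon$, is the one the paper itself uses later in the proof of Theorem~\ref{Theo23}.

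One small slip: in the bound for the second term, squaring produces a factor $\varepsilon^2$, not $\varepsilon$. Undoing the change of variables then gives $C\varepsilon^{2-n}\|\nabla_y u\|^2_{L^2(R^\varepsilon)}$. Since $\varepsilon$ is fixed in this lemma, this does not affect the conclusion.
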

\demo
See \cite[Proposition 4.1]{esperanza}.
\fimdemo
\begin{lemma}\label{lemma22}
If a satisfies the condition \eqref{H1}, then there exists \(\gamma > 0\) such that
\begin{align*}
\|u^\ep - EM u^\ep\|^2_{L^2(\Omega)} &\leq \gamma \|\nabla_y u^\ep\|^2_{L^2(\Omega)}, \quad \forall\ u^\ep \in H^1_\ep(\Omega), \\
\|w^\ep - E_\ep M_\ep w^\ep\|^2_{L^2(R^\ep)} &\leq \gamma\, \ep^2 \|\nabla_y w^\ep\|^2_{L^2(R^\ep)}, \quad \forall\ w^\ep \in H^1(R^\ep).
\end{align*}
\end{lemma}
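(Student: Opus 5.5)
The plan is to reduce both inequalities to a Poincaré--Wirtinger inequality on each transversal cross-section, then integrate in $x$. For fixed $x\in(0,1)$ the section of $\Omega$ is the ball $a(x)B_1$, and $Mu^\ep(x)$ is exactly the mean of $u^\ep(x,\cdot)$ over that ball. For $u^\ep\in H^1_\ep(\Omega)$, Fubini gives $u^\ep(x,\cdot)\in H^1(a(x)B_1)$ for a.e. $x$; the $y$-gradient is controlled since the $H^1_\ep$ norm bounds $\ep^{-1}\nabla_y u^\ep$ in $L^2$.

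\textbf{Step 1 (Poincaré on the unit ball, rescaled).} The Poincaré--Wirtinger inequality on $B_1$ gives
\[
\int_{B_1}|v-\bar v|^2\,dz\le C_P\int_{B_1}|\nabla_z v|^2\,dz,
\]
with $C_P$ depending only on $n$. Rescale by $y=rz$ with $r=a(x)>0$, which is allowed by \eqref{H1} for $x\in(0,1]$. Then for $w\in H^1(rB_1)$ with mean $\bar w$,
\[
\int_{rB_1}|w-\bar w|^2\,dy\le C_P\, r^2\int_{rB_1}|\nabla_y w|^2\,dy .
\]
Note that the Jacobian factor $r^n$ appears on both sides and cancels.

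\textbf{Step 2 (integrate in $x$).} Apply Step 1 with $r=a(x)$ and $w=u^\ep(x,\cdot)$, then integrate over $(0,1)$. This yields
\[
\|u^\ep-EMu^\ep\|^2_{L^2(\Omega)}\le C_P\|a\|^2_{\infty}\,\|\nabla_y u^\ep\|^2_{L^2(\Omega)}.
\]
Here $\|a\|_\infty<\infty$ because $a\in C^1([0,1])$. For the thin domain we take $r=\ep a(x)$, whose section is $\ep a(x)B_1$ and whose mean is $M_\ep w^\ep(x)$. This gives the factor $C_P\ep^2\|a\|_\infty^2$. So $\gamma:=C_P\|a\|_\infty^2$ works for both inequalities. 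Alternatively, the second inequality follows from the first via the change of variables $y\mapsto y/\ep$: it multiplies both $L^2$ norms by $\ep^n$ and turns $\nabla_y$ on $R^\ep$ into $\ep^{-1}\nabla_y$ on $\Omega$.

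\textbf{Main obstacle.} The only delicate point is justifying the slicing, i.e. that for a.e. $x$ the restriction $u^\ep(x,\cdot)$ lies in $H^1(a(x)B_1)$ with weak gradient equal to the restriction of $\nabla_y u^\ep$, and that $x\mapsto Mu^\ep(x)$ is measurable. The degeneration $a(0)=0$ causes no trouble. The set $\{x=0\}$ has measure zero, and on every compact $[\delta,1]$ the domain is Lipschitz, so the slicing is the standard one for Sobolev functions on product-like domains, e.g. through $l$ and Fubini on $(0,1)\times B_1$. Monotone convergence as $\delta\to0$ then removes the cutoff. Crucially, the constant is uniform because the Poincaré constant scales exactly as $r^2$, and $r^2\le\|a\|_\infty^2$ (resp. $\ep^2\|a\|^2_\infty$) stays bounded even though $r\to0$.
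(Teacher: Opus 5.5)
Your proof is correct and is the standard argument behind the result the paper simply cites (\cite[Proposition 2.4]{esperanza}): Poincar\'e--Wirtinger on each cross-section, integrated in $x$, with the thin-domain version following from the scaling $y\mapsto y/\ep$. Your observation that the constant on the section $a(x)B_1$ is $C_P\,a(x)^2\le C_P\|a\|_\infty^2$ is exactly what justifies using that result here despite the peak $a(0)=0$, because the Poincar\'e constant improves, rather than degenerates, as the sections shrink.
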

\demo
See \cite[Proposition 2.4]{esperanza}.
\fimdemo

\

The evolution problems \eqref{eq:Pe'} and \eqref{eq:Po} admit an abstract operator formulation that will be useful for analysis. Let $L_\varepsilon : D(L_\varepsilon) \subset L^2(\Omega) \to L^2(\Omega)$ be defined by
\begin{equation} \label{Leps}
L_\varepsilon u = -\nabla^\varepsilon \cdot \left(A^\varepsilon(x, y) \nabla^\varepsilon u \right) + u,
\end{equation}
with 
$ D(L_\varepsilon) = \left\{ u \in H^2(\Omega) : \big( A^\ep(x,y)\, \nabla^\varepsilon u^\varepsilon \big)\cdot \nu = 0 \text{ on } \partial \Omega \right\}.$

The limiting operator $L_0 : D(L_0) \subset L^2_{a}(0,1) \to L^2_{a}(0,1)$ is given by
\begin{equation*}
L_0 u = -\frac{1}{a^n(x)} \left( A_{01}(x)\, a^n(x)\, u_x \right)_x + u,
\end{equation*}
with $D(L_0) = \left\{ u \in H^1_{a}(0,1) \; : \; A_{01} \, a^n \, u_x\in H^1_a(0,1), \; \lim\limits_{x \to 0^+} a^n(x)\, u_x(x) = 0,\; u_x(1) = 0 \right\}.$

Observe that $L_\varepsilon$ and $L_0$ are selfadjoint, positive linear operators with compact resolvent, sectorial (see \cite{cho} and Theorem \ref{baraocp}) and are defined on separable Hilbert spaces. For $\varepsilon\in (0, \varepsilon_0]$, let $X_{\varepsilon}=L^2(\Omega)$ with the usual norm and $X_0 = L^2_a(0,1)$ with the previously defined weighted norm.
 Hence, we can define the fractional powers $L_\varepsilon^\alpha$, $\al\in(0,1]$ and $\ep\in [0,\varepsilon_0]$, of the operator $L_\varepsilon$ whose domain is denoted by $X_\varepsilon^\alpha$ (see \cite{cho}). The values of interest for $\alpha$ in this paper are $\alpha = 1, \alpha = 0$ and $\alpha = 1/2.$ In these three cases, the characterization of the respective domains is classic (see \cite{cho}). We have $X_{\varepsilon}^1= D(L_\varepsilon)$ with the graph norm and similarly for $X_0^1=D(L_0)$, $X_{\varepsilon}^{\frac{1}{2}} = H_\ep^1(\Omega)$ and $X_{0}^{\frac{1}{2}} = H_a^1(0,1)$.  

Then the problems \eqref{eq:Pe'} and \eqref{eq:Po} can be written as a family of evolution problems, for $\ep \in [0,\ep_0]$,
\begin{align}\label{eq1}
\begin{cases}
u_t^{\varepsilon} + L_\varepsilon u^{\varepsilon}  = F_\ep(u^{\varepsilon}), \ t > 0 \\ \displaystyle u^{\varepsilon}(0)  = u^{\varepsilon}_0 \in X_{\varepsilon}^{\al},
\end{cases} 
\end{align}
where $F_\varepsilon : X^\alpha_\varepsilon \to X_\ep$ are the Nemitskii's operators associated with the nonlinearity $f$, defined by $F_\varepsilon(u) = f(u), \ \text{for all} \ u \in X^\alpha_\varepsilon.$


\section{Well-posedness and existence of a global attractor}\label{sec:3}




In order to rigorously justify the results stated in the previous section,
we now establish the well-posedness of the rescaled problem \eqref{eq:Pe'}
and prove the existence of its global attractor.
To this end, we reformulate \eqref{eq:Pe'} as an abstract evolution equation
associated with the operator $L_\varepsilon$ acting on $L^2(\Omega)$.


Under hypothesis \eqref{H3}, the function $a(\cdot)$ 
behaves near $x=0$ like $x^{\alpha_i}$, with exponents \(\alpha_1,\alpha_2\) satisfying \(0 \le \alpha_1 - \alpha_2 < 2/n\). In this case, the domain $\Omega$ is locally equivalent to a cuspidal domain in the sense of Maz'ya and Poborchi \cite{baddomains}. 
By the Sobolev embedding theorem for cusp domains (see Theorem 1.1 in \cite{baddomains}), there exists an exponent $q = q(n,\alpha_1,\alpha_2) > 2$ such that the embedding $H^1(\Omega) \hookrightarrow L^p(\Omega)$ is continuous for all $2 \le p \le q$, and compact whenever $p < q$. This result extends the classical Sobolev embedding to geometries with degenerating cross-sections, showing that the loss of regularity in the boundary only reduces the critical exponent $q$, but does not destroy the embedding property.

Moreover, since $\| u \|_{H^1(\Omega)} \leq \|u\|_{H_\varepsilon^1(\Omega)}$ for any $\ep \in (0,\ep_0]$, $H_\varepsilon^1(\Omega) \hookrightarrow L^p(\Omega)$, for $ 2 \le p \le q,$ and they are compact for $p < q$. 
In particular, one can choose $p$ such that $2\gamma \le p < q$, which is possible under \eqref{H3}. Then there exists a constant $C_S > 0$, independent of $\ep$, such that 
\begin{equation}\label{eq:emb-2gamma}
    \|u\|_{L^{2\gamma}(\Omega)} \le C_S\,\|u\|_{H_\varepsilon^1(\Omega)}.
\end{equation}


\begin{lemma}\label{lema:nemytskii}
Let $f$ satisfy the growth condition \eqref{eq:growth}
for some $\gamma\ge1$, and assume hypothesis \eqref{H3}.
Then 
the associated Nemitskii's operator
$F_\ep :X^{1/2}_\varepsilon\to X_\varepsilon$, is locally Lipschitz with
Lipschitz constant uniform with respect to $\varepsilon$.
More precisely, for every $R>0$ there exists a constant $L_F>0$,
independent of $\varepsilon$, such that
\[
  \|F_\ep(u^\varepsilon)-F_\ep(v^\varepsilon)\|_{X_\varepsilon}
  \le L_F\,\|u^\varepsilon-v^\varepsilon\|_{X^{1/2}_\varepsilon}
\]
for all $u^\varepsilon,v^\varepsilon\in B_R$, where $B_R:=\left\{w^\varepsilon\in X^{1/2}_\varepsilon:
\|w^\varepsilon\|_{X^{1/2}_\varepsilon}\le R\right\}.$
In addition, there exists a constant $C_F>0$, independent of $\varepsilon$,
such that
\[
  \|F_\ep(u^\varepsilon)\|_{X_\varepsilon}\le C_F,
  \ \ \text{for all } u^\varepsilon\in B_R.
\]
\end{lemma}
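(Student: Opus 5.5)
We argue pointwise through the mean value theorem and then integrate, using Hölder's inequality together with the uniform embedding \eqref{eq:emb-2gamma}. For $u^\varepsilon,v^\varepsilon\in B_R$ and a.e. $(x,y)\in\Omega$ we write
\[
f(u^\varepsilon)-f(v^\varepsilon)=\int_0^1 f'\big(\theta u^\varepsilon+(1-\theta)v^\varepsilon\big)\,d\theta\,(u^\varepsilon-v^\varepsilon),
\]
so that, by \eqref{eq:growth},
\[
|f(u^\varepsilon)-f(v^\varepsilon)|\le C\big(1+|u^\varepsilon|^{\gamma-1}+|v^\varepsilon|^{\gamma-1}\big)|u^\varepsilon-v^\varepsilon|.
\]
We square this bound and integrate over $\Omega$. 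The constant term gives $C\|u^\varepsilon-v^\varepsilon\|_{L^2(\Omega)}^2\le C\|u^\varepsilon-v^\varepsilon\|_{X^{1/2}_\varepsilon}^2$, since $\|\cdot\|_{L^2}\le\|\cdot\|_{H^1_\varepsilon}$ by definition of the norm.

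For the term with power $\gamma-1$ (only relevant when $\gamma>1$), we apply Hölder's inequality with exponents $\frac{\gamma}{\gamma-1}$ and $\gamma$:
\[
\int_\Omega |u^\varepsilon|^{2(\gamma-1)}|u^\varepsilon-v^\varepsilon|^2\le \|u^\varepsilon\|_{L^{2\gamma}(\Omega)}^{2(\gamma-1)}\,\|u^\varepsilon-v^\varepsilon\|_{L^{2\gamma}(\Omega)}^{2}.
\]
By \eqref{eq:emb-2gamma}, whose constant $C_S$ is independent of $\varepsilon$ because $\|u\|_{H^1(\Omega)}\le\|u\|_{H^1_\varepsilon(\Omega)}$ for $\varepsilon\le\varepsilon_0<1$, this is bounded by $C_S^{2\gamma}R^{2(\gamma-1)}\|u^\varepsilon-v^\varepsilon\|_{X^{1/2}_\varepsilon}^2$. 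The same bound holds with $v^\varepsilon$ in place of $u^\varepsilon$. Collecting the terms gives
\[
L_F=C\big(1+2C_S^{\gamma}R^{\gamma-1}\big),
\]
up to a harmless constant, and $L_F$ depends only on $R$, $\gamma$, $C$ and $C_S$. In particular it does not depend on $\varepsilon$.

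For the uniform bound we use $|f(s)|\le |f(0)|+C(|s|+|s|^\gamma)$, which follows from integrating \eqref{eq:growth}. Then
\[
\|f(u^\varepsilon)\|_{L^2(\Omega)}\le |f(0)||\Omega|^{1/2}+C\|u^\varepsilon\|_{L^2(\Omega)}+C\|u^\varepsilon\|_{L^{2\gamma}(\Omega)}^\gamma\le C_F,
\]
where again $\|u^\varepsilon\|_{L^{2\gamma}}\le C_SR$ and $|\Omega|<\infty$. Alternatively, $C_F$ can be obtained directly from the Lipschitz estimate with $v^\varepsilon=0$.

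The only genuinely delicate point is the availability of the embedding into $L^{2\gamma}(\Omega)$ uniformly in $\varepsilon$ on the cusp domain. This is where \eqref{H3} and the Maz'ya–Poborchi embedding enter, through the requirement $2\gamma<q(n,\alpha_1,\alpha_2)$. We take that requirement as encoded in \eqref{eq:emb-2gamma}, that is, as an implicit restriction on $\gamma$. Everything else is routine.
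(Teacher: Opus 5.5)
Your proposal is correct and follows the paper's proof essentially step for step: the mean value theorem with \eqref{eq:growth}, H\"older with exponents $\gamma$ and $\gamma/(\gamma-1)$, the uniform embedding \eqref{eq:emb-2gamma}, and the integrated bound $|f(s)|\le C(1+|s|^\gamma)$ for $C_F$. Your only deviation is cosmetic (you bound the constant term directly in $L^2$ instead of through $L^{2\gamma}$ and $|\Omega|<\infty$). You are right to flag $2\gamma<q$ as an implicit restriction on $\gamma$, which the paper glosses over; likewise, $\|u\|_{H^1(\Omega)}\le\|u\|_{H^1_\varepsilon(\Omega)}$ really holds only up to the harmless, $\varepsilon$-independent factor $\min\{\alpha,1\}^{-1/2}$.
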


\begin{proof}
We first prove the local Lipschitz continuity of $F_\ep$.
By the mean value theorem and the growth condition \eqref{eq:growth},
for any $a,b\in\mathbb{R}$ there exists $\theta\in(0,1)$ such that
\[
|f(a)-f(b)|
= |f'(\theta a+(1-\theta)b)|\,|a-b|
\le \vartheta_1\bigl(1+|a|^{\gamma-1}+|b|^{\gamma-1}\bigr)\,|a-b|.
\]
Setting $a=u^\varepsilon(x)$ and $b=v^\varepsilon(x)$ and squaring, we obtain
\[
|F_\ep(u^\varepsilon)-F_\ep(v^\varepsilon)|^2
\le C\bigl(1+|u^\varepsilon|^{2(\gamma-1)}+|v^\varepsilon|^{2(\gamma-1)}\bigr)
      |u^\varepsilon-v^\varepsilon|^2,
\]
where $C>0$ depends only on $\vartheta_1$ and $\gamma$.
Integrating over $\Omega$ and applying Hölder’s inequality with conjugate
exponents $\gamma$ and $r=\frac{\gamma}{\gamma-1}$ (the case $\gamma=1$
being straightforward), we find
\begin{align*}
\|F_\ep(u^\varepsilon)-F_\ep(v^\varepsilon)\|_{L^2(\Omega)}^2
&\le C\,\|u^\varepsilon-v^\varepsilon\|_{L^{2\gamma}(\Omega)}^2 
\Bigl(1+\|u^\varepsilon\|_{L^{2\gamma}(\Omega)}^{2(\gamma-1)}
        +\|v^\varepsilon\|_{L^{2\gamma}(\Omega)}^{2(\gamma-1)}\Bigr).
\end{align*}

By the uniform embedding \eqref{eq:emb-2gamma}, for $u^\varepsilon,v^\varepsilon\in B_R$ we have
\[
\|u^\varepsilon-v^\varepsilon\|_{L^{2\gamma}(\Omega)}
\le C_S\|u^\varepsilon-v^\varepsilon\|_{X^{1/2}_\varepsilon},
\qquad
\|u^\varepsilon\|_{L^{2\gamma}(\Omega)},
\|v^\varepsilon\|_{L^{2\gamma}(\Omega)}\le C_S R.
\]
Therefore,
\[
\|F_\ep(u^\varepsilon)-F_\ep(v^\varepsilon)\|_{X_\varepsilon}
\le L_F(R)\,\|u^\varepsilon-v^\varepsilon\|_{X^{1/2}_\varepsilon},
\]
with, $L_F(R)=C\,C_S\bigl(1+(C_S R)^{2(\gamma-1)}\bigr)^{1/2}$ (which is independent of $\varepsilon$). This proves the local Lipschitz continuity.

We now establish the growth estimate on bounded sets.
From the growth condition \eqref{eq:growth} on $f'$, by Newton–Leibniz we obtain 
\begin{equation}\label{eq:poly-f}
|f(s)| \le |f(0)| + \int_0^{|s|} C\bigl(1+\xi^{\gamma-1}\bigr)\,d\xi
\le C_1\bigl(1+|s|^\gamma\bigr), \quad s\in\mathbb{R},
\end{equation}
for some constant $C_1>0$. Squaring \eqref{eq:poly-f} and integrating over $\Omega$ yields
\[
\|F_\varepsilon(u)\|_{L^2(\Omega)}^2
= \int_\Omega |f(u(x))|^2\,dx
\le C_2\int_\Omega \bigl(1+|u(x)|^{2\gamma}\bigr)\,dx
= C_2\bigl(|\Omega|+\|u\|_{L^{2\gamma}(\Omega)}^{2\gamma}\bigr).
\]
Taking square roots and using \eqref{eq:emb-2gamma}, 
\[
\|F_\varepsilon(u)\|_{L^2(\Omega)}
\le C_3\Bigl(1+\|u\|_{L^{2\gamma}(\Omega)}^\gamma\Bigr)
\le C_3\Bigl(1+(C\,\|u\|_{H^1_\varepsilon(\Omega)})^\gamma\Bigr)
\le C\bigl(1+\|u\|_{H^1_\varepsilon(\Omega)}^\gamma\bigr),
\]
where all constants are independent of $\varepsilon$.
In particular, if $u_\varepsilon\in B_R$, then
\[
\|F_\ep(u_\varepsilon)\|_{X_\varepsilon}\le C_F(R),
\]
with $C_F(R)>0$ independent of $\varepsilon$.
The proof is complete.
\end{proof}

\begin{remark} Below we state a basic well-posedness result adapted from \cite[Section 2.3]{cho}. For this, we fix $\al \in (0,\frac{1}{2}]$. Since the embedding $X^{1/2}_{\varepsilon} \hookrightarrow X^{\alpha}_{\varepsilon}$ is continuous, an analogous result as Lemma \ref{lema:nemytskii} substituting respectively $X^\frac12$ and $X$ with $X^\alpha_\varepsilon$ and $X_\varepsilon$. In particular, the boundedness of $u^\varepsilon$ in $X^\al_\ep$ 
ensures boundedness of $F_\ep(u^\varepsilon)$ in $X_\ep$. 
\end{remark}

\begin{theorem}[Local well-posedness]\label{thm:local-eps}
Under the hypotheses on $L_\ep$ and $f$, for each $u_0^\varepsilon \in X^\al_\ep$, with $0 \leq \alpha < \tfrac{1}{2}$, there exists a unique local solution $u^\varepsilon = u^\varepsilon(t,u_0^\varepsilon)$ of \eqref{eq1}, defined on its maximal interval of existence $[0,\tau_{u_0^\varepsilon})$, such that either $\tau_{u_0^\varepsilon} = +\infty$ or 
$$\limsup\limits_{t\to\tau_{u_0^\varepsilon}^-} 
\|u^\varepsilon(t,u_0^\varepsilon)\|_{X^\al_\ep} = \infty.$$
The local solution is classical and depends continuously on the initial data in the sense that the map $u_0^\varepsilon \mapsto u^\varepsilon(\cdot,u_0^\varepsilon)$ is continuous from $X_\ep^\al$ to $C([0,\tau_0],X^\al_\ep)$, for some $0<\tau_0\le\tau_{u_0^\varepsilon}$.
\end{theorem}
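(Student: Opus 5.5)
The plan is to apply the abstract local existence theory for semilinear equations with sectorial linear part, in the form of \cite[Section 2.3]{cho} (equivalently Henry's theory \cite{henry}). Three ingredients are needed: $L_\varepsilon$ is sectorial on $X_\varepsilon$, $F_\varepsilon$ is locally Lipschitz from $X^\alpha_\varepsilon$ to $X_\varepsilon$, and $F_\varepsilon$ maps bounded sets into bounded sets.

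\textbf{Step 1: sectoriality.} $L_\varepsilon$ is selfadjoint and positive with compact resolvent on the Hilbert space $X_\varepsilon=L^2(\Omega)$, as noted after \eqref{Leps}. Coercivity follows from \eqref{coer}:
\[
\langle L_\varepsilon u,u\rangle \ge \min\{\alpha,1\}\,\|u\|^2_{H^1_\varepsilon(\Omega)}\ge \min\{\alpha,1\}\,\|u\|^2_{L^2(\Omega)} .
\]
A selfadjoint operator bounded below by a positive constant is sectorial, and $-L_\varepsilon$ generates an analytic semigroup with
\[
\|e^{-L_\varepsilon t}\|\le e^{-\min\{\alpha,1\}t},\qquad \|L_\varepsilon^\alpha e^{-L_\varepsilon t}\|\le C_\alpha t^{-\alpha}e^{-ct}.
\]
The constants do not depend on $\varepsilon$, since they come from the spectral theorem and the uniform lower bound. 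The fractional power spaces $X^\alpha_\varepsilon$ are therefore well defined.

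\textbf{Step 2: the nonlinearity.} By the Remark preceding the statement, Lemma \ref{lema:nemytskii} transfers to $X^\alpha_\varepsilon$: $F_\varepsilon$ is locally Lipschitz $X^\alpha_\varepsilon\to X_\varepsilon$ and bounded on bounded sets. This is the step I expect to be the main obstacle. For $\alpha<\frac12$ the inclusion goes the wrong way, $X^{1/2}_\varepsilon\hookrightarrow X^\alpha_\varepsilon$, so control in $X^\alpha_\varepsilon$ does not directly give control in $L^{2\gamma}(\Omega)$. I would handle it by interpolating the cusp Sobolev embedding. By complex interpolation between $X_\varepsilon=L^2$ and $X^{1/2}_\varepsilon\hookrightarrow L^{q}$,
\[
X^\alpha_\varepsilon\hookrightarrow L^{p_\alpha}(\Omega),\qquad \frac1{p_\alpha}=\frac{1-2\alpha}{2}+\frac{2\alpha}{q}.
\]
Hölder's inequality as in the proof of Lemma \ref{lema:nemytskii} then gives the Lipschitz estimate, provided $2\gamma\le p_\alpha$. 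So the result requires either $\alpha$ close enough to $\frac12$ or a restriction on $\gamma$. I would state this compatibility condition explicitly, taking the statement's "hypotheses on $f$" to include it; in the case $\gamma=1$ the estimate is immediate for every $\alpha\ge0$. The constants stay uniform in $\varepsilon$ because $\|u\|_{H^1(\Omega)}\le\|u\|_{H^1_\varepsilon(\Omega)}$.

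\textbf{Step 3: fixed point.} Fix $u_0^\varepsilon\in X^\alpha_\varepsilon$, $r>0$ and a small $\tau>0$. In the set of $u\in C([0,\tau],X^\alpha_\varepsilon)$ with $\sup_t\|u(t)-u_0^\varepsilon\|_{X^\alpha_\varepsilon}\le r$, consider
\[
(Tu)(t)=e^{-L_\varepsilon t}u_0^\varepsilon+\int_0^t e^{-L_\varepsilon(t-s)}F_\varepsilon(u(s))\,ds .
\]
The estimate $\|L^\alpha_\varepsilon e^{-L_\varepsilon t}\|\le C t^{-\alpha}$ bounds the integral term by $C\tau^{1-\alpha}$, using the Lipschitz constant or the bound from Step 2. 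Strong continuity of the semigroup on $X^\alpha_\varepsilon$ controls $e^{-L_\varepsilon t}u_0^\varepsilon-u_0^\varepsilon$. Together these show that $T$ maps the set into itself and is a contraction for $\tau$ small, so Banach's fixed point theorem gives a unique mild solution.

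\textbf{Step 4: remaining properties.} The fixed-point time depends only on a bound for the data. Continuing the solution while $\|u^\varepsilon(t)\|_{X^\alpha_\varepsilon}$ stays bounded gives the maximal interval and the blow-up alternative. A singular Gronwall inequality applied to the difference of two mild solutions gives continuous dependence in $C([0,\tau_0],X^\alpha_\varepsilon)$, and the same inequality gives uniqueness. Finally, to show the solution is classical I would follow the standard argument in \cite{henry}. Local Lipschitz continuity of $F_\varepsilon$ implies that $t\mapsto F_\varepsilon(u(t))$ is locally Hölder continuous on $(0,\tau)$. The analytic semigroup then yields $u\in C^1((0,\tau),X_\varepsilon)$ with $u(t)\in D(L_\varepsilon)$, so $u$ solves \eqref{eq1} in the classical sense.
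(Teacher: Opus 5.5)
Your route is the paper's. The paper gives no argument beyond invoking the abstract theory of \cite[Section 2.3]{cho} for the sectorial operator $L_\varepsilon$. The needed properties of $F_\varepsilon$ come from the Remark preceding the statement, which transfers Lemma~\ref{lema:nemytskii} to $X^\alpha_\varepsilon$ ``since $X^{1/2}_\varepsilon\hookrightarrow X^\alpha_\varepsilon$''. Your Step 2 is where you part ways with it, and you are right to. For $\alpha<\tfrac12$ that embedding points the wrong way: bounded sets of $X^\alpha_\varepsilon$ are larger than bounded sets of $X^{1/2}_\varepsilon$. So the Remark does not deliver the Lipschitz-on-bounded-sets property the abstract theorem requires. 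Your interpolation is the correct repair, and its constants are indeed $\varepsilon$-independent. For the positive selfadjoint $L_\varepsilon$ one has $X^\alpha_\varepsilon=[X_\varepsilon,X^{1/2}_\varepsilon]_{2\alpha}$. Moreover $\|L_\varepsilon^{1/2}u\|_{X_\varepsilon}=\|u\|_{H^1_\varepsilon(\Omega)}$ controls $\|u\|_{H^1(\Omega)}$ with a constant depending only on the coercivity constant in \eqref{coer}.

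The price is the condition $p_\alpha\ge 2\gamma$, i.e.\ $\alpha\ge\frac{q(\gamma-1)}{2\gamma(q-2)}$. This range is nonempty in $[0,\tfrac12)$ only when $2\gamma<q$, which the paper assumes tacitly when it says one can choose $2\gamma\le p<q$. The restriction is a defect of the statement, not of your argument. At $\alpha=0$ and $\gamma>1$ the Nemytskii operator does not even map $X_\varepsilon=L^2(\Omega)$ into itself. Hence the cited framework cannot give the theorem on the whole range $0\le\alpha<\tfrac12$, and stating the compatibility condition explicitly, as you do, is the correct formulation.

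One minor fix concerns Steps 3 and 4. With the ball in Step 3 centred at $u_0^\varepsilon$, the existence time depends on the modulus of continuity of $t\mapsto e^{-L_\varepsilon t}u_0^\varepsilon$ in $X^\alpha_\varepsilon$, not only on $\|u_0^\varepsilon\|_{X^\alpha_\varepsilon}$. So the first sentence of Step 4 does not follow as written. Centre the ball at the curve $t\mapsto e^{-L_\varepsilon t}u_0^\varepsilon$ instead. This is allowed because $F_\varepsilon$ is Lipschitz on bounded sets and $e^{-L_\varepsilon t}$ is a contraction on $X^\alpha_\varepsilon$. The existence time then depends only on $\|u_0^\varepsilon\|_{X^\alpha_\varepsilon}$, which gives the blow-up alternative directly.
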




Once a local solution is shown to exist for the abstract problem associated with \eqref{eq:Pe'}, Theorem~\ref{thm:local-eps} gives a criterion to verify its globality, namely it suffices to show that 
\begin{equation}\label{limsup1-eps}
    \limsup_{t \to T^-} \|u^\varepsilon(t,u_0^\varepsilon)\|_{X^\al_\ep} < \infty,
\end{equation}
for all $T>0$. Hence, global existence is a direct consequence of good control of the solution $u^\varepsilon(\cdot,u_0^\varepsilon)$ at the $X^\al_\ep$-level for large times. Using the analyticity of the semigroup generated by $L_\varepsilon$ and the
theory of fractional powers of sectorial operators, it follows that, for
each $\alpha\in[0,1]$, there exists a constant $c_\alpha>0$, independent of
$\varepsilon$, such that
\[
\|L_\varepsilon^\alpha e^{-tL_\varepsilon}\|_{\mathcal{L}(X_\varepsilon)}
\le \frac{c_\alpha}{t^\alpha}e^{-\beta t},
\qquad t>0.
\]
Consequently, for any $u_0^\varepsilon\in
X_\varepsilon^\alpha$,
\begin{align*}
\|u^\varepsilon(t,u_0^\varepsilon)\|_{X_\varepsilon^\alpha}
\le\;
c_\alpha e^{-\beta t}\|u_0^\varepsilon\|_{X_\varepsilon^\alpha}
+\int_0^t \frac{c_\alpha}{(t-s)^\alpha}e^{-\beta(t-s)}
\|F_\varepsilon(u^\varepsilon(s,u_0^\varepsilon))\|_{X_\varepsilon}\,ds ,
\end{align*}
which, not surprisingly, indicates that the map $s \mapsto \|F_\ep(u(s,u_0^\ep))\|_{X_\ep}$ is the only possible reason why a local solution would not be global. Then if $F_\ep$ is uniformly bounded, say by $c_F > 0$, the estimate above implies 
\begin{align}\label{limsup_b}
    \limsup\limits_{t \to \infty} \|u(t, u_0^\ep)\|_{X^\al_\ep} & \leqslant 
	c_{\al}c_F \limsup\limits_{t \to \infty} \int_0^t (t-s)^{-\al}e^{-\beta(t-s)}ds < \infty
\end{align}
by a standard argument using $\Gamma$-functions. 
\medskip


\begin{theorem}[Boundedness of global solutions]\label{teosolgloblim1-eps}
Under assumptions \eqref{A1}--\eqref{A3}, \eqref{H1}--\eqref{H3}, and the nonlinear conditions \eqref{eq:growth}–\eqref{eq:diss}, the local solution $u^\varepsilon(\cdot, u_0^\varepsilon)$ of problem \eqref{eq1} is global. Moreover, there exists a constant $C = C(\|u_0^\varepsilon\|_{H_\varepsilon^1(\Omega)}) > 0$ such that $$\limsup_{t \to \infty} \|u^\varepsilon(t, u_0^\varepsilon)\|_{X^\al_\ep} \le C.$$
\end{theorem}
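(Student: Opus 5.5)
The strategy is the classical Lyapunov-functional argument for gradient reaction--diffusion systems, combined with the variation of constants estimate \eqref{limsup_b}, keeping all constants independent of $\varepsilon$. First we reduce to the case of bounded $f$. By the dissipativity condition \eqref{eq:diss} and a comparison (maximum principle) argument for the Neumann problem, solutions with $L^\infty$ data are eventually confined to $[-m_f-\delta, m_f+\delta]$. Because $H^1_\varepsilon$ data need not be bounded, we instead proceed as follows. Working with a $C^2$ truncation $\tilde f$ equal to $f$ on a large interval and globally Lipschitz is one option. The cleaner route is to use the energy directly.

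We define
\[
V_\varepsilon(u)=\frac12\int_\Omega A^\varepsilon\nabla^\varepsilon u\cdot\nabla^\varepsilon u+|u|^2\,dy\,dx-\int_\Omega G(u)\,dy\,dx,\qquad G(s)=\int_0^s f(r)\,dr.
\]
For the classical solutions given by Theorem \ref{thm:local-eps}, we compute $\frac{d}{dt}V_\varepsilon(u^\varepsilon(t))=-\|u^\varepsilon_t\|^2_{L^2(\Omega)}\le 0$. From \eqref{eq:diss} we have $G(s)\le G_0:=\max_{|r|\le m_f}|G(r)|$ for all $s$, since $G$ is nonincreasing for $s\ge m_f$ and nondecreasing for $s\le -m_f$. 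Hence
\[
\tfrac12\|u^\varepsilon(t)\|^2_{H^1_\varepsilon(\Omega)}\le V_\varepsilon(u_0^\varepsilon)+G_0|\Omega|.
\]
By \eqref{eq:poly-f} and \eqref{eq:emb-2gamma} (with $p=\gamma+1\le 2\gamma$, or directly through $L^{2\gamma}\subset L^{\gamma+1}$ on the bounded set $\Omega$), $V_\varepsilon(u_0^\varepsilon)\le C(1+\|u_0^\varepsilon\|^{\gamma+1}_{H^1_\varepsilon})$ with $C$ independent of $\varepsilon$. Thus $\|u^\varepsilon(t)\|_{H^1_\varepsilon}$ is bounded on the maximal interval by a constant $R$ depending only on $\|u_0^\varepsilon\|_{H^1_\varepsilon(\Omega)}$.

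Next, Lemma \ref{lema:nemytskii} with this $R$ gives $\|F_\varepsilon(u^\varepsilon(s))\|_{X_\varepsilon}\le C_F(R)$ for all $s$ in the existence interval. Inserting this into the variation of constants bound gives
\[
\|u^\varepsilon(t)\|_{X^\alpha_\varepsilon}\le c_\alpha\|u_0^\varepsilon\|_{X^\alpha_\varepsilon}+c_\alpha C_F\,\Gamma(1-\alpha)\beta^{\alpha-1}.
\]
So \eqref{limsup1-eps} holds for every $T$, and the blow-up alternative in Theorem \ref{thm:local-eps} forces $\tau_{u_0^\varepsilon}=\infty$. Taking $\limsup_{t\to\infty}$, as in \eqref{limsup_b}, yields the stated bound with $C=c_\alpha C_F(R)\Gamma(1-\alpha)\beta^{\alpha-1}$, which depends only on $\|u_0^\varepsilon\|_{H^1_\varepsilon}$.

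The main obstacle is justifying the energy identity rigorously when $u_0^\varepsilon$ is only in $X^\alpha_\varepsilon$ with $\alpha<1/2$, and making sure $V_\varepsilon$ is finite. For positive times, the smoothing of the analytic semigroup puts $u^\varepsilon(t)\in X^1_\varepsilon$, so we run the Lyapunov argument starting at some $t_0>0$. The $H^1_\varepsilon$-norm of $u^\varepsilon(t_0)$ is controlled by $\|u_0^\varepsilon\|_{H^1_\varepsilon}$ when the data are in $H^1_\varepsilon$, which is what the constant in the statement refers to. A second delicate point is uniformity in $\varepsilon$ of $c_\alpha$ and $\beta$; here we use that $L_\varepsilon\ge I$ uniformly by \eqref{coer}, so one can take $\beta<1$ and $c_\alpha=\sup_{\lambda\ge1}\lambda^\alpha e^{-(\lambda-\beta)t}t^\alpha$ via the spectral theorem for selfadjoint $L_\varepsilon$. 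This bound is $\varepsilon$-free.
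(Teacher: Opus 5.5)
Your argument is correct, and its second half is the paper's argument: a bound on $\|F_\varepsilon(u^\varepsilon(s))\|_{X_\varepsilon}$ along the trajectory is inserted into the variation-of-constants estimate, which gives \eqref{limsup1-eps}, globality via the blow-up alternative of Theorem \ref{thm:local-eps}, and the $\limsup$ bound \eqref{limsup_b}.

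The difference is how that bound is obtained. The paper only writes ``if $F_\varepsilon$ is uniformly bounded, say by $c_F$'' and leaves the underlying uniform $H^1_\varepsilon(\Omega)$ bound to the dissipativity theory of \cite{cho,Hale1988,henry}. You prove it instead with the Lyapunov functional $V_\varepsilon$, using \eqref{eq:diss} only to get $G\le G_0$, and then apply Lemma \ref{lema:nemytskii}.

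Your route has three advantages:
\begin{itemize}
\item[(a)] it is self-contained;
\item[(b)] it makes the $\varepsilon$-independence of every constant explicit, via \eqref{eq:emb-2gamma} and $L_\varepsilon\ge I$, whereas the paper only asserts this for $c_\alpha$ and $\beta$;
\item[(c)] it shows why $C$ depends on $\|u_0^\varepsilon\|_{H^1_\varepsilon(\Omega)}$, namely through $V_\varepsilon(u_0^\varepsilon)$.
\end{itemize}
What it does not give, unlike the dissipativity the paper invokes, is an absorbing set independent of the initial datum. That is needed later for the attractor, but not for this statement.

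One small slip: in your displayed bound, keep the factor $e^{-\beta t}$ on the term $c_\alpha\|u_0^\varepsilon\|_{X^\alpha_\varepsilon}$. Without it, that term survives in the $\limsup$ and your stated value of $C$ does not follow.
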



The well-posedness of bounded global solutions for \eqref{eq1} allows us to define the (nonlinear) semigroup of solutions given by $T_\varepsilon(t)u_0^\varepsilon = u^\varepsilon(t,u_0^\varepsilon), \ t \ge 0.$
By \cite[p.~71]{cho}, the family \(\{T_\varepsilon(t)\}_{t \ge 0}\) forms a strongly continuous semigroup on $X^\al_\ep$.

We now briefly discuss the existence of global attractors for problem \eqref{eq1}.
The existence of a compact global attractor for the semiflow 
$\{T_\varepsilon(t)\}_{t\ge0}$ follows from the abstract theory of 
infinite-dimensional dissipative dynamical systems developed in \cite[Theorems 1.1.2 and 4.1.1]{cho} 
and \cite[Chapter 3]{Hale1988}. 
In particular, the semiflow satisfies the classical hypotheses ensuring the existence of a global attractor: dissipativity, asymptotic compactness and strong continuity.

\begin{theorem}[Global attractor]\label{glob-eps}
Under assumptions \eqref{A1}--\eqref{A3},
\eqref{H1}--\eqref{H3},
and \eqref{eq:growth}--\eqref{eq:diss},
the semigroup $\{T_\varepsilon(t)\}_{t\ge0}$ generated by 
\eqref{eq1} has a compact global attractor 
$\mathcal{A}_\varepsilon \subset X^\al_\ep$,
which is invariant and attracts all bounded subsets of 
$X^\al_\ep$. Moreover,
$$\sup_{u\in\mathcal{A}_\varepsilon}\|u\|_{X^\al_\ep}\le C,$$
for some constant $C>0$ independent of time.
\end{theorem}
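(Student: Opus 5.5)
The plan is to apply the abstract existence theorem for gradient-like dissipative semiflows, \cite[Theorems 1.1.2 and 4.1.1]{cho} and \cite[Chapter 3]{Hale1988}. A semigroup that is point dissipative, asymptotically compact (here: compact for $t>0$), and maps bounded sets to bounded orbits has a compact global attractor. It then suffices to verify three properties for $\{T_\varepsilon(t)\}_{t\ge0}$ on $X^\alpha_\varepsilon$, each with constants independent of $\varepsilon$:
\begin{itemize}
\item[(i)] bounded sets have bounded positive orbits;
\item[(ii)] there is an absorbing ball whose radius $C$ does not depend on $\varepsilon$ or on the initial datum;
\item[(iii)] $T_\varepsilon(t)$ is compact for every $t>0$.
\end{itemize}

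\textbf{Step 1: energy estimates.} Multiply \eqref{eq:Pe'} by $u^\varepsilon$. Condition \eqref{eq:diss} implies $f(s)s\le c_f$ for all $s$, where $c_f=\max_{|s|\le m_f}|f(s)s|$, and the coercivity \eqref{coer} gives
\[
\tfrac12\tfrac{d}{dt}\|u^\varepsilon\|^2_{L^2(\Omega)}+\min\{\alpha,1\}\,\|u^\varepsilon\|^2_{H^1_\varepsilon(\Omega)}\le c_f|\Omega|.
\]
Gronwall's lemma then yields an absorbing ball in $L^2(\Omega)$.

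\textbf{Step 2: Lyapunov functional.} Set $F(s)=\int_0^s f$ and
\[
V_\varepsilon(u)=\tfrac12\int_\Omega \big(A^\varepsilon\nabla^\varepsilon u\cdot\nabla^\varepsilon u+u^2\big)-\int_\Omega F(u).
\]
It is nonincreasing along solutions, since $\tfrac{d}{dt}V_\varepsilon=-\|u_t\|^2_{L^2}$. By \eqref{eq:diss}, $F(s)\le C$, so $V_\varepsilon(u)\ge \tfrac12\min\{\alpha,1\}\|u\|^2_{H^1_\varepsilon}-C|\Omega|$. On the other side, $V_\varepsilon$ is bounded on bounded sets of $H^1_\varepsilon$ uniformly in $\varepsilon$ by \eqref{eq:poly-f} and \eqref{eq:emb-2gamma}. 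Together these bound orbits in $H^1_\varepsilon$, which gives (i) at the $X^{1/2}_\varepsilon$ level and hence in $X^\alpha_\varepsilon$ (and Theorem \ref{teosolgloblim1-eps}).

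\textbf{Step 3: $\varepsilon$-independent absorbing ball.} To make the bound independent of the initial datum, I would use the gradient structure. The semigroup is gradient with Lyapunov function $V_\varepsilon$, and its equilibria solve $L_\varepsilon u=f(u)$. Testing that equation with $u$ gives $\|u\|^2_{H^1_\varepsilon}\le C(c_f,\alpha,|\Omega|)$, so the set of equilibria is bounded uniformly in $\varepsilon$. The abstract theorem for gradient, asymptotically compact semigroups with bounded equilibrium set (\cite[Chapter 3]{Hale1988}) then provides point dissipativity and hence (ii).

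An alternative route to (ii) is to combine Step 1 with a uniform Gronwall argument on $\|u\|^2_{H^1_\varepsilon}$, multiplying the equation by $u_t$ and using the integrability of $\|u\|^2_{H^1_\varepsilon}$ over unit time intervals from Step 1. This requires controlling $\int F(u)$ from above, which is available since $F\le C$.

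\textbf{Step 4: compactness.} Fix $t>0$. The variation of constants formula with the smoothing estimate $\|L_\varepsilon^{\beta}e^{-tL_\varepsilon}\|\le c_\beta t^{-\beta}e^{-\beta t}$ for some $\beta\in(\alpha,1)$, combined with the bound on $F_\varepsilon$ from Lemma \ref{lema:nemytskii}, shows that $T_\varepsilon(t)$ maps bounded sets into bounded sets of $X^\beta_\varepsilon$. Since $L_\varepsilon$ has compact resolvent, $X^\beta_\varepsilon\hookrightarrow X^\alpha_\varepsilon$ is compact, which gives (iii).

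Invariance and attraction of bounded sets are then part of the abstract theorem, and the bound $\sup_{\mathcal A_\varepsilon}\|u\|_{X^\alpha_\varepsilon}\le C$ follows because $\mathcal{A}_\varepsilon$ lies in the absorbing ball.

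\textbf{Main obstacle.} I expect Step 3 to be the hardest part, specifically keeping the constant uniform in $\varepsilon$. This relies on the $\varepsilon$-independence of the Sobolev constant $C_S$ in \eqref{eq:emb-2gamma} (from \eqref{H3} and the cusp embedding of \cite{baddomains}) and of the coercivity constant $\alpha$ in \eqref{coer}. The step also needs care because $\gamma$ may be large: the bound $\int F(u)\le C|\Omega|$ avoids growth issues from above, but the $u_t$-testing requires $u_t\in L^2$, which is justified by the classical regularity in Theorem \ref{thm:local-eps} for $t>0$.
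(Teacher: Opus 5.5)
\textbf{Verdict: your proof is correct and follows a genuinely different route from the paper's, with three minor points to tighten.}

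You share the paper's skeleton: verify the hypotheses of the abstract existence theorem in \cite{cho,Hale1988}. The difference is how you get the key hypothesis, dissipativity.

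The paper gives no estimate at this point. It asserts dissipativity, asymptotic compactness and strong continuity. Its only supporting computation is Theorem~\ref{teosolgloblim1-eps}. That result comes from the variation-of-constants formula under the assumption that $F_\varepsilon$ stays bounded along the trajectory. It also yields only a bound $C(\|u_0^\varepsilon\|_{H^1_\varepsilon(\Omega)})$ that depends on the datum, which is not an absorbing set.

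You use the gradient structure instead:
\begin{itemize}
\item $V_\varepsilon$ is nonincreasing along solutions.
\item $F\le C$, from \eqref{eq:diss}, makes $V_\varepsilon$ coercive.
\item Testing the stationary equation with $u$ bounds $\mathcal{E}_\varepsilon$ in $H^1_\varepsilon(\Omega)$ uniformly in $\varepsilon$.
\item Hale's theorem for asymptotically smooth gradient systems with a bounded equilibrium set then gives the attractor as $W^u(\mathcal{E}_\varepsilon)$.
\end{itemize}
This costs a little more, since it uses the symmetry of $A^\varepsilon$ and the regularity behind $\tfrac{d}{dt}V_\varepsilon=-\|u_t\|^2$. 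In return it avoids the circular a priori bound on $F_\varepsilon$ and needs only the sign condition. It also produces the gradient structure and the decomposition $\mathcal{A}_\varepsilon=\bigcup_i W^u(u^*_{\varepsilon,i})$, which the paper later simply assumes in Theorem~\ref{teoprincipal}. The paper's abstract route has its own advantage: once an absorbing set is known, it would also cover non-gradient perturbations.

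Three points to tighten.

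\textbf{First}, Hale's theorem does not track constants, so by itself it does not give the $\varepsilon$-independent radius you list in (ii). The statement only asks for a time-independent bound, so this is extra, but there is a clean way to get it.
\begin{itemize}
\item Step 1 gives, for $t\ge t_0(\|u_0\|_{L^2(\Omega)})$, $\int_t^{t+1}\|u\|^2_{H^1_\varepsilon(\Omega)}\,ds\le K$, where $K$ depends only on $c_f$, $|\Omega|$ and $\alpha$.
\item Hence some $s_*\in[t,t+1]$ has $\|u(s_*)\|^2_{H^1_\varepsilon(\Omega)}\le K$.
\item Monotonicity of $V_\varepsilon$, together with \eqref{eq:poly-f} and \eqref{eq:emb-2gamma}, then gives $\tfrac12\min\{\alpha,1\}\|u(t+1)\|^2_{H^1_\varepsilon(\Omega)}\le V_\varepsilon(u(s_*))+C|\Omega|\le C'$.
\end{itemize}
Alternatively, use $V_\varepsilon\le\max_{\mathcal{E}_\varepsilon}V_\varepsilon$ on $W^u(\mathcal{E}_\varepsilon)$.

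\textbf{Second}, your uniform Gronwall alternative does not work as stated when $\gamma>1$. It needs $\int_t^{t+1}V_\varepsilon(u(s))\,ds$ bounded. That requires a \emph{lower} bound on $F$, and hence control of $\int_t^{t+1}\|u\|^{\gamma+1}_{L^{\gamma+1}(\Omega)}\,ds$, which Step 1 does not provide under the mere sign condition. The bound $F\le C$ goes in the wrong direction for this. The good-time argument above replaces it.

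\textbf{Third}, $V_\varepsilon$ is defined and continuous only on $X^{1/2}_\varepsilon$. For $\alpha<1/2$, run the gradient argument in $X^{1/2}_\varepsilon$ and then transfer it to $X^\alpha_\varepsilon$. The transfer uses the uniform local existence time on bounded sets of $X^\alpha_\varepsilon$ (Theorem~\ref{thm:local-eps}) and the smoothing into $X^{1/2}_\varepsilon$. This also supplies the orbit bound on $[0,t]$ that Step 4 presupposes.
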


We now prove that the global attractor $\mathcal A_\varepsilon$ associated with
problem \eqref{eq:Pe'} is uniformly bounded in $L^\infty(\Omega)$.

Since the transversal section at a fixed $x\in(0,1)$ is the ball $a(x)B_1$ and
\eqref{H3} yields $a(x)\ge K_1 x^{\alpha_1}$ for $x\in(0,x_0)$, its
$n$--dimensional measure satisfies
\[
|a(x)B_1|=|B_1|\,a^n(x)\ge |B_1|K_1^n x^{n\alpha_1}.
\]
Therefore, the volume of the truncated domain
$\Omega_r:=\Omega\cap\{0<x<r\}$ can be estimated as
\[
|\Omega_r|
=\int_0^r |B_1|a^n(x)\,dx
\;\ge\;
|B_1|\int_0^r K_1^n x^{n\alpha_1}\,dx
=
\frac{|B_1|K_1^n}{1+n\alpha_1}\,r^{1+n\alpha_1}.
\]
This shows that the measure of regions concentrated near the degenerate endpoint
grows at least polynomially with exponent $\mu:=1+n\alpha_1.$

Choosing the critical exponent $p=\frac{2\mu}{\mu-2}$ and combining the above
estimate with the coercivity of the quadratic form associated with $L_\varepsilon$,
we obtain the functional inequality
\[
\|u\|_{L^{\frac{2\mu}{\mu-2}}(\Omega)}^2
\;\le\;
C\bigl(b_\ep(u, u)+\|u\|_{L^2(\Omega)}^2\bigr),
\qquad
u\in H^1_\varepsilon(\Omega),
\]
with a constant $C>0$ independent of $\varepsilon$.
Here $ b_\ep : H^1_{\ep}(\Omega) \times H^1_{\ep}(\Omega) \to \mathbb{R}$ denotes the closed, symmetric bilinear form associated with
the uniformly elliptic operator $L_\varepsilon$, given by
\[
b_\ep(u, u) :=
\int_\Omega A^\varepsilon(x,y)\,\nabla^\varepsilon u\cdot\nabla^\varepsilon u
\,dx\,dy,
\quad
u \in H^1_\varepsilon(\Omega).
\]
By the uniform ellipticity of $A^\varepsilon$, this form is coercive and controls
the $H^1_\varepsilon(\Omega)$--norm uniformly with respect to $\varepsilon$.

Since $\mu>2$ (equivalently, $\alpha_1>1/n$), \cite[Corollary 2.4.3, p. 77]{davies} applies and yields
the ultracontractive estimate
\begin{equation}\label{eq:ultraPe}
\|e^{-tL_\varepsilon}\varphi\|_{L^\infty(\Omega)}
\le
C\,t^{-\mu/4}\,\|\varphi\|_{L^2(\Omega)},
\qquad
0<t\le1,
\end{equation}
for all $\varphi\in L^2(\Omega)$, with $C>0$ independent of $\varepsilon$.

\medskip
Let $u^\varepsilon(t)$ be a global solution of \eqref{eq:Pe'}.
Using the variation of constants formula and \eqref{eq:ultraPe}, we obtain for
$t\in(0,1]$
\[
\|u^\varepsilon(t)\|_{L^\infty(\Omega)}
\le
C\,t^{-\mu/4}\|u^\varepsilon(0)\|_{L^2(\Omega)}
+
C\int_0^t (t-s)^{-\mu/4}
\|f(u^\varepsilon(s))\|_{L^2(\Omega)}\,ds.
\]

Since the semiflow generated by \eqref{eq:Pe'} is dissipative in
$H^1_\varepsilon(\Omega)$, there exists a bounded absorbing set in
$L^2(\Omega)$.
Together with the growth condition~\eqref{eq:growth} on $f$, this implies that
the right-hand side above remains uniformly bounded for large times.
Hence, there exist constants $T>0$ and $N>0$, independent of $\varepsilon$, such that $\|u^\varepsilon(t)\|_{L^\infty(\Omega)} \le N,
\  \forall\,t\ge T.
$

Finally, by invariance of the global attractor, we conclude that
\begin{equation}\label{eq:liminf}
\sup_{u\in\mathcal A_\varepsilon}\|u\|_{L^\infty(\Omega)} \le N.
\end{equation}

\section{Estimates for the Elliptic Problem}\label{sec:4}

In this section, we derive estimates that describe the asymptotic behavior of resolvents as $\ep$ tends to zero.

We consider the family of elliptic problems defined in thin domains \( R^\varepsilon \subset \mathbb{R}^{1+n} \)
\begin{equation} \label{Probpert}
\left\{
\begin{aligned}
- \nabla \cdot \left( A^\varepsilon\left(x, \tfrac{y}{\varepsilon} \right) \nabla w^\varepsilon \right) + w^\varepsilon &= f^\varepsilon, &&\text{in } R^\varepsilon, \\
\frac{\partial w^\varepsilon}{\partial \nu^\varepsilon} &= 0, &&\text{on } \partial R^\varepsilon,
\end{aligned}
\right.
\end{equation}
where the matrix \( A^\varepsilon \) is assumed to satisfy the assumptions \eqref{A1}--\eqref{A3}, and \( f^\varepsilon \in L^2(R^\varepsilon) \).

To carry out the analysis in a fixed reference domain, we apply the change of variables $(x, y) \mapsto (x, \tfrac{y}{\varepsilon}) \in \Omega.$ Under this transformation, the problem \eqref{Probpert} becomes the following scaled elliptic problem posed on the fixed domain \( \Omega \)
\begin{equation} \label{Probpertmud}
\left\{
\begin{aligned}
- \nabla^\varepsilon \cdot \left( A^\ep(x, y)\, \nabla^\varepsilon u^\varepsilon \right) + u^\varepsilon &= f^\varepsilon,
&& \text{in } \Omega, \\
\big( A^\ep(x,y)\, \nabla^\varepsilon u^\varepsilon \big)\cdot \nu &= 0,
&& \text{on } \partial \Omega.
\end{aligned}
\right.
\end{equation}

\noindent
The variational formulation associated with problem \eqref{Probpertmud} consists in finding \( u^\varepsilon \in H^1_{\ep}(\Omega) \) such that
\begin{equation} \label{eq:variational_formulation}
b_\ep(u^\varepsilon, \varphi) = \int_\Omega f^\varepsilon \varphi, \quad \forall\, \varphi \in H^1_{\ep}(\Omega).
\end{equation}
Using the structure of $A^\ep(x,y)$, the bilinear scalar product can be expanded as
\begin{align} \label{exp}
b_\ep(u^\varepsilon, \varphi) &= \int_\Omega \Bigg[
\partial_x u^\varepsilon A_{01}(x) \partial_x \varphi
+ \frac{1}{\varepsilon^2} \nabla_y u^\varepsilon \cdot A_{03}(x,y) \nabla_y \varphi \nonumber \\
&\quad+ \sum_{k=1}^\infty \varepsilon^k \bigg(
\partial_x u^\varepsilon A_{k1}(x,y) \partial_x \varphi
+ \frac{1}{\varepsilon} \partial_x u^\varepsilon \cdot A_{k2}(x,y) \nabla_y \varphi \nonumber \\
&\quad+ \frac{1}{\varepsilon} \nabla_y u^\varepsilon \cdot A_{k2}^T(x,y) \partial_x \varphi
+ \frac{1}{\varepsilon^2} \nabla_y u^\varepsilon \cdot A_{k3}(x,y) \nabla_y \varphi
\bigg)
\Bigg]
+ u^\varepsilon \varphi \,dxdy.
\end{align}

\noindent

Well-posedness of the variational problem \eqref{eq:variational_formulation} follows from assumptions \eqref{H1} and \eqref{A1}--\eqref{A3}. The bilinear form \( b_\ep(\cdot, \cdot) \) is both continuous and coercive on the weighted Sobolev space \( H^1_\varepsilon(\Omega) \), and the linear functional associated with \( f^\varepsilon \in L^2(\Omega) \) is continuous. These properties ensure that the variational formulation admits a unique weak solution \( u^\varepsilon \in H^1_\varepsilon(\Omega) \), according to the Lax-Milgram Theorem.

\vspace{0.1cm}
We now present the convergence theorem, which characterizes the asymptotic behavior of the family of solutions \( \{u^\varepsilon\} \) to the variational problem \eqref{eq:variational_formulation} as \( \varepsilon \to 0 \). The result confirms the convergence to a one-dimensional limit problem governed by an effective equation in the weighted space \( H^1_a(0,1) \). From now on, all results are proved under assumptions \eqref{A1}--\eqref{A3}.

\begin{theorem}\label{thm:convergencia}
Let $a$ be a function satisfying condition \eqref{H1} and $\{f^\ep\}_{\ep \in (0,\ep_0]} \subset L^2(\Omega)$ such that $\|f^\ep\|_{L^2(\Omega)} \leq C_f$ for some $C_f > 0$, and suppose there exists $\hat{f} \in L^2_a(0,1)$ such that $Mf^\ep \to \hat{f}$ weakly in $L^2_a(0,1)$ as $\ep \to 0$. If $u^\ep \in H^1_{\ep}(\Omega)$ solves
\begin{align*}\small
b_\ep(u^\ep, \varphi)  = \int_\Omega f^\ep \varphi, \quad \forall \varphi \in H^1_{\ep}(\Omega),
\end{align*}
then there exists $u^0 \in H^1_a(0,1)$ such that, $u^\ep \to E u^0$ in $H^1(\Omega)$, where $u^0$ satisfies, for all $\varphi \in H^1_a(0,1)$,
\[
\int_0^1a^n(x)\Big(A_{01}(x) u^{0}_x(x)\varphi_x(x) + u^0(x)\varphi(x)\Big) dx = \int_0^1 a^n(x) \hat{f}(x)\varphi(x) dx.
\]
\end{theorem}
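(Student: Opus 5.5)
The plan is the standard energy/compactness argument for thin domains, adapted to the weighted setting, followed by an energy-convergence step that turns weak convergence into strong convergence in $H^1(\Omega)$.

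\textbf{Step 1: a priori bounds.} Test the variational identity with $\varphi=u^\ep$. Coercivity \eqref{coer} gives
\[
\alpha\|\partial_x u^\ep\|^2_{L^2(\Omega)}+\frac{\alpha}{\ep^2}\|\nabla_y u^\ep\|^2_{L^2(\Omega)}+\|u^\ep\|^2_{L^2(\Omega)}\le C_f\|u^\ep\|_{L^2(\Omega)}.
\]
Hence $\|u^\ep\|_{H^1_\ep(\Omega)}\le C$ and, in particular, $\|\nabla_y u^\ep\|_{L^2(\Omega)}\le C\ep$. Lemma \ref{lemma22} then yields $\|u^\ep-EMu^\ep\|_{L^2(\Omega)}\le C\ep$. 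Write $v^\ep:=Mu^\ep$ (the averaging on the fixed domain, i.e.\ Lemma \ref{teo21} with $\ep=1$). The derivative formula there expresses $(v^\ep)_x$ through $u^\ep_x$ and $a'(x)z\cdot\nabla_y u^\ep$. Since $a\in C^1$, $a'$ is bounded, and Jensen's inequality with the weight $a^n$ (using $\int_\Omega g=|B_1|\int_0^1 a^n Mg$) gives $\|v^\ep\|_{H^1_a(0,1)}\le C$. One must check that the change of variables $l_x$ contributes exactly the factor $a^n$, so that no negative power of $a$ appears.

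\textbf{Step 2: compactness and identification of the limit equation.} Extract a subsequence with $v^\ep\rightharpoonup u^0$ weakly in $H^1_a(0,1)$. Then $Ev^\ep\rightharpoonup Eu^0$ in $H^1(\Omega)$, since $\|Ew\|_{H^1(\Omega)}^2=|B_1|\|w\|^2_{H^1_a}$. Because $\nabla_y u^\ep\to0$ strongly and $u^\ep-EMu^\ep\to0$ in $L^2$, we get $u^\ep\rightharpoonup Eu^0$ in $H^1(\Omega)$. Passing to $L^2$-strong convergence can use either the compact embedding $H^1(\Omega)\hookrightarrow L^2(\Omega)$ stated in Section \ref{sec:3} under \eqref{H3}, or Rellich on $(x_0,1)$ combined with the observation that $\int_0^\delta a^n$ is small; I would use the latter to stay under \eqref{H1} alone.

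Next take test functions $\varphi=E\psi$ with $\psi\in H^1_a(0,1)$, so that $\nabla_y\varphi=0$. Expanding via \eqref{exp}, every term involving $\nabla_y\varphi$ vanishes. The surviving terms are:
\begin{itemize}
\item $\int_\Omega A_{01}\partial_x u^\ep\psi_x$, which converges to $|B_1|\int_0^1 a^nA_{01}u^0_x\psi_x$;
\item $\sum_k\ep^k\int_\Omega A_{k1}\partial_xu^\ep\psi_x=O(\ep)$ by \eqref{A3};
\item the mixed terms $\ep^{k-1}\int_\Omega\nabla_yu^\ep\cdot A_{k2}^T\psi_x=O(\ep^{k-1}\cdot\ep)$, which vanish in the limit.
\end{itemize}
On the right-hand side, $\int_\Omega f^\ep E\psi=|B_1|\int_0^1 a^n Mf^\ep\,\psi\to|B_1|\int_0^1 a^n\hat f\psi$. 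This gives the limit identity. Uniqueness of its solution follows from Lax--Milgram in $H^1_a(0,1)$, so the whole family converges.

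\textbf{Step 3: strong convergence (the main obstacle).} Upgrading weak to strong convergence in $H^1(\Omega)$ is where the real work lies. I would use energy convergence:
\[
b_\ep(u^\ep,u^\ep)=\int_\Omega f^\ep u^\ep\to|B_1|\int_0^1 a^n\hat f u^0,
\]
since $u^\ep\to Eu^0$ strongly in $L^2$ and $f^\ep\rightharpoonup$ weakly with $\int_\Omega f^\ep Eu^0=|B_1|\int a^nMf^\ep u^0$. The limit equals the limit energy $|B_1|\int_0^1 a^n(A_{01}|u^0_x|^2+|u^0|^2)$. Expanding $b_\ep(u^\ep-Eu^0,u^\ep-Eu^0)$ and using coercivity then gives
\[
\alpha\|\nabla^\ep(u^\ep-Eu^0)\|^2+\|u^\ep-Eu^0\|^2\to0.
\]
The cross terms converge by the weak convergence already established, and the $O(\ep)$ corrections vanish. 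There is a delicate point: the cross term $b_\ep(u^\ep,Eu^0)$ contains $A^\ep$ rather than $A_0$, which is handled by the $O(\ep)$ bounds above. The strong $L^2$ compactness near the cusp must also be justified using the integrability of $a^n$ near $0$, which I expect to be the main technical obstacle.
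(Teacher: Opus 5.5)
Your outline is the paper's: coercive a priori bound, weak limits, testing with $E\psi$, then an energy identity to upgrade to strong convergence. Your treatment of the $\ep^k$ corrections and of $b_\ep(u^\ep-Eu^0,u^\ep-Eu^0)$ is in fact cleaner.

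\textbf{The gap is the strong $L^2(\Omega)$ convergence near the cusp, which Step 3 cannot do without.} In $\int_\Omega f^\ep u^\ep=\int_\Omega f^\ep(u^\ep-Eu^0)+\int_\Omega f^\ep Eu^0$, the first term pairs two sequences that converge only weakly. The route you choose, Rellich on $\{x>x_0\}$ plus smallness of $|\Omega_\delta|=|B_1|\int_0^\delta a^n$, does not give $\sup_\ep\|u^\ep\|_{L^2(\Omega_\delta)}\to0$. Small measure only helps for an equi-integrable family, e.g.\ one bounded in $L^q$ with $q>2$. Integrability of $a^n$ is automatic and is not the relevant quantity. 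After removing $u^\ep-EMu^\ep=O(\ep)$ (Lemma \ref{lemma22}), the fundamental theorem of calculus and Cauchy--Schwarz on $(x,x_0)$ give, for $v=Mu^\ep$,
\[
\int_0^\delta a^n|v|^2\,dx\le C\|v\|_{H^1_a(0,1)}^2\int_0^\delta a^n(x)\Bigl(1+\int_x^{x_0}a^{-n}(t)\,dt\Bigr)dx .
\]
This is small uniformly in $\ep$ only when $a^n(x)\int_x^{x_0}a^{-n}$ is integrable near $0$. That follows from \eqref{H2} (Cauchy--Schwarz against $\int_0^1a^nW^2<\infty$) or from \eqref{H3}, but not from \eqref{H1}.

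This cannot be fixed by a cleverer argument under \eqref{H1} alone. Take a profile of \emph{rooms and passages} type:
\begin{itemize}
\item[(a)] $a\approx x^2$ on most of each interval $(2^{-k-1},2^{-k})$;
\item[(b)] these rooms are separated by dips of width $w_k$ where $a\approx m_k$, with $m_k^n/w_k\ll 2^{-k(2n+1)}$;
\item[(c)] transitions over lengths $\sim2^{-k}$ keep $a\in C^1([0,1])$.
\end{itemize}
Take functions equal to a normalizing constant on one room and $0$ on the others. They are bounded in $H^1_a(0,1)$, have unit $L^2_a$-norm and essentially disjoint supports. Hence $H^1_a(0,1)\hookrightarrow L^2_a(0,1)$ and $L_0^{-1}$ are not compact. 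Choose $g_\ep\rightharpoonup0$ with $\|L_0^{-1}g_\ep\|_{L^2_a(0,1)}\not\to0$ and set $f^\ep=Eg_\ep$. Then Corollary \ref{cor:conresolv} shows $u^\ep\not\to0=Eu^0$, so the conclusion itself fails.

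The paper's proof simply asserts the strong $L^2$ convergence, implicitly through the compact embedding of Section \ref{sec:3}. So you should take your first option (that embedding, i.e.\ \eqref{H3}), or alternatively assume \eqref{H2} and use the estimate above. In either case, state the extra hypothesis explicitly rather than trying to stay under \eqref{H1}.
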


\demo
Note that the uniform bound $\|f^\varepsilon\|_{L^2(\Omega)} \leq C_f$ implies, by the coercivity of the bilinear form $b_\ep(\cdot, \cdot)$, the existence of a constant $C > 0$, independent of $\ep$, such that $\|u^\varepsilon\|_{H^1(\Omega)} \leq C.$ Then, there exists a subsequence such that: $u^\ep \rightharpoonup u^0$ weakly in $H^1(\Omega)$, and $u^\ep \to u^0$ strongly in $L^2(\Omega)$.

Since, $\|\nabla_y u^\ep\|_{L^2(\Omega)} \leq C\ep \Rightarrow \nabla_y u^\ep \to 0 \text{ strongly in } L^2(\Omega),$ it follows that $u^0 = u^0(x)$, that is, $u^0$ is independent of $y$.

Let $\psi \in H^1_a(0,1)$ and define $\varphi(x,y) := \psi(x)$, constant in $y$. This choice is valid since $\varphi \in H^1(\Omega)$.

\paragraph{Terms converging to zero:}
\begin{itemize}
  \item $\nabla_y \varphi = 0 \Rightarrow$ all terms involving $\nabla_y \varphi$ vanish.
  \item Since \( \partial_x \psi \in L^\infty(0,1) \) and \( A_{k1}(x,y) \) is uniformly bounded by hypothesis, we estimate
\[
\left\| \varepsilon^k \partial_x u^\varepsilon A_{k1}(x,y) \partial_x \psi(x) \right\|_{L^1(\Omega)}
\leq \varepsilon^k C\|A_{k1}\|_{L^\infty(\Omega)} \|\partial_x \psi\|_{L^\infty(0,1)} \|\partial_x u^\varepsilon\|_{L^2(\Omega)}.
\]
As \( \|\partial_x u^\varepsilon\|_{L^2(\Omega)} \) is uniformly bounded and \( \varepsilon^k \to 0 \) for \( k \geq 1 \), it follows that
\[
\varepsilon^k \partial_x u^\varepsilon A_{k1} \partial_x \psi \to 0 \quad \forall k\geq1 \ \text{as } \ep \to 0.
\]
  \item Since $\nabla_y u^\ep \to 0$ strongly, and due to the structure of $A_{k2}(x,y)$, we have
  \[
  \left\| \ep^{k-1} \nabla_y u^\ep A_{k2}^T \partial_x \psi \right\|_{L^1(\Omega)} \leq \ep^{k-1}C\|A_{k2}\|_{L^\infty(\Omega)}\|\partial_x \psi\|_{L^\infty (0,1)}\|\nabla_y u^\ep\|_{L^2(\Omega)} \to 0, \forall k\geq1 \ \text{as } \ep \to 0.
  \]
\end{itemize}

Only the terms $\partial_x u^\ep A_{01} \partial_x \psi + u^\ep \psi$, remain in the limit, and since $\partial_x u^\ep\rightharpoonup \partial_x u^0$ weakly in $L^2$, we obtain
\begin{align*}
  \int_\Omega A_{01}(x) \partial_x u^\ep \partial_x \psi + u^\ep \psi dxdy
  &\to \int_\Omega A_{01}(x) \partial_x u^0(x) \partial_x \psi + u^0(x) \psi(x) dxdy \\
  &= |B_1| \int_0^1 a^n(x)\left( A_{01}(x) u^{0}_{x}(x) \psi_{x}(x) + u^0(x) \psi(x) \right) dx.
\end{align*}
\noindent Right-hand side
  $$\int_\Omega f^\ep \psi(x) dxdy = |B_1| \int_0^1 a^n(x) Mf^\ep(x) \psi(x) dx \to |B_1| \int_0^1 a^n(x) \hat{f}(x) \psi(x) dx.$$
\noindent Therefore,
$$  \int_0^1 a^n(x)\left( A_{01}(x) \partial_xu^0(x) \partial_x\psi(x) + u^0(x) \psi(x) \right) dx = \int_0^1 a^n(x) \hat{f}(x) \psi(x) dx.$$

This is exactly the limit equation satisfied by $u^0$.

To conclude that $u^\ep \to Eu^0$ strongly in $H^1(\Omega)$, we use the lower semicontinuity of the norm and the fact that we have weak convergence, $\|u^0\|_{H^1(\Omega)} \leq \liminf\limits_{\ep \to 0} \|u^\ep\|_{H^1(\Omega)}.$

Taking the limit
\begin{align*}
|B_1| \int_0^1 a^n(x) A_{01}(x) |u^0_x|^2 dx &= \int_\Omega A_{01}(x) |\partial_x u^0|^2 dxdy 
\leq \liminf_{\ep \to 0} \int_\Omega A_{01}(x) |\partial_x u^\ep|^2 dxdy \\
&\leq \limsup_{\ep \to 0} \int_\Omega A_{01}(x) |\partial_x u^\ep|^2 dxdy \leq \limsup_{\ep \to 0} \int_\Omega (f^\ep -u^\ep)u^\ep dxdy \\
&\leq \int_\Omega (\hat{f} - u^0)u^0 dxdy = \int_\Omega A_{01}(x) |\partial_x u^0|^2 dxdy.
\end{align*}

Therefore, $\lim\limits_{\ep \to 0} \|u^\ep\|_{H^1(\Omega)} = \|u^0\|_{H^1(\Omega)} \ \text{which implies } u^\ep \to Eu^0 \text{ strongly in } H^1(\Omega).$
\fimdemo

\medskip
If $u^0$ satisfies the variational formulation of the limit problem, then it solves the following one-dimensional equation
\begin{equation}\label{problim}
    \begin{cases}
- \frac{1}{a^n}\left( A_{01} a^n u^0_x \right)_x + u^0 = \hat{f} & x \in (0,1), \\
\lim\limits_{x \to 0^+} a^n(x) u^0_x(x) = 0, & \\
u_x^0(1) = 0. &
\end{cases}
\end{equation}



We now justify that the boundary condition
\begin{equation} \label{eq729}
\lim_{x \to 0^+} a^n(x) u^0_x(x) = 0
\end{equation}
is satisfied by the limit solution \( u^0 \). 
We consider the variational formulation associated with the limit problem. For all \( \varphi \in H_a^1(0,1) \), we have
\[
\int_0^1 A_{01}a^nu_x \varphi_x \, dx = \int_0^1 a^n (f-u) \varphi \, dx.
\]
Therefore, $-\left( A_{01}a^n u_x \right)_x = a^n(f-u), \ x \in (0,1).$

Now observe that
\[
\int_0^1 \left[ a^n(x)(f(x) - u(x)) \right]^2 dx \leq \int_0^1 a^n(x) \left[ f(x) - u(x) \right]^2 dx = \|f - u\|^2_{L^2_a(0,1)},
\]
so that $a^n(f - u) \in L^2(0,1), \ \text{and} \  A_{01}a^n u_x \in H^1(0,1).$

Suppose now, by contradiction, that the limit is not zero. Then there exists a constant \( K > 0 \) and a sequence \( x_k \to 0^+ \) such that $\left| a^n(x_k) A_{01}(x_k) u^0_x(x_k) \right| \geq K.$

Hence, we deduce
\[
a^n(x_k) A_{01}(x_k) \left| u^0_x(x_k) \right|^2 \geq \frac{K^2}{a^n(x_k) A_{01}(x_k)}.
\]
Since \( a^n(x_k) \to 0 \) and \( A_{01}(x_k) \) are strictly positive and bounded near zero, the right-hand side diverges as \( k \to \infty \), contradicting the fact that
\[
\int_0^1 a^n(x) A_{01}(x) \left| u^0_x(x) \right|^2 dx < \infty.
\]
Then, we can conclude \eqref{eq729} since $A_{01}$ is strictly positive. 
\medskip

We conclude this section with a convergence result that establishes a precise estimate for the difference between the solution \( u^\varepsilon \) of the perturbed problem \eqref{eq:Pe} and the extension \( E_\varepsilon u^0 \) of the limit solution \( u^0 \) defined in \eqref{eq:Po}. This result not only guarantees the convergence of \( u^\varepsilon \to E_\varepsilon u^0 \) in the energy norm, but also provides a sharp rate of convergence of order \( \mathcal{O}(\varepsilon) \). 

\begin{theorem}\label{Theo23}
Let $a$ be a function satisfying the condition \eqref{H1} and \( \{f^\ep\}_{\ep \in (0,\ep_0]} \subset L^2(R^\ep) \) a family of functions. Suppose that \( u^\ep \in H^1(R^\ep) \) and \( u^0 \in H^1_a(0,1) \) are solutions, respectively, to
\begin{align}
\int_{R^\ep} A^\ep(x, \tfrac{y}{\varepsilon})\nabla u^\ep \cdot \nabla \varphi + u^\ep \varphi \, dxdy &= \int_{R^\ep} f^\ep \varphi \, dxdy, \ \forall \varphi \in H^1(R^\ep), \label{eq:rep_problema}\\
\int_0^1 \tilde{A}(x) u^0_x \varphi_x +\! |B_1|a^nu^0\varphi\, dx &= |B_1|\!\!\int_0^1 a^n M_\ep f^\ep\varphi \, dx, \ \forall \varphi \in H^1_a(0,1), \label{eq:limite}
\end{align}
where $\tilde{A}(x) = |B_1|a^n(x)A_{01}(x)$. Then, there exists a constant \( C > 0 \), independent of \( \ep \) and \( f^\ep \), such that
\begin{equation} \label{teo23}
 \|u^\ep - E_\ep u^0\|_{H^1(R^\ep)} \leq \ep \,C \|f^\ep\|_{L^2(R^\ep)}.
\end{equation}

\end{theorem}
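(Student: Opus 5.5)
We compare $u^\ep$ with $E_\ep u^0$ in the energy norm. The natural route is to test \eqref{eq:rep_problema} with $\varphi = u^\ep - E_\ep u^0$, use \eqref{eq:limite} with the test function $M_\ep \varphi$, and estimate the discrepancy. Write $w := u^\ep - E_\ep u^0$. By coercivity \eqref{coer},
\[
\alpha\|w\|^2_{H^1(R^\ep)} \le \int_{R^\ep} A^\ep \nabla w\cdot\nabla w + |w|^2 = \int_{R^\ep} f^\ep w - \int_{R^\ep} A^\ep \nabla E_\ep u^0\cdot \nabla w + E_\ep u^0\, w .
\]
Since $\nabla E_\ep u^0 = (u^0_x,0)$, the second integral only involves the first row of $A^\ep$. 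Its leading part is $\int A_{01}u^0_x w_x$, and the corrections $\sum_{k\ge1}\ep^k(A_{k1}u^0_x w_x + A_{k2}^T u^0_x\cdot\nabla_y w)$ are bounded by $C\ep\|u^0_x\|_{L^2_a}\ep^{n/2}\|w\|_{H^1}$. (One has to track how the mixed block scales in the original variables; I would take $A^\ep(x,y/\ep)$ entries uniformly bounded, so these are $O(\ep)$ by \eqref{A3}.)

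The key step is to split $w = E_\ep M_\ep w + (w - E_\ep M_\ep w)$. Fubini gives
\[
\int_{R^\ep} A_{01}u^0_x\,\partial_x(E_\ep M_\ep w) + u^0 M_\ep w - f^\ep E_\ep M_\ep w = \ep^n\Big(\int_0^1 \tilde A u^0_x (M_\ep w)_x + |B_1|a^n (u^0-M_\ep f^\ep)M_\ep w\Big),
\]
which vanishes by \eqref{eq:limite} with $\varphi = M_\ep w\in H^1_a(0,1)$ (Lemma \ref{teo21}). What remains are terms involving $v := w - E_\ep M_\ep w$:
\[
\int_{R^\ep} f^\ep v - u^0 v - A_{01}u^0_x\,\partial_x v .
\]
The zero-order terms are controlled by Lemma \ref{lemma22}: $\|v\|_{L^2(R^\ep)}\le \gamma^{1/2}\ep\|\nabla_y w\|_{L^2(R^\ep)}$, giving $C\ep(\|f^\ep\|+\|u^0\|)\|w\|_{H^1}$.

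The main obstacle is $\int A_{01}u^0_x\,\partial_x v$, since $\partial_x v$ is not small a priori. First I would note $\int_{\ep a(x)B_1} v\,dy = 0$ for each $x$. Then I would integrate by parts in $x$ through the identity $\partial_x\int_{\ep a B_1} v\,dy = \int_{\ep aB_1} \partial_x v\,dy + (\text{boundary flux involving } \ep a'(x)\text{ and } v \text{ on } \partial(\ep a B_1))$, which is Lemma \ref{teo21} written in the original variables. With this, $\int_{\ep aB_1}\partial_x v = -\int_{B_1}\nabla_y v(x,\ep a z)\cdot \ep a' z\, a^n\ep^n\,dz$, a bulk expression of size $\ep\|a'\|_\infty\|\nabla_y w\|$ per section, since $\nabla_y v=\nabla_y w$. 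Because $A_{01}u^0_x$ depends only on $x$,
\[
\Big|\int_{R^\ep} A_{01}u^0_x\,\partial_x v\Big| \le C\ep\,\|a'\|_\infty\,\|E_\ep(u^0_x)\|_{L^2(R^\ep)}\|\nabla_y w\|_{L^2(R^\ep)} .
\]
This uses that $a\in C^1([0,1])$, so $a'$ is bounded up to the peak. (This is where the degeneracy at $x=0$ must be checked; $u^0_x$ may blow up there, but it lies in the weighted space, which is exactly $\|E_\ep u^0_x\|_{L^2(R^\ep)}$.)

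Finally, I would collect the terms using the a priori bound $\|E_\ep u^0\|_{H^1(R^\ep)} = (\ep^n|B_1|)^{1/2}\|u^0\|_{H^1_a}\le C\|f^\ep\|_{L^2(R^\ep)}$, which follows from testing \eqref{eq:limite} with $u^0$ and using $\|M_\ep f^\ep\|_{L^2_a}\le (\ep^n|B_1|)^{-1/2}\|f^\ep\|_{L^2(R^\ep)}$ by Jensen. Every term is then $\le C\ep\|f^\ep\|_{L^2(R^\ep)}\|w\|_{H^1(R^\ep)}$, and dividing by $\|w\|$ yields \eqref{teo23}.
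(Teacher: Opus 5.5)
Your proposal is correct and is essentially the paper's proof: you test the limit problem \eqref{eq:limite} with the same function $M_\varepsilon u^\varepsilon - u^0$, and your three remainders match the paper's $\tilde I_1$, $\tilde I_3$ and $(*)$ exactly. These are the commutator term $\int A_{01}u^0_x\,\partial_x v$, controlled through Lemma~\ref{teo21} and $a'\in L^\infty$; the zero-order term $\int (f^\varepsilon-u^0)v=\int (f^\varepsilon-M_\varepsilon f^\varepsilon)(u^\varepsilon-M_\varepsilon u^\varepsilon)$, controlled by Lemma~\ref{lemma22}; and the $O(\varepsilon)$ corrections from $A_k$, $k\ge 1$. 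The only difference is framing: the paper reaches the key identity by comparing the minimal energies $\lambda_\varepsilon$ and $\varepsilon^n\tau_\varepsilon$ with $E_\varepsilon u^0$ as competitor, which is algebraically equivalent to your direct choice $\varphi=u^\varepsilon-E_\varepsilon u^0$ in \eqref{eq:rep_problema}, and your version avoids the auxiliary $\sum_k\varepsilon^kA_{k1}|u^0_x|^2$ terms that cancel there.
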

\begin{demo}
It is known that the minima

$$\lambda_{\ep} = \min_{\varphi \in H^{1}(R^{\ep})} \left\lbrace \frac{1}{2} \int_{R^{\ep}} (A^{\ep}(x, \tfrac{y}{\varepsilon}) \nabla \varphi) \cdot \nabla \varphi + |\varphi|^{2} dy dx - \int_{R^{\ep}} f^{\ep} \varphi \,dydx \right\rbrace$$       
$$\tau_{\ep} = \min_{\varphi \in H^{1}_{a}(0,1)} \left\lbrace \frac{1}{2} \int_{0}^{1} \tilde{A} (x) |\varphi_{x}|^{2} + a^{n}(x)|B_{1}||\varphi|^{2} dx - |B_{1}|\int_{0}^{1} a^{n}(x) M_{\ep} f^{\ep} \varphi dx \right\rbrace$$
are unique and attained at $u^{\ep}$ and $u^{0}$ respectively. We want to compare both solutions $u^{\ep}$ and $u^{0}$. Since $E_\ep u^{0} \in H^{1}(R^{\ep})$, we have

\begin{eqnarray*}
\hspace{-1cm}\lambda_{\ep} & \leq & \frac{1}{2} \int_{R^{\ep}}(A^{\ep}(x, \tfrac{y}{\varepsilon}) \nabla u^{0}) \cdot \nabla u^{0} + |u^{0}|^{2}dydx - \int_{R^{\ep}} f^{\ep} u^{0} \, dydx\\
& = & \frac{1}{2} \int_0^1 \ep^{n}|B_1|a^n\Big(A_{01}|u_{x}^{0}|^{2} + |u^{0}|^{2} + \sum_{k = 1}^{\infty} \ep^{k} A_{k1} |u_{x}^{0}|^{2}\Big)dx - \int_{R^{\ep}} f^{\ep} u^{0} \, dydx \\
& = & \left(\frac{1}{2}  \int_{0}^{1} \ep^{n}|B_1|a^n\Big(A_{01} |u_{x}^{0}|^{2} + |u^{0}|^{2}\Big) dx - \int_{0}^{1} \ep^{n}|B_1|a^nu^{0} M_{\ep} f^{\ep} dx \right) + \frac{1}{2} \int_{R^\ep} \sum_{k = 1}^{\infty} \ep^{k} A_{k1} |u_{x}^{0}|^{2} dy dx\\
& = & \ep^{n} \tau_{\ep} + \frac{1}{2} \int_{R^\ep} \sum_{k = 1}^{\infty} \ep^{k} A_{k1} |u_{x}^{0}|^{2}dydx.
\end{eqnarray*}

Hence, we obtain the following estimate
\begin{equation} \label{limsupautovalor}
   \lambda_{\ep} \leq \ep^{n} \tau_{\ep} + \frac{1}{2} \int_{R^\ep} \sum_{k = 1}^{\infty} \ep^{k} A_{k1} |u_{x}^{0}|^{2}dydx.   
\end{equation}

To obtain a lower estimate, we proceed as follows: we consider the decomposition $u^{\ep} = z^{\ep} + u^{0}$ where $z^{\ep} = u^{\ep} - u^{0}$. This yields

\begin{eqnarray*}
\lambda_{\ep} & = & \frac{1}{2} \int_{R^{\ep}} \ A^{\ep}(x, \tfrac{y}{\varepsilon}) \nabla (z^{\ep} + u^{0}) \cdot \nabla (z^{\ep} + u^{0}) + (z^{\ep} + u^{0})^2 \, dxdy- \int_{R^{\ep}} f^{\ep} (z^{\ep} + u^{0}) \, dydx \\
& = & \frac{1}{2} \int_{R^{\ep}} A^{\ep}(x, \tfrac{y}{\varepsilon}) \nabla z^{\ep} \cdot \nabla z^{\ep} + 2 A^{\ep}(x, \tfrac{y}{\varepsilon}) \nabla z^{\ep} \cdot \nabla u^{0} + A^{\ep}(x, \tfrac{y}{\varepsilon}) \nabla u^{0} \cdot \nabla u^{0}  + |z^{\ep}|^{2} + 2 z^{\ep}  u^{0}  + |u^{0}|^{2} \, dxdy\\
&  & - \int_{R^{\ep}} f^{\ep} z^{\ep} dxdy- \int_{R^{\ep}} f^{\ep} u^{0} dxdy \\
& = & \frac{1}{2} \int_{R^{\ep}} A^{\ep}(x, \tfrac{y}{\varepsilon}) \nabla (u^{\ep} - u^{0}) \cdot \nabla (u^{\ep} - u^{0}) + |u^{\ep} - u^{0}|^{2} dxdy + I_1 + I_2 - I_3 + \ep^n \tau_{\ep} \\
& & + \frac{1}{2} \int_{R^\ep} \sum_{k = 1}^{\infty} \ep^{k} A_{k1} |u_{x}^{0}|^{2} dxdy
\end{eqnarray*}

where
\begin{equation*}
    I_{1} := \int_{R^{\ep}} A^{\ep}(x, \tfrac{y}{\varepsilon}) \nabla (u^{\ep} - u^{0}) \cdot \nabla u^{0} dxdy, \ \  I_{2} := \int_{R^{\ep}} (u^{\ep} - u^{0}) u^{0} dxdy 
    \   \textrm{ and } \   I_{3} :=  \int_{R^{\ep}} f^{\ep} (u^{\ep} - u^{0}) dxdy. 
\end{equation*}

If we examine each term in detail, we observe the following

\begin{eqnarray*}
 I_{1} & = & 
  \int_{R^\ep} \Big( A_{01} u_{x}^{\ep} u_{x}^{0} + \!\sum_{k = 1}^{\infty} \ep^{k} A_{k1} u_{x}^{\ep} u_{x}^{0} + \!\sum_{k = 1}^{\infty} \ep^{k - 1} A_{k2} \nabla_{y} u^{\ep} u_{x}^{0} \Big) dxdy \\ 
 & -& \int_{R^\ep} \Big( A_{01} (u_{x}^{0})^{2} +\!\sum_{k = 1}^{\infty} \ep^{k} A_{k1} (u_{x}^{0})^{2} \Big) dxdy.
\end{eqnarray*}

Note that

\begin{eqnarray*}
 \int_{R^\ep} A_{01} u_{x}^{\ep} u_{x}^{0} & = & \int_{0}^{1} u_{x}^{0} \frac{ \ep^{n} a^{n} |B_{1}|}{\ep^{n} a^{n} |B_1|} \int_{\ep a(x) B_{1}} A_{01} u_{x}^{\ep} \, dydx  \ = \int_{R^{\ep}} M_{\ep} (A_{01} u_{x}^{\ep}) u_{x}^{0} \, dydx.
\end{eqnarray*}
Thus,
\begin{eqnarray*}
    I_{1} = \int_{R^{\ep}} (M_{\ep} (A_{01} u_{x}^{\ep}) - A_{01} u_{x}^{0}) u_{x}^{0} \, dydx + (*)
\end{eqnarray*}
with
\begin{equation*}
    (*) = \int_{R^\ep} \Big(\sum_{k = 1}^{\infty} \ep^{k} A_{k1} u_{x}^{\ep} u_{x}^{0} + \sum_{k = 1}^{\infty} \ep^{k - 1} A_{k2} \nabla_{y} u^{\ep} u_{x}^{0} - \sum_{k = 1}^{\infty} \ep^{k} A_{k1} (u_{x}^{0})^{2}\Big)dydx  .
\end{equation*}

Let us now add and subtract $A_{01} (M_{\ep} u^{\ep})_{x} u^{0}_{x}$ from the first term in $I_{1}$:
\begin{eqnarray*}
    I_{1} & = & \int_{R^{\ep}} (M_{\ep} (A_{01} u_{x}^{\ep}) - A_{01} (M_{\ep} u^{\ep})_{x}) u_{x}^{0} \, dydx + \int_{R^{\ep}} (A_{01} (M_{\ep} u^{\ep})_{x} - A_{01} u_{x}^{0}) u_{x}^{0} \, dydx + (*) \\
    & = & \int_{R^{\ep}} (M_{\ep} (A_{01} u_{x}^{\ep}) - A_{01} (M_{\ep} u^{\ep})_{x}) u_{x}^{0} \, dydx + \ep^{n} \int_{0}^{1}  \tilde{A}(x) (M_{\ep} u^{\ep} - u^{0})_{x} u_{x}^{0} dx + (*) \\
    & = & \tilde{I}_{1} +\ep^{n} \int_{0}^{1}  \tilde{A}(x) (M_{\ep} u^{\ep} - u^{0})_{x} u_{x}^{0} dx + (*).
\end{eqnarray*}

As for $I_{2}$ and $I_{3}$,
\begin{equation*}
    I_{2} = \ep^{n} |B_{1}| \int_{0}^{1}  a^{n} (M_{\ep} u^{\ep} - u^{0}) u^{0} dx
\end{equation*}
    and
\begin{eqnarray*}
    I_{3} & = &\int_{R^{\ep}} (f^{\ep} - M_{\ep} f^{\ep}) (u^{\ep} - u^{0}) \, dydx + \ep^{n} |B_{1}| \int_{0}^{1} a^{n} M_{\ep} f^{\ep} (M_{\ep} u^{\ep} - u^{0}) dx \\
    & = & \tilde{I}_{3} + \ep^{n} |B_{1}| \int_{0}^{1} a^{n} M_{\ep} f^{\ep} (M_{\ep} u^{\ep} - u^{0}) dx.
\end{eqnarray*}

Since $u^{0}$ is the solution of the limit problem and $M_{\ep} u^{\ep} \in H^{1}_{a} (0,1)$, we have
\begin{equation*}
    \int_{0}^{1} \tilde{A} (x) (M_{\ep} u^{\ep} - u^{0})_{x} u_{x}^{0} + |B_{1}|a^{n} (M_{\ep} u^{\ep} - u^{0}) u^{0} = |B_{1}| \int_{0}^{1} a^{n} M_{\ep} f^{\ep} (M_{\ep} u^{\ep} - u^{0}) dx.
\end{equation*}

Consequently, from the previous identities, $ I_{1} - \tilde{I}_{1} - (*) + I_{2} = I_{3} - \tilde{I}_{3}.$ Therefore, it follows that
\begin{equation*}
    I_{1} + I_{2} - I_{3} = \tilde{I}_{1} - \tilde{I}_{3} + (*).
\end{equation*}

Substituting this relation into the expression for $\lambda_\ep$, we obtain
\begin{equation} \label{expautovalor}
\begin{gathered}
     \lambda_{\ep} =   \frac{1}{2} \int_{R^{\ep}} A^{\ep} \nabla (u^{\ep} - u^{0}) \cdot \nabla (u^{\ep} - u^{0})  + |u^{\ep} - u^{0}|^{2} dy dx  \\
\qquad \qquad + \tilde{I}_{1} - \tilde{I}_{3} + (*) + \ep^{n} \tau_{\ep} + \frac{1}{2} \int_{R^\ep} \sum_{k = 1}^{\infty} \ep^{k} A_{k1} |u_{x}^{0}|^{2}dydx.
\end{gathered}
\end{equation}

\noindent So, we only need to estimate $\tilde{I}_1$, $\tilde{I}_3$, and the remainder term $(*)$.

\smallskip

By adding and subtracting the term $A_{01} M_{\varepsilon} u^\varepsilon_x$ within $\tilde{I}_1$,
\begin{align*}
\tilde{I}_1 
&= \int_{R^\varepsilon} \left( M_{\varepsilon}(A_{01} u^\varepsilon_x) - A_{01} M_{\varepsilon} u^\varepsilon_x + A_{01} M_{\varepsilon} u^\varepsilon_x - A_{01}(M_{\varepsilon} u^\varepsilon)_{x} \right) u^0_x \, dy dx \\
&= \int_{R^\varepsilon} A_{01} \left( M_{\varepsilon} u^\varepsilon_x - (M_{\varepsilon} u^\varepsilon)_{x} \right) u^0_x \, dy dx.
\end{align*}

\medskip

From Lemma \ref{teo21}, we know that
\begin{equation}\label{eq:313}
    (M_\varepsilon u)_x(x) = \frac{1}{|B_1|} \left( \int_{B_1} u_x(x, \varepsilon l_x(z)) \, dz + \int_{B_1} \nabla_y u(x, \varepsilon l_x(z)) \cdot \varepsilon a'(x) z \, dz \right).
\end{equation}

For the first integral in \eqref{eq:313}, we have
\[
\frac{1}{|B_1|} \int_{B_1} u^\varepsilon_x(x, \varepsilon l_x(z)) \, dz = \frac{1}{\varepsilon^n a^n(x) |B_1|} \int_{\varepsilon a(x) B_1} u^\varepsilon_x(x, y) \, dy = M_\varepsilon u^\varepsilon_x,
\]

and for the second integral in \eqref{eq:313}, since \( \|a'(x) z\| \leq C_4\) for all \( z \in B_1 \),
\[
\left| \int_{B_1} \nabla_y u^\varepsilon(x, \varepsilon l_x(z)) \cdot \varepsilon a'(x) z \, dz \right|
\leq C_4 \,\varepsilon \int_{B_1} |\nabla_y u^\varepsilon(x, \varepsilon l_x(z))| \, dz.
\]

Changing the variables back to \( y = \varepsilon a(x) z \), we obtain
\[
\left| \frac{1}{|B_1|} \int_{B_1} \nabla_y u^\varepsilon(x, \varepsilon l_x(z)) \cdot \varepsilon a'(x) z \, dz \right|
\leq \frac{C_4 \varepsilon}{|B_1| \varepsilon^n a^n(x)} \int_{\varepsilon a(x) B_1} |\nabla_y u^\varepsilon(x, y)| \, dy.
\]

Therefore,
\[
\left| M_\varepsilon u^\varepsilon_x - (M_\varepsilon u^\varepsilon)_x \right|
\leq \frac{C_4 \varepsilon}{|B_1| \varepsilon^n a^n(x)} \int_{\varepsilon a(x) B_1} |\nabla_y u^\varepsilon(x, y)| \, dy.
\]

Thus,
%
\begin{align*}
|\tilde{I}_1| & \leq \int_0^1 C_4 |A_{01}(x)| \varepsilon \int_{\varepsilon a(x) B_1} |\nabla_y u^\varepsilon| |u^0_x| \, dy dx.
\end{align*}

Finally, applying Hölder's inequality, we obtain
\begin{equation}\label{tildeI1}
|\tilde{I}_1| \leq C_4 \|A_{01}\|_{L^\infty(0,1)} \varepsilon \|\nabla_y u^\varepsilon\|_{L^2(R^\varepsilon)} \|u^0_x\|_{L^2(R^\varepsilon)}.
\end{equation}

And $\tilde{I}_3$ can be estimated as follows
\begin{align*}
    \tilde{I}_3 &= \int_{R^\varepsilon} (f_\varepsilon - M_\varepsilon f_\varepsilon)(u^\varepsilon - M_\varepsilon u^\varepsilon)\,dydx 
    + \underbrace{
\int_{R^\varepsilon} (f_\varepsilon - M_\varepsilon f_\varepsilon)(M_\varepsilon u^\varepsilon - u^0) \, dxdy
}_{= 0}\\
    &= \int_{R^\varepsilon} (f_\varepsilon - M_\varepsilon f_\varepsilon)(u^\varepsilon - M_\varepsilon u^\varepsilon)\,dydx.
\end{align*}

By Hölder's inequality, we have
\begin{equation*}
    |\tilde{I}_3| \leq \|f_\varepsilon - M_\varepsilon f_\varepsilon\|_{L^2(R^\varepsilon)} \|u^\varepsilon - M_\varepsilon u^\varepsilon\|_{L^2(R^\varepsilon)}.
\end{equation*}

By Lemma \ref{lemma22}, $\|u^\ep - E_\ep M_\ep u^\ep\|^2_{L^2(R^\ep)} \leq \gamma \ep^2 \|\nabla_y u^\ep\|^2_{L^2(R^\ep)}.$ Consequently, 
\begin{equation} \label{tildeI3}
    |\tilde{I}_{3}| \leq \ep\, C_{9} || f^{\ep} - M_{\ep} f^{\ep} ||_{L^{2}(R^{\ep})} || \nabla_{y} u^{\ep} ||_{L^{2}(R^{\ep})},
\end{equation} where $C_9=\gamma^{\frac{1}{2}}.$
\

At this point, we need to estimate $|| u_{x}^{0} ||_{L^{2}(R^{\ep})}$ and $|| \nabla_{y} u^{\ep} ||_{L^{2}(R^{\ep})}$. Note that
\begin{equation} \label{nablayuep}
    || \nabla_{y} u^{\ep} ||_{L^{2}(R^{\ep})}^{2} 
   \leq \int_{R^{\ep}} | \nabla_{y} u^{\ep} - \nabla_{y} u^{0} |^{2} + | \nabla_{y} u^{0} |^{2} \, dydx \ \ \leq || \nabla u^{\ep} - \nabla u^{0} ||_{L^{2}(R^{\ep})}
\end{equation}

For $|| u_{x}^{0} ||_{L^{2}(R^{\ep})}$, since $u^{0}$ satisfies the limit problem,
\begin{eqnarray*}
    \int_{0}^{1} \tilde{A}(x) (u_{x}^{0})^{2} dx + |B_{1}|\int_{0}^{1} a^{n} (u^{0})^{2} dx & = & |B_{1}| \int_{0}^{1} a^{n} M_{\ep} f^{\ep} u^{0} dx \\
    & \leq & |B_{1}| \left( \int_{0}^{1} a^{n} (M_{\ep} f^{\ep})^{2} dx \right)^{\frac{1}{2}} \left( \int_{0}^{1} a^{n} (u^{0})^{2} dx \right)^{\frac{1}{2}}\\
    & \leq & |B_{1}| \left( \frac{1}{4 \delta}\int_{0}^{1} a^{n} (M_{\ep} f^{\ep})^{2} dx +  \delta \int_{0}^{1} a^{n}(u^{0})^{2} dx \right)
\end{eqnarray*}
\\
\noindent where $\delta$ is a constant that arises from the application of Young's inequality. Setting $\delta = \frac{1}{2}$ and grouping the terms in the previous inequality, we obtain

\begin{equation*}
    \frac{1}{2} \left( \int_{0}^{1} \tilde{A}(x) (u_{x}^{0})^{2} dx + |B_{1}| \int_{0}^{1} a^{n} (u^{0})^{2} dx \right) \leq \frac{|B_{1}|}{2} \int_{0}^{1} a^{n} (M_{\ep} f^{\ep})^{2} dx.
\end{equation*}
\\
\noindent Now, since $\int_{B_{1}} A_{01} dy  = A_{01}(x) |B_1| > 0$, we have
\begin{equation*}
    \min_{x \in [0,1]} \left( \int_{B_{1}} A_{01} dy \right) \int_{0}^{1} a^{n} (u_{x}^{0})^{2} \leq |B_{1}| ||M_{\ep} f^{\ep} ||^{2}_{L^{2}_{a}(0,1)}.
\end{equation*}
\noindent Moreover, since $|| u_{x}^{0} ||^{2}_{L^{2}(R^{\ep})} = \ep^{n} |B_{1}| \int_{0}^{1} a^{n} (u_{x}^{0})^{2} dx,$ it follows that
\begin{equation*}
    || u_{x}^{0} ||^{2}_{L^{2}(R^{\ep})} \leq \frac{\ep^{n} |B_{1}|^{2}}{C_{5}} ||M_{\ep} f^{\ep} ||^{2}_{L^{2}_{a}(0,1) }
\end{equation*}
\noindent where $C_{5} = \min\limits_{x \in [0,1]} \left( \int_{B_{1}} A_{01} dy \right)$. Since $|| M_{\ep} f^{\ep} ||^{2}_{L^{2}_{a}(0,1)} \leq \frac{\ep^{-n}}{|B_{1}|} ||f^{\ep} ||^{2}_{L^{2}(R^{\ep})}$, we obtain

\begin{equation} \label{ux0}
    || u_{x}^{0} ||_{L^{2}(R^{\ep})} \leq \Big(\frac{|B_{1}|}{C_{5}}\Big)^{\frac{1}{2}}||f^{\ep} ||_{L^{2}(R^{\ep})} = C_{6} ||f^{\ep} ||_{L^{2}(R^{\ep})}.
\end{equation}
\\
Combining the estimates \eqref{nablayuep} and \eqref{ux0} with \eqref{tildeI1} and \eqref{tildeI3}, we derive the following bounds.

For $\tilde{I}_{1}$, we obtain
\begin{eqnarray} \label{tildei1fim}
    |\tilde{I}_{1}| & \leq & C_8 \, \varepsilon \, \| \nabla u^{\varepsilon} - \nabla u^{0} \|_{L^2(R^{\varepsilon})} \, \| f^{\varepsilon} \|_{L^2(R^{\varepsilon})} \nonumber \\
    & \leq & \frac{3}{\alpha} C_8^2 \varepsilon^2 \| f^{\varepsilon} \|_{L^2(R^{\varepsilon})}^2 + \frac{\alpha}{12} \| \nabla u^{\varepsilon} - \nabla u^{0} \|_{L^2(R^{\varepsilon})}^2,
\end{eqnarray}
where \( C_8 = C_4 C_6 \| A_{01} \|_{L^\infty(0,1)} \), and we applied Young's inequality with parameter \( \frac{3}{\alpha} \), where $\alpha > 0$ denotes the coercivity constant of the matrix $A^\ep$ (see \eqref{coer}).

Similarly, for $\tilde{I}_{3}$, we have
\begin{eqnarray} \label{tildei3fim}
    |\tilde{I}_{3}| & \leq & C_9 \, \varepsilon \, \| f^{\varepsilon} - M_{\varepsilon} f^{\varepsilon} \|_{L^2(R^{\varepsilon})} \, \| \nabla_y u^{\varepsilon} \|_{L^2(R^{\varepsilon})} \nonumber \\
    & \leq & \frac{3}{\alpha}\, C_9^2\, \varepsilon^2 \| f^{\varepsilon} - M_{\varepsilon} f^{\varepsilon} \|_{L^2(R^{\varepsilon})}^2 + \frac{\alpha}{12} \| \nabla u^{\varepsilon} - \nabla u^{0} \|_{L^2(R^{\varepsilon})}^2.
\end{eqnarray}

It remains to estimate the remainder term $(*)$ appearing in the expression of $I_{1}$. We begin by bounding each component separately.

\begin{eqnarray*}
    \left|\int_{R^\ep} \sum_{k = 1}^{\infty} \ep^{k} A_{k1} (u_{x}^{\ep} - u_{x}^{0})u_{x}^{0} \right| & \leq &  \sum_{k = 1}^{\infty} \ep^{k}  \left| \int_{R^{\ep}}  A_{k1} (u_{x}^{\ep} - u_{x}^{0})u_{x}^{0} \right| \\
    & \leq & \frac{\ep}{1 - \ep} \sup_{k, \ep} ||A_{k1}||_{L^{2}(R^{\ep})}  || u_{x}^{\ep} - u_{x}^{0} ||_{L^{2}(R^{\ep})}  ||u_{x}^{0}||_{L^{2}(R^{\ep})}  \\
    & \leq & C_{7} \ep || \nabla u^{\ep} - \nabla u^{0} ||_{L^{2}(R^{\ep})} C_{6} ||f^{\ep}||_{L^{2}(R^{\ep})} \\
    & \leq & \frac{3}{\alpha} C_{7}^{2} \ep^{2} ||f^{\ep} ||^{2}_{L^{2}(R^{\ep})} + \frac{\alpha}{12} || \nabla u^{\ep} - \nabla u^{0} ||^{2}_{L^{2}(R^{\ep})} 
\end{eqnarray*}
and
\begin{eqnarray*}
    \left|\int_{R^\ep}\sum_{k = 1}^{\infty} \ep^{k-1} A_{k2} \nabla_{y} u^{\ep} u_{x}^{0} \right| & \leq &  \left| \sum_{k = 1}^{\infty} \ep^{k-1} \int_{R^{\ep}} A_{k2} \nabla_{y} u^{\ep} u_{x}^{0} \right|  \\
    & \leq &  \frac{\ep}{1 - \ep} \sup_{k, \ep} ||A_{k2}||_{L^{2}(R^{\ep})} || \nabla_{y} u^{\ep} ||_{L^{2}(R^{\ep})} ||u_{x}^{0}||_{L^{2}(R^{\ep})} \\
    & \leq & C_{8} \ep || \nabla u^{\ep} - \nabla u^{0} ||_{L^{2}(R^{\ep})} C_{6} ||f^{\ep}||_{L^{2}(R^{\ep})} \\
    & \leq & \frac{3}{\alpha} C_{11}^{2} \ep^{2} ||f^{\ep}||_{L^{2}(R^{\ep})} + \frac{\alpha}{12} || \nabla u^{\ep} - \nabla u^{0} ||^{2}_{L^{2}(R^{\ep})}  
\end{eqnarray*}
\noindent where Young's inequality was applied with the constant $\frac{3}{\alpha}$. Thus, we obtain
\begin{equation} \label{*}
    |(*)| \leq \frac{3}{\alpha}\ep^{2}(C_{7}^{2} + C_{11}^{2}) ||f^{\ep}||^{2}_{L^{2}(R^{\ep})} + \frac{2 \alpha}{12} || \nabla u^{\ep} - \nabla u^{0} ||^{2}_{L^{2}(R^{\ep})}  
\end{equation}
From (\ref{tildei1fim}), (\ref{tildei3fim}), and (\ref{*}), we derive
\begin{equation}\label{eq:318}
    |\tilde{I}_{1} - \tilde{I}_{3} + (*)| \leq \frac{3}{\alpha} ( C_{7}^{2} + C_{8}^{2} + C_{11}^{2}) \ep^{2} ||f^{\ep}||^{2}_{L^{2}(R^{\ep})} + \frac{\alpha}{3} || \nabla u^{\ep} - \nabla u^{0} ||^{2}_{L^{2}(R^{\ep})} + \frac{3}{\alpha} C_{9}^{2} \ep^{2} ||f^{\ep}  - M_{\ep}f^{\ep}||^{2}_{L^{2}(R^{\ep})}
\end{equation}
\\
Then, using \eqref{limsupautovalor}, \eqref{eq:318} and the uniform coercivity of the matrix $A^\varepsilon$ in \eqref{expautovalor}, we obtain

\begin{eqnarray*}
    \lambda_{\ep} 
    & \geq & \frac{\alpha}{2} \int_{R^{\ep}} \nabla (u^{\ep} - u^{0}) \cdot \nabla (u^{\ep} - u^{0})  + |u^{\ep} - u^{0}|^{2} dy dx - \frac{3}{\alpha} ( C_{7}^{2} + C_{8}^{2} + C_{11}^{2}) \ep^{2} ||f^{\ep}||^{2}_{L^{2}(R^{\ep})} \\
    & - & \frac{\alpha}{3} || \nabla u^{\ep} - \nabla u^{0} ||^{2}_{L^{2}(R^{\ep})} - \frac{3}{\alpha} C_{9}^{2} \ep^{2} ||f^{\ep}  - M_{\ep}f^{\ep}||^{2}_{L^{2}(R^{\ep})}  \\
    & \geq & \frac{\xi}{6} \int_{R^{\ep}} \nabla (u^{\ep} - u^{0}) \cdot \nabla (u^{\ep} - u^{0}) + |u^{\ep} - u^{0}|^{2} dy dx -\frac{3}{\alpha} ( C_{7}^{2} + C_{8}^{2} + C_{11}^{2}) \ep^{2} ||f^{\ep}||^{2}_{L^{2}(R^{\ep})} \\
    & - & \frac{3}{\alpha} C_{9}^{2} \ep^{2} ||f^{\ep}  - M_{\ep}f^{\ep}||^{2}_{L^{2}(R^{\ep})},
\end{eqnarray*}
where $\xi=\min\{\al, 3\}$, which yields
\begin{eqnarray*}
    \frac{\xi}{6} ||u^{\ep} - E_{\ep}u^{0} ||^{2}_{H^{1}(R^{\ep})} & \leq & \frac{3}{\alpha} ( C_{7}^{2} + C_{8}^{2} + C_{11}^{2}) \ep^{2} ||f^{\ep}||^{2}_{L^{2}(R^{\ep})} + \frac{3}{\alpha} C_{9}^{2}  \ep^{2} 4 ||f^{\ep}||^{2}_{L^{2}(R^{\ep})} \\
    & \leq & \frac{3}{\alpha} ( C_{7}^{2} + C_{8}^{2} + (2C_{9})^{2} + C_{11}^{2}) \ep^{2} ||f^{\ep}||^{2}_{L^{2}(R^{\ep})}
\end{eqnarray*}
\noindent where $|| f^{\ep} - M_{\ep} f^{\ep} ||_{L^{2}(R^{\ep})} \leq 2 ||f^{\ep}||_{L^{2}(R^{\ep})}$.
\

So,
\begin{equation*}
    ||u^{\ep} - E_{\ep}u^{0} ||^{2}_{H^{1}(R^{\ep})} \leq \frac{18}{\xi^{2}} ( C_{7}^{2} + C_{8}^{2} + (2C_{9})^{2} + C_{11}^{2}) \ep^{2} ||f^{\ep}||^{2}_{L^{2}(R^{\ep})}.
\end{equation*}
\noindent
Therefore, we conclude that
\begin{equation*}
    \|u^{\varepsilon} - E_{\varepsilon} u^{0} \|_{H^{1}(R^{\varepsilon})} 
    \leq C \varepsilon \|f^{\varepsilon} \|_{L^{2}(R^{\varepsilon})},
\end{equation*}
\noindent
where $C= (\frac{18}{\xi^{2}} ( C_{7}^{2} + C_{8}^{2} + (2C_{9})^{2} + C_{11}^{2}))^{\frac{1}{2}}$.\fimdemo
\end{demo}
\begin{remark}
    We note that the above result follows an approach that strongly depends on the boundedness of the derivative $a'$. Although similar convergence properties can be obtained under weaker assumptions, the explicit rate of convergence derived through this method requires the regularity condition $a' \in L^\infty(0,1)$.
\end{remark}

To compare the norms of the functions defined in the thin domain \( R^\varepsilon \) and in the fixed domain \( \Omega  \), we introduce the scaling operator
\[
i_\varepsilon : H^1(R^\varepsilon) \to H^1(\Omega), \quad i_\varepsilon w(x,y) := w(x, \varepsilon y).
\]
A straightforward change of variables yields the following norm equivalences:
\begin{equation*} \label{normi}
    \| i_\varepsilon w \|_{L^2(\Omega)}^2 = \varepsilon^{-n} \| w \|_{L^2(R^\varepsilon)}^2,
\quad
\| i_\varepsilon w \|_{H^1_\varepsilon(\Omega)}^2 = \varepsilon^{-n} \| w \|_{H^1(R^\varepsilon)}^2.
\end{equation*}

Note that if \( w^\varepsilon \) is a solution of (\ref{eq:rep_problema}), then \( u^\varepsilon = i_\varepsilon w^\varepsilon \) is a solution of
\[
   \int_\Omega A^\varepsilon\left(x, y\right) \nabla^\ep u^\varepsilon \cdot \nabla^\ep \varphi + u^\varepsilon \varphi\, dxdy = \int_\Omega g^\ep\varphi\, dxdy, \ \ \ \forall \varphi \in H^1(\Omega),
\]
where $i_\ep f^\ep=g^\ep$. Therefore, proving \eqref{teo23} is equivalent to showing that
\[
\|u^\varepsilon - E v^\varepsilon\|_{H^1_\varepsilon(\Omega)} \leq C \varepsilon \|g^\varepsilon\|_{L^2(\Omega)},
\]
where \( v^\varepsilon \) satisfies
\[
  \begin{cases}
    - \dfrac{1}{a^n(x)}\left( a^n(x) A_{01}(x) v^\varepsilon_x \right)_x + v^\varepsilon = M g^\varepsilon, & x \in (0,1) \\
    \lim\limits_{x \to 0^+}a^n(x) v^\varepsilon_x(x) = 0, \\
    v^\varepsilon_x(1) = 0.
  \end{cases}
\]


%

\begin{corollary}\label{cor:conresolv}
Let \( f^\varepsilon \in L^2(\Omega) \). Then the following estimate holds:
\[
\left\| L_\varepsilon^{-1} f^\varepsilon - E L_0^{-1} M f^\varepsilon \right\|_{H^1_\ep(\Omega)} 
\leq C \varepsilon \left\| f^\varepsilon \right\|_{L^2(\Omega)}.
\]
\end{corollary}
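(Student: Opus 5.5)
The corollary should be a direct translation of Theorem \ref{Theo23} to the fixed domain $\Omega$, using the scaling operator $i_\varepsilon$ and the norm identities $\|i_\varepsilon w\|_{L^2(\Omega)}^2=\varepsilon^{-n}\|w\|_{L^2(R^\varepsilon)}^2$ and $\|i_\varepsilon w\|_{H^1_\varepsilon(\Omega)}^2=\varepsilon^{-n}\|w\|_{H^1(R^\varepsilon)}^2$ stated just before it.

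Given $f^\varepsilon\in L^2(\Omega)$, set $h^\varepsilon:=i_\varepsilon^{-1}f^\varepsilon\in L^2(R^\varepsilon)$, that is, $h^\varepsilon(x,y)=f^\varepsilon(x,y/\varepsilon)$. Let $w^\varepsilon\in H^1(R^\varepsilon)$ solve \eqref{eq:rep_problema} with right-hand side $h^\varepsilon$. The change of variables discussed after Theorem \ref{Theo23} shows that $u^\varepsilon:=i_\varepsilon w^\varepsilon$ solves the weak problem on $\Omega$ with datum $f^\varepsilon$. This problem is precisely the variational formulation of $L_\varepsilon u=f^\varepsilon$, because the form $b_\varepsilon(u,\varphi)+\int_\Omega u\varphi$ is the form of $L_\varepsilon$ and the Neumann condition is natural. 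Lax--Milgram uniqueness then gives $u^\varepsilon=L_\varepsilon^{-1}f^\varepsilon$.

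Next we identify the limit datum. The averaging operators satisfy
\[
M_\varepsilon h^\varepsilon(x)=\frac{1}{\varepsilon^n a^n|B_1|}\int_{\varepsilon a(x)B_1} f^\varepsilon(x,y/\varepsilon)\,dy=Mf^\varepsilon(x).
\]
Hence the solution $u^0$ of \eqref{eq:limite} satisfies, after dividing by $|B_1|$, the weak form of $L_0u^0=Mf^\varepsilon$, so $u^0=L_0^{-1}Mf^\varepsilon$. Here we need $Mf^\varepsilon\in L^2_a(0,1)$, which follows from Jensen's inequality: $\|Mf^\varepsilon\|_{L^2_a}^2\le |B_1|^{-1}\|f^\varepsilon\|_{L^2(\Omega)}^2$. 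We must also check that the weak solution in $H^1_a(0,1)$ coincides with the element of $D(L_0)$. This holds by the boundary-condition argument given after \eqref{problim}.

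Since $i_\varepsilon E_\varepsilon u^0=Eu^0$, Theorem \ref{Theo23} gives
\[
\|L_\varepsilon^{-1}f^\varepsilon-EL_0^{-1}Mf^\varepsilon\|_{H^1_\varepsilon(\Omega)}
=\varepsilon^{-n/2}\|w^\varepsilon-E_\varepsilon u^0\|_{H^1(R^\varepsilon)}
\le C\varepsilon\,\varepsilon^{-n/2}\|h^\varepsilon\|_{L^2(R^\varepsilon)}
=C\varepsilon\|f^\varepsilon\|_{L^2(\Omega)}.
\]

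The only delicate point is the $H^1$ norm identity under $i_\varepsilon$. With $A^\varepsilon$-weighted norms, $\nabla w(x,\varepsilon y)$ corresponds exactly to $\nabla^\varepsilon(i_\varepsilon w)$. If instead the plain $H^1(R^\varepsilon)$ norm is used, as in Theorem \ref{Theo23}, the two norms are equivalent with constants depending only on $\alpha$ and $C_0$ from \eqref{coer} and \eqref{A3}. These constants are uniform in $\varepsilon$ and can be absorbed into $C$, so this step is routine bookkeeping rather than a genuine obstacle.
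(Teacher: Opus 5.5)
Your proposal is correct and follows the paper's route: the paper obtains the corollary directly from Theorem \ref{Theo23} through the rescaling $i_\varepsilon$ and the norm identities stated just before the corollary. You have also supplied the details the paper leaves implicit, namely the identity $M_\varepsilon(i_\varepsilon^{-1}f^\varepsilon)=Mf^\varepsilon$, the identification of the weak solutions with $L_\varepsilon^{-1}f^\varepsilon$ and $L_0^{-1}Mf^\varepsilon$, and the uniform-in-$\varepsilon$ equivalence between the $A^\varepsilon$-weighted and plain $H^1$ norms.
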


\demo
This result is a direct consequence of Theorem \ref{Theo23}.
\fimdemo

\begin{remark}
The results established above do not require that the limit operator $L_0$ has compact resolvent.
\end{remark}

\subsection{Compacity of the resolvent of the limit operator}
In this section, our aim is to show that the limit operator has compact resolvent in the appropriate weighted Sobolev space. From now on, we assume the hyphotesis \eqref{H2} so that the proof follows the strategy of \cite{campiti}, adapting the techniques to the weighted framework imposed by the geometry of the domain and the degeneracy introduced by the function $a(\cdot)$.

We introduce the auxiliary subspace
\begin{equation*}
    F_{2} = \Big\{ u \in L^{2}_{a} (0,1) ; \int_{0}^{1} a^{n} u  = 0 \Big\},
    \end{equation*}
    which consists of functions orthogonal to the constant functions in the space $L^{2}_{a}(0,1)$.

\begin{theorem} \label{baraocp}
    Assume \eqref{H2} holds; in particular, suppose $W\in L^{2}_{a}(0,1)$.  Then the operator $\tilde{L}_0|_{F_2}$ is invertible and $( \tilde{L}_0|_{F_2})^{-1}$ is compact, where $\tilde{L}_{0}= - \frac{1}{a^{n}} (A_{01} a^{n} u_{x})_{x}$.
\end{theorem}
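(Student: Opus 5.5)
We will construct the inverse of $\tilde{L}_0|_{F_2}$ explicitly as an integral operator and then show it is Hilbert--Schmidt, hence compact. Given $f\in F_2$, we solve $-(A_{01}a^n u_x)_x = a^n f$ on $(0,1)$ with the boundary conditions $\lim_{x\to0^+}a^n u_x = 0$ and $u_x(1)=0$. Integrating once from $0$ gives the flux
\[
A_{01}(x)a^n(x)u_x(x) = -\int_0^x a^n(s)f(s)\,ds =: -\Phi_f(x).
\]
The left boundary condition is built in, and the condition $\int_0^1 a^n f=0$ (the definition of $F_2$) is exactly what makes $\Phi_f(1)=0$, i.e.\ $u_x(1)=0$. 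Integrating again from $1/2$ and then subtracting the weighted mean, so that $u\in F_2$, determines $u$ uniquely within $F_2$. This gives an explicit formula $u = Gf$. Uniqueness follows because any solution of the homogeneous equation with zero flux at $0$ has $u_x\equiv0$, so it is constant; a constant in $F_2$ must be zero. Hence $\tilde L_0|_{F_2}$ is injective, and the formula gives surjectivity onto $F_2$.

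Next we write $G$ in kernel form. Using Fubini,
\[
u(x)=\int_x^{1/2}\frac{\Phi_f(t)}{A_{01}(t)a^n(t)}\,dt - c_f = \int_0^1 K(x,s)a^n(s)f(s)\,ds - c_f,
\]
where $K(x,s)$ is essentially $W(\max\{x,s\})$ up to sign and cut-off at $1/2$. More precisely, $K(x,s)=W(x)\mathbf 1_{s<x}+W(s)\mathbf 1_{x<s<1/2}$, with the analogous expression on the other side of $1/2$. To verify this, one splits by the position of $s$ relative to $x$ and to $1/2$. On $[1/2,1]$ it is cleaner to use $\Phi_f(t)=-\int_t^1a^nf$, so that the kernel is again controlled by $\int_{1/2}^{\max}$ of $1/(A_{01}a^n)$. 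That quantity is bounded, since $a$ is positive and continuous on $[1/2,1]$ by \eqref{H1}. The mean correction $c_f=\frac{\int a^n Gf}{\int a^n}$ is a rank-one perturbation, so it does not affect compactness.

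The key step, and the main obstacle, is showing that $K\in L^2((0,1)^2;a^n(x)a^n(s)\,dx\,ds)$, which makes the operator Hilbert--Schmidt on $L^2_a(0,1)$. Near $0$ we have $|K(x,s)|\le |W(x)|+|W(s)|+C$, because $W$ is monotone there. This yields
\[
\iint |K|^2a^n(x)a^n(s)\le C\Big(\int_0^1 a^n\Big)\Big(\int_0^1 a^nW^2+1\Big),
\]
which is finite precisely by \eqref{H2}. The delicate part is this bookkeeping of the case distinctions in $K$ and checking that no term involves $W^2$ without an accompanying weight $a^n$. If this domination fails on some region, the fallback is to estimate $\|Gf\|_{L^2_a}$ via Cauchy--Schwarz on $\Phi_f$, namely $|\Phi_f(t)|\le(\int_0^ta^n)^{1/2}\|f\|_{L^2_a}$. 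One would then prove compactness by truncation: the operators $G\mathbf 1_{(\delta,1)}$ are compact by classical theory, since the equation is nondegenerate on $(\delta,1)$. The tails $\|G-G\mathbf 1_{(\delta,1)}\|$ are then bounded by $(\int_0^\delta a^nW^2)^{1/2}\to0$, using \eqref{H2}, so $G$ is a norm limit of compact operators and hence compact.
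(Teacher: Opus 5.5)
Your proof is correct and follows essentially the argument the paper imports from \cite[Theorem 3.1]{campiti}: invert $\tilde L_0$ on the mean-zero subspace $F_2$ through the flux $\Phi_f$, identify the kernel $W(\max\{x,s\})$ near $0$, and deduce compactness from $W\in L^2_a(0,1)$. In your version this makes the kernel Hilbert--Schmidt in $L^2(a^n(x)a^n(s)\,dx\,ds)$, so the truncation fallback is not needed. The one step you leave implicit is that $Gf\in D(\tilde L_0)$, i.e.\ $u_x=-\Phi_f/(A_{01}a^n)\in L^2_a(0,1)$; this follows from $|\Phi_f(t)|^2\le \big(\int_0^t a^n\big)\|f\|_{L^2_a}^2$ and Fubini, which give $\int_0^{1/2}a^n|u_x|^2\,dx\le C\|f\|^2_{L^2_a}\int_0^{1/2}a^nW\,dx<\infty$ by \eqref{H2} and Cauchy--Schwarz.
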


\demo 
The ideas of this proof can be found in \cite[Theorem 3.1]{campiti}, using the function $W$ as defined above and adopting the case to the space $L^{2}_{a}(0,1)$.
\fimdemo

\

To be able to affirm that our operator $L_{0} = \tilde{L}_0 + I$ has compact resolvent, we need to observe the following
\begin{proposition} \label{dirsuml2a}
    The space $L^{2}_{a}(0,1)$ admits the orthogonal decomposition $$L^{2}_{a}(0,1) = \ker(\tilde{L}_0) \oplus F_{2},$$ where $\ker(\tilde{L}_0)=$ {\it span}$\{1\},$ the space of constant functions.
\end{proposition}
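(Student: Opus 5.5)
The plan is to prove the statement in three steps: compute the kernel of $\tilde{L}_0$, check orthogonality in the weighted inner product, and split an arbitrary $u$ explicitly.

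\textbf{Kernel.} First I will show $\ker(\tilde{L}_0)=\mathrm{span}\{1\}$. Constants lie in $D(L_0)$: $u_x\equiv 0$, so $a^n u_x\to 0$ at $0$, $u_x(1)=0$, and $\tilde L_0 1=0$. Conversely, let $u\in D(L_0)$ with $\tilde L_0 u=0$, so $(A_{01}a^n u_x)_x=0$ on $(0,1)$. Since $A_{01}a^n u_x\in H^1_a(0,1)$, this product is locally absolutely continuous. Hence $A_{01}a^n u_x\equiv c$ for some constant $c$. The boundary condition $u_x(1)=0$, together with $a(1)>0$ from \eqref{H1}, gives $c=0$; alternatively, $\lim_{x\to0^+}a^n u_x=0$ and $A_{01}$ bounded give the same. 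Since $A_{01}>0$ and $a>0$ on $(0,1]$, we get $u_x=0$ a.e., so $u$ is constant. An equivalent route tests $\tilde L_0u=0$ against $u$ in the variational formulation (as in Theorem \ref{thm:convergencia}) to obtain $\int_0^1 A_{01}a^n|u_x|^2\,dx=0$.

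\textbf{Orthogonality.} In $L^2_a(0,1)$ with inner product $\langle u,v\rangle=\int_0^1 a^n uv\,dx$, the defining condition of $F_2$ is exactly $\langle u,1\rangle=0$. So $F_2=\mathrm{span}\{1\}^{\perp}$. It is closed, being the kernel of a continuous linear functional. The constant $1$ belongs to $L^2_a(0,1)$ because $\int_0^1 a^n\,dx<\infty$, since $a$ is continuous on $[0,1]$. This also shows $\mathrm{span}\{1\}\cap F_2=\{0\}$: a constant $c$ with $c\int_0^1 a^n\,dx=0$ must vanish, because $\int_0^1 a^n\,dx>0$ by \eqref{H1}.

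\textbf{Decomposition.} For $u\in L^2_a(0,1)$ I will write $u=\bar u+(u-\bar u)$, where
\[
\bar u=\frac{\int_0^1 a^n u\,dx}{\int_0^1 a^n\,dx}.
\]
By the Cauchy--Schwarz inequality in $L^2_a(0,1)$, $\bar u$ is a finite constant. Then $u-\bar u\in F_2$, which gives the orthogonal direct sum.

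The only delicate point is the kernel computation near the degenerate endpoint. The step "$(A_{01}a^nu_x)_x=0$ implies the product is constant" requires local absolute continuity on $(0,1)$, which follows from its membership in $H^1_a$, since $a$ is bounded below on each compact subinterval. The boundary condition at $x=1$ then settles the value of the constant without any analysis at the cusp.
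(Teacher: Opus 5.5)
Your proposal is correct and follows essentially the same route as the paper: integrate $(A_{01}a^n u_x)_x=0$ to a constant, use a boundary condition to make the constant zero, and write $u$ as its weighted mean plus a remainder in $F_2$. The differences are minor additions on your side. You check that constants lie in $D(L_0)$ and justify the local absolute continuity of $A_{01}a^nu_x$. You also get uniqueness of the splitting from $F_2=\mathrm{span}\{1\}^{\perp}$, whereas the paper compares two decompositions directly.
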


\demo 
We start by showing that the kernel of the operator $\tilde{L}_0$ is given by $\ker(\tilde{L}_0) = \textit{span}\{1\}$.

Let $ u \in D(\tilde{L}_0)$ such that $\tilde{L}_0 u = 0$. It follows that $\left( A_{01}(x) a^n(x) u_x(x) \right)_x = 0.$ Thus, there exists a constant $ C \in \mathbb{R}$ such that $A_{01}(x) a^n(x) u_x(x) = C.$ Since the function $u$ satisfies the boundary condition, we conclude $C = 0.$

Therefore, as $A_{01}(x) a^n(x) > 0$ in $(0,1)$, we have $u_x(x) = 0$ in $(0,1)$, so $u(x) = \text{constant}$, which proves that $\ker(\tilde{L}_0) = \textit{span}\{1\} $.

Next, we show that every $f \in L^2_a(0,1)$ can be written as the sum of a constant and a function in $F_2$ . Let $f \in L^2_a(0,1)$ and decompose it as $f = c + g,$ where $c \in \mathbb{R}$ and  $g := f - c.$ Define $$c := \frac{\int_0^1 a^n(x) f(x)\,dx}{\int_0^1 a^n(x)\,dx}.$$

Now, we verify that $g \in F_2$ by checking that it is orthogonal to the constant function $1$ in $ L^2_a(0,1)$, that is
$$\int_0^1 a^n(x) g(x)\,dx =  \int_0^1 a^n(x) f(x)\,dx - c \int_0^1 a^n(x)\,dx.$$
\noindent But from the definition of $c $, we have
$c \int_0^1 a^n(x)\,dx = \int_0^1 a^n(x) f(x)\,dx.$
Therefore,
$$\int_0^1 a^n(x) g(x)\,dx = 0,$$
which proves that $g \in F_2$.

 \noindent\textit{Uniqueness.} Suppose there exists another decomposition $f = c' + g'$  with $g' \in F_2$. Then
$$c + g = c'+ g' \quad \Longrightarrow\quad (c - c')  = g' - g.$$
 Since $g, g' \in F_2$, it follows that $g' - g \in F_2$, and thus $(c - c')  \in F_2$. Hence, $c = c'$ and consequently $g = g' $. This proves that the decomposition is unique.
\fimdemo

\

By Proposition \ref{dirsuml2a} given $u \in L^{2}_{a}(0,1)$, it follows that $\tilde{L}_0 u = \tilde{L}_0|_{F_2}(u)$. Therefore, by Theorem \ref{baraocp}, $\tilde{L}_{0}$ has compact resolvent. As the operator $L_{0}$ is the sum of an operator with compact resolvent and a bounded operator, we conclude that $L_{0}$ has compact resolvent. 

\section{Abstract settings and  analysis of the nonlinear terms}\label{sec:5}

For each $\varepsilon \in [0, \varepsilon_0]$, to rigorously compare solutions in varying Hilbert spaces $Z_\varepsilon$ with those in the limit space $Z_0$, we adopt the framework of $E$–convergence, a standard tool for problems in domains depending on a small parameter; see, e.g., \cite{Carvalho-Piskarev:06,vainikkko}.

\begin{definicao}
    We say that the sequence $\{u^{\varepsilon}\}_{\varepsilon \in (0,\varepsilon_0]}$ $E$-converges to $u^0$ if $\Vert u^{\varepsilon} - Eu^0\Vert_{Z_\ep}\stackrel{\varepsilon \rightarrow 0}{\longrightarrow} 0.$ We write this as $u^{\varepsilon}\stackrel{E}{\longrightarrow}u^0.$
\end{definicao}

\begin{definicao}
    Let $\mathcal{B}_{\varepsilon} \subset Z_\ep$ and $\mathcal{B}_{0} \subset X_{0}$. Denote by $\mathrm{dist}(\cdot,\cdot)$ the metric induced by the norm in $Z_\ep$, that is, $\mathrm{dist}(u^{\varepsilon},v^{\varepsilon})=\Vert u^{\varepsilon} - v^{\varepsilon}\Vert_{Z_\ep}$.

    \emph{(i)} We say that the family of sets $\{\mathcal{B}_{\varepsilon}\}_{\varepsilon \in [0, \varepsilon_0]}$ is $E$-upper semicontinuous at $\varepsilon=0$ if $$\sup\limits_{u^{\varepsilon} \in \mathcal{B}_{\varepsilon}}\mathrm{dist}(u^{\varepsilon}, E\mathcal{B}_0)\overset{\varepsilon\rightarrow 0}{\longrightarrow} 0;$$

    \emph{(ii)} We say that the family of sets $\{\mathcal{B}_{\varepsilon}\}_{\varepsilon \in [0, \varepsilon_0]}$ is $E$-lower semicontinuous at $\varepsilon=0$ if$$\sup\limits_{u^{\varepsilon} \in \mathcal{B}_{\varepsilon}}\mathrm{dist}(Eu^{\varepsilon},\mathcal{B}_{\varepsilon})\overset{\varepsilon\rightarrow 0}{\longrightarrow} 0;$$
\end{definicao}

\begin{remark}
   The following characterizations are useful  to show the upper or lower semicontinuity of sets $($for more details see \cite[Lemma 1]{Carvalho-Piskarev:06}$)$:
   
   \emph{(i)} If any sequence $\{u^{\varepsilon}\}_{\varepsilon \in (0,\varepsilon_0]}$ with $u^{\varepsilon} \in \mathcal{B}_{\varepsilon}$ has an $E$-convergent subsequence with limit belonging to $\mathcal{B}_0$, then $\{\mathcal{B}_{\varepsilon}\}$ is $E$-upper semicontinuous at zero.

    \emph{(ii)} If $\mathcal{B}_0$ is compact and for any $u^0 \in \mathcal{B}_0$ there is sequence $\{u^{\varepsilon}\}_{\varepsilon \in (0,\varepsilon_0]}$ with $u^{\varepsilon} \in \mathcal{B}_{\varepsilon}$ which $E_{\varepsilon}$-converges to $u^0$, then $\{\mathcal{B}_{\varepsilon}\}$ is $E$-lower semicontinuous at zero.
\end{remark}
From this notion of convergence, we introduce the definition of compactness.

\begin{definicao}
\emph{(i)} A sequence $\{u^{\varepsilon_n}\}_{n \in \mathbb{N}} \subset Z_{\varepsilon_n}$ with $\varepsilon_n \to 0$ is said to be pre-compact if every subsequence has a further subsequence that $E$-converges to some $u^0 \in Z_0$.

\emph{(ii)} A family of bounded operators $B_\varepsilon \colon Z_\varepsilon \to Z_\varepsilon$ is said to converge in the $EE$-sense to a bounded operator $ B_0 \colon Z_0 \to Z_0$, if $B_{\varepsilon}u^{\varepsilon}\stackrel{E}{\longrightarrow}B_0u^0$ whenever $u^{\varepsilon}\stackrel{E}{\longrightarrow}u^0 \in Z_0.$ We write this as $B_{\varepsilon}\stackrel{EE}{\longrightarrow}B_0.$

\emph{(iii)} A family of compact operators $B_\varepsilon \colon Z_\varepsilon \to Z_\varepsilon$ is said to compactly converge to a compact operator $B_0 \colon Z_0 \to Z_0$, denoted by $B_\varepsilon \stackrel{CC}{\longrightarrow} B_0$, if $B_\varepsilon \stackrel{EE}{\longrightarrow} B_0$ and, for every uniformly bounded family $\{u^\varepsilon\}_{\ep\in(0,\ep_0]} \subset Z_\varepsilon$, the set $\{ B_\varepsilon u^\varepsilon \}_{\ep\in(0,\ep_0]}$ is pre-compact.

\end{definicao}
\begin{lemma}
\label{lemma:compactconv}
The family of compact operators $\{ L_\varepsilon^{-1}\}_{\ep\in(0,\ep_0]}$ converges compactly to $L_0^{-1}$ in $X_\ep^\al$ as $\varepsilon \to 0$.
\end{lemma}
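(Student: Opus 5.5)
We would prove Lemma \ref{lemma:compactconv} by working first in $Z_\varepsilon=X_\varepsilon=L^2(\Omega)$, $Z_0=X_0=L^2_a(0,1)$, and then upgrading to the $X^{1/2}_\varepsilon$ setting using the $H^1_\varepsilon$-estimate of Corollary \ref{cor:conresolv}. Two facts must be verified: $EE$-convergence $L_\varepsilon^{-1}\stackrel{EE}{\longrightarrow}L_0^{-1}$, and pre-compactness of $\{L_\varepsilon^{-1}f^\varepsilon\}$ for every uniformly bounded family $\{f^\varepsilon\}$. Throughout, the inequality $\|Ev\|_{L^2(\Omega)}^2=|B_1|\,\|v\|_{L^2_a(0,1)}^2$ identifies the norm of $E$. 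The same identity with $v_x$, together with $\nabla_y(Ev)=0$, shows that $E$ is a uniformly bounded map $H^1_a(0,1)\to H^1_\varepsilon(\Omega)$. Likewise, Jensen gives $\|Mf\|_{L^2_a}\le |B_1|^{-1/2}\|f\|_{L^2(\Omega)}$.

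\emph{$EE$-convergence.} Let $\|f^\varepsilon-Ef^0\|_{X_\varepsilon^\alpha}\to0$. We split
\[
L_\varepsilon^{-1}f^\varepsilon-EL_0^{-1}f^0=\bigl(L_\varepsilon^{-1}f^\varepsilon-EL_0^{-1}Mf^\varepsilon\bigr)+E L_0^{-1}\bigl(Mf^\varepsilon-f^0\bigr).
\]
By Corollary \ref{cor:conresolv}, the first term is bounded by $C\varepsilon\|f^\varepsilon\|_{L^2(\Omega)}\to0$ in $H^1_\varepsilon(\Omega)$, hence in any $X^\alpha_\varepsilon$ with $\alpha\le 1/2$. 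For the second term we use $MEf^0=f^0$, so that $Mf^\varepsilon-f^0=M(f^\varepsilon-Ef^0)$ and
\[
\|Mf^\varepsilon-f^0\|_{L^2_a}\le |B_1|^{-1/2}\|f^\varepsilon-Ef^0\|_{L^2(\Omega)}\to0.
\]
Since $L_0^{-1}:L^2_a\to H^1_a$ is bounded and $E:H^1_a\to H^1_\varepsilon(\Omega)$ is uniformly bounded, the second term also tends to zero in $H^1_\varepsilon(\Omega)$.

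\emph{Compactness.} Let $\|f^\varepsilon\|_{X_\varepsilon}\le C$. Again by Corollary \ref{cor:conresolv}, we have $\|L_\varepsilon^{-1}f^\varepsilon-EL_0^{-1}Mf^\varepsilon\|_{H^1_\varepsilon}\le C\varepsilon$. It therefore suffices to show that $\{L_0^{-1}Mf^\varepsilon\}$ is pre-compact in $H^1_a(0,1)$. The family $\{Mf^\varepsilon\}$ is bounded in $L^2_a$, and by the preceding subsection (Theorem \ref{baraocp} and Proposition \ref{dirsuml2a}) $L_0^{-1}$ is compact on $L^2_a$. Hence along a subsequence $Mf^\varepsilon\rightharpoonup\hat f$ weakly in $L^2_a$, and $L_0^{-1}Mf^\varepsilon\to L_0^{-1}\hat f=:u^0$ strongly in $L^2_a$. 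Applying $E$ and combining with the $C\varepsilon$ estimate then gives $E$-convergence of $L_\varepsilon^{-1}f^\varepsilon$ to $u^0$ in $L^2(\Omega)$.

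\emph{Main obstacle.} The delicate step is obtaining pre-compactness in the $H^1_a$ (energy) norm, not merely in $L^2_a$, which is needed when $Z_\varepsilon=X^{1/2}_\varepsilon$. For this we would use the energy identity for $v^\varepsilon:=L_0^{-1}Mf^\varepsilon$:
\[
\int_0^1 a^n\bigl(A_{01}|v^\varepsilon_x|^2+|v^\varepsilon|^2\bigr)\,dx=\int_0^1 a^n\, Mf^\varepsilon\, v^\varepsilon\,dx .
\]
Its right-hand side converges to $\int_0^1 a^n\hat f\,u^0\,dx$, a product of a weakly and a strongly convergent sequence. The same identity holds for $u^0$ with $\hat f$, so the norm of $v^\varepsilon$ converges to the norm of $u^0$. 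Together with weak convergence of $v^\varepsilon$ in $H^1_a$ (from its uniform bound), this yields strong $H^1_a$ convergence, exactly as in the last step of Theorem \ref{thm:convergencia}. Finally, for intermediate $\alpha\in(0,1/2)$, interpolation between the $L^2$ and $H^1_\varepsilon$ convergence gives the statement in $X^\alpha_\varepsilon$.
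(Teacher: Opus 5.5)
Your proof is correct, but it places the compactness in a different spot from the paper's proof. The paper takes $\|f^\varepsilon\|_{X_\varepsilon}\le 1$ and extracts $Mf^\varepsilon\rightharpoonup f^0$. It then invokes Theorem~\ref{thm:convergencia}, whose compactness lives on the thin-domain side: a weak limit in $H^1(\Omega)$, the compact embedding $H^1(\Omega)\hookrightarrow L^2(\Omega)$ used implicitly there, and a norm-convergence argument on $\Omega$. It adds Corollary~\ref{cor:conresolv} to reach $X^{1/2}_\varepsilon$, and it spells out only the pre-compactness part, leaving the $EE$-convergence implicit.

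You instead use the $\mathcal{O}(\varepsilon)$ estimate of Corollary~\ref{cor:conresolv} as the bridge. This puts all the compactness in the one-dimensional family $L_0^{-1}Mf^\varepsilon$. That compactness comes from Theorem~\ref{baraocp}, hence from \eqref{H2}, and your one-dimensional energy identity upgrades it from $L^2_a$ to $H^1_a(0,1)$.

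Your route buys two things:
\begin{itemize}
\item convergence in the full $H^1_\varepsilon(\Omega)=X^{1/2}_\varepsilon$ norm, including the $\varepsilon^{-1}\|\nabla_y\cdot\|$ part, which the $H^1(\Omega)$ convergence of Theorem~\ref{thm:convergencia} does not control;
\item an explicit check of both requirements in the definition of compact convergence.
\end{itemize}
Since the paper must cite Corollary~\ref{cor:conresolv} anyway, your reduction also shows that Theorem~\ref{thm:convergencia} is not needed here. The trade-off is that your compactness rests on \eqref{H2} rather than on the compact embedding of $H^1(\Omega)$, while both routes inherit the rate estimate of Theorem~\ref{Theo23}, which uses $a'\in L^\infty$.

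One small simplification: the final interpolation step is unnecessary. Since $L_\varepsilon\ge I$, you have $\|u\|_{X^\alpha_\varepsilon}\le\|u\|_{X^{1/2}_\varepsilon}$ for $\alpha\le 1/2$, so $H^1_\varepsilon$ convergence already gives convergence in every $X^\alpha_\varepsilon$.
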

\demo
Let \( \{ f^\varepsilon \}_{\ep \in(0,\ep_0]}\) be such that \( \| f^\varepsilon \|_{X_\ep} \leq 1 \). We want to show that \( \{ L_\varepsilon^{-1} f^\varepsilon \}_{\ep\in(0,\ep_0]} \) is pre-compact in \( X^{\frac{1}{2}}_\ep \) via the extension operator \( E \). 

We observe that for any subsequence, the following estimate holds $\| M f^\varepsilon \|_{X_0}^2 \leq C \| f^\varepsilon \|_{X^{\frac{1}{2}}_\ep},$ so \( \{ M f^\varepsilon \}_{\ep \in(0,\ep_0]} \) is bounded in \( X_0 \), and there exists a subsequence (still denoted \( f^\varepsilon \)) such that \( M f^\varepsilon \rightharpoonup f^0 \) in \( X^{\frac{1}{2}}_0 \).

By Theorem \ref{thm:convergencia} and Corollary \ref{cor:conresolv}, there exists \( u^\varepsilon = L_\varepsilon^{-1} f^\varepsilon \) such that \( u^\varepsilon \to Eu^0 \) strongly in \( H^1(\Omega) \), with \( u^0 = L_0^{-1} f^0 \). 

This shows that the family \( \{ L_\varepsilon^{-1} \}_{\ep\in(0,\ep_0]} \) converges compactly to \( L_0^{-1} \) in $X^\al_\ep$ as \( \varepsilon \to 0 \), where we used that \( H^1_{\varepsilon}(\Omega)=X^{1/2}_{\varepsilon} \hookrightarrow X^{\alpha}_{\varepsilon} \) for all \( \alpha\in[0,1/2) \).
\fimdemo

\

The following result is a direct consequence of Theorem \ref{Theo23}.
\begin{corollary}\label{cor:convergencia}
For $\ep\in(0,\ep_0]$,  there exist  a function \( \nu(\varepsilon) \to 0 \) as \( \varepsilon \to 0 \), such that 
\begin{equation*}\label{corconv}
    \| L_\varepsilon^{-1} - E L_0^{-1} M \|_{\mathcal{L}(X_\ep, X^{\al}_\ep)} \leq \nu(\varepsilon).
\end{equation*}
\end{corollary}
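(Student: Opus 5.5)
The plan is to derive the operator-norm estimate directly from Corollary~\ref{cor:conresolv}, which is the rescaled form of Theorem~\ref{Theo23}, and then pass from the $X^{1/2}_\varepsilon$ level to the $X^{\alpha}_\varepsilon$ level by a uniform embedding.

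\emph{Step 1.} Fix $f^\varepsilon\in X_\varepsilon=L^2(\Omega)$. Set $g^\varepsilon=f^\varepsilon$ in the rescaled formulation that follows Theorem~\ref{Theo23}. Then $u^\varepsilon=L_\varepsilon^{-1}f^\varepsilon$ and $v^\varepsilon=L_0^{-1}Mf^\varepsilon$. Corollary~\ref{cor:conresolv} gives
\[
\|L_\varepsilon^{-1}f^\varepsilon-EL_0^{-1}Mf^\varepsilon\|_{H^1_\varepsilon(\Omega)}\le C\varepsilon\|f^\varepsilon\|_{L^2(\Omega)},
\]
where $C$ does not depend on $\varepsilon$ or $f^\varepsilon$. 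This is the only place where the constant must be checked carefully. The factors $\varepsilon^{-n}$ that appear when $i_\varepsilon$ maps $R^\varepsilon$ onto $\Omega$ occur identically in the $H^1(R^\varepsilon)$ and $L^2(R^\varepsilon)$ norms, so they cancel. Likewise, $M_\varepsilon f^\varepsilon$ on $R^\varepsilon$ corresponds to $M(i_\varepsilon f^\varepsilon)$ on $\Omega$, so the limit problem in Theorem~\ref{Theo23} is exactly $L_0v^\varepsilon=Mg^\varepsilon$.

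\emph{Step 2.} For $\alpha\in[0,1/2]$, use the continuous embedding $X^{1/2}_\varepsilon\hookrightarrow X^\alpha_\varepsilon$. Its constant must be uniform in $\varepsilon$. Because $L_\varepsilon$ is selfadjoint and $L_\varepsilon\ge I$ (the form $b_\varepsilon(u,u)+\|u\|^2$ dominates $\|u\|^2$), the moment inequality gives
\[
\|L_\varepsilon^\alpha w\|\le \|L_\varepsilon^{1/2}w\|^{2\alpha}\|w\|^{1-2\alpha}\le \|L_\varepsilon^{1/2}w\|=\|w\|_{H^1_\varepsilon(\Omega)}.
\]
So the embedding constant is $1$. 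One subtlety is that $EL_0^{-1}Mf^\varepsilon$ need not lie in $D(L_\varepsilon)$. It does lie in $H^1_\varepsilon(\Omega)=X^{1/2}_\varepsilon$, since it is constant in $y$ and the trivial extension satisfies $\|Eu\|_{H^1_\varepsilon}^2\le C|B_1|\|u\|^2_{H^1_a}$. Hence the difference is an element of $X^{1/2}_\varepsilon$ and the inequality applies to it.

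\emph{Step 3.} Taking the supremum over $\|f^\varepsilon\|_{X_\varepsilon}\le1$ yields
\[
\|L_\varepsilon^{-1}-EL_0^{-1}M\|_{\mathcal L(X_\varepsilon,X^\alpha_\varepsilon)}\le C\varepsilon=:\nu(\varepsilon)\to0.
\]

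The main obstacle is Step 1: confirming that Corollary~\ref{cor:conresolv} really holds with an $\varepsilon$-independent constant after rescaling. I would re-track the scaling identities $\|i_\varepsilon w\|^2_{L^2(\Omega)}=\varepsilon^{-n}\|w\|^2_{L^2(R^\varepsilon)}$ and $\|i_\varepsilon w\|^2_{H^1_\varepsilon(\Omega)}=\varepsilon^{-n}\|w\|^2_{H^1(R^\varepsilon)}$. Together with the identity $i_\varepsilon E_\varepsilon=E$, the bound \eqref{teo23} divided by $\varepsilon^{n/2}$ then gives exactly the claimed estimate. The uniformity of the $X^{1/2}_\varepsilon\hookrightarrow X^\alpha_\varepsilon$ embedding from Step 2 is the other point to verify.
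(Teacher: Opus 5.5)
Your proposal is correct and follows the paper's route: the paper records this estimate as a direct consequence of Theorem~\ref{Theo23} (through Corollary~\ref{cor:conresolv}), with $\nu(\varepsilon)=C_0\varepsilon$ as made explicit in Remark~\ref{remark54}. Your checks of the $i_\varepsilon$-rescaling and of the uniform embedding $X^{1/2}_\varepsilon\hookrightarrow X^\alpha_\varepsilon$ (constant $1$ because $L_\varepsilon\ge I$) supply details the paper leaves implicit. One small caveat: the $H^1_\varepsilon(\Omega)$ norm carries the weight $A^\varepsilon$, while Theorem~\ref{Theo23} uses the standard $H^1(R^\varepsilon)$ norm, so the rescaling identity holds only up to the uniform constants from \eqref{A1}--\eqref{A3}, which does not affect the conclusion.
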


We now turn to the nonlinear part of the problem. In order to carry out the perturbation analysis of the semigroups and to quantify the convergence of equilibria and attractors, we  present uniform properties of the nonlinearities induced by the reaction term $f$ via the associated Nemitskii's operators $F_\varepsilon$ (and $F_0$) on the spaces $X^\alpha_\varepsilon$ and $X^\alpha_0$. 

\begin{remark}
Under \eqref{H3}, the weighted structure induced by $a(\cdot)$ and $A_{01}(\cdot)$ guarantees the integrability needed to control the compositions with $f$ in the relevant spaces. In this setting, the Nemitskii's operators $F_\varepsilon(u)=f(u)$ and $F_0(u)=f(u)$ are Fréchet differentiable on $H_\ep^1(\Omega)$ (and on the corresponding weighted spaces for the limit problem). The proof follows from Lemma \ref{lema:nemytskii}, standard arguments and can be adapted from classical results in the literature, see, e.g., \cite[Lemma 6.7]{patricia} and \cite[Lemma 4.2]{flank}.
\end{remark}


       
\section{Rate of the distance of attractors}\label{sec:6}

We prove that the perturbed and limit attractors converge in the Hausdorff distance with an explicit rate. The argument is built in a simple chain: (i) rates for equilibria; (ii) convergence estimates for linear semigroups; (iii) corresponding estimates for nonlinear semigroups; (iv) continuity with rate of local unstable manifolds. Putting these pieces together yields the rate for the attractors. 

\subsection{Continuity of the equilibria set}

In this subsection we compare the equilibria of the perturbed problems with those of the limit one–dimensional problem. 
Let us denote by
\[
\mathcal{E}_\varepsilon = \left\{ u^\varepsilon \in X_\ep^{\frac{1}{2}} : u^\varepsilon \text{ solves the problem (\ref{Probpertmud})} \right\},
\]
the set of equilibrium solutions of \eqref{eq1}. Our objective is to prove that the family \( \mathcal{E}_\varepsilon \) exhibits semicontinuity (both upper and lower) as \( \varepsilon \to 0 \), where the limit set corresponds to the equilibria of the effective one-dimensional problem.





To this end, we divide the argument into two theorems, addressing the upper and lower semicontinuity of the equilibrium set $\mathcal{E}_\varepsilon$ as $\varepsilon \to 0$. 

We begin by analyzing the upper semicontinuity of solutions $u^\varepsilon$.







\begin{theorem}\label{thm:53}
    Let $a$ satisfy the condition \eqref{H1} and $\{u^{\ep}\}_{\ep \in (0, \ep_{0}]}$ a sequence such that $u^{\ep} \in \mathcal{E}_\varepsilon$ for each $\varepsilon \in (0, \ep_{0}]$. Then, there exists a subsequence, still denoted by $\{u^{\ep}\}_{\ep \in (0, \ep_{0}]}$, and a function $u^0 \in H^{1}_{a}(0,1)$ such that $|| u^{\ep} - Eu^0||_{X^\al_\ep} \rightarrow 0$ as $\ep \rightarrow 0$ and $u^0 \in \mathcal{E}_0$.
\end{theorem}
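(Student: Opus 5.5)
Each equilibrium satisfies the fixed-point identity $u^\ep = L_\ep^{-1}F_\ep(u^\ep)$. The plan is to use this identity together with the compact convergence of resolvents, Lemma \ref{lemma:compactconv}.

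\textbf{Step 1: uniform bounds.} Every equilibrium lies in the global attractor, so $\mathcal{E}_\ep\subset\mathcal{A}_\ep$. By Theorem \ref{glob-eps} and \eqref{limPe}, $\sup_{\ep}\|u^\ep\|_{H^1_\ep(\Omega)}\le C$. By \eqref{eq:liminf}, also $\|u^\ep\|_{L^\infty(\Omega)}\le N$ with $N$ independent of $\ep$. Lemma \ref{lema:nemytskii} then gives $\|F_\ep(u^\ep)\|_{L^2(\Omega)}\le C_F$ uniformly in $\ep$.

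\textbf{Step 2: extracting a limit.} Set $f^\ep:=F_\ep(u^\ep)$, which is bounded in $X_\ep$. Then $Mf^\ep$ is bounded in $L^2_a(0,1)$, so along a subsequence $Mf^\ep\rightharpoonup \hat f$ weakly in $L^2_a(0,1)$. Since $u^\ep=L_\ep^{-1}f^\ep$, Theorem \ref{thm:convergencia} (equivalently, the compact convergence in Lemma \ref{lemma:compactconv}) yields $u^0\in H^1_a(0,1)$ with $u^\ep\to Eu^0$ strongly in $H^1(\Omega)$ and $u^0=L_0^{-1}\hat f$. The $X^\al_\ep$ convergence for $\al<1/2$ needs a quantitative bound. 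By Corollary \ref{cor:conresolv},
\[
\|u^\ep-EL_0^{-1}Mf^\ep\|_{H^1_\ep(\Omega)}\le C\ep.
\]
Moreover, $L_0^{-1}Mf^\ep\to L_0^{-1}\hat f$ strongly in $H^1_a(0,1)$, because $L_0^{-1}$ is compact (Theorem \ref{baraocp} and Proposition \ref{dirsuml2a}). Since $\|E\,\cdot\,\|_{H^1_\ep(\Omega)}$ is equivalent to the $H^1_a$ norm, it follows that $\|u^\ep-Eu^0\|_{H^1_\ep(\Omega)}\to0$, and hence convergence holds in $X^\al_\ep$ as well.

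\textbf{Step 3: identifying $\hat f=f(u^0)$.} This is the main obstacle, because $f$ is nonlinear and we only have weak convergence of $Mf^\ep$. Strong $L^2(\Omega)$ convergence $u^\ep\to Eu^0$, after passing to a further subsequence, gives a.e. convergence. Together with the uniform $L^\infty$ bound and continuity of $f$, dominated convergence on the finite-measure set $\Omega$ yields $f(u^\ep)\to f(Eu^0)=Ef(u^0)$ in $L^2(\Omega)$. If the $L^\infty$ bound were unavailable, I would instead use the local Lipschitz estimate of Lemma \ref{lema:nemytskii} together with the uniform embedding \eqref{eq:emb-2gamma}:
\[
\|F_\ep(u^\ep)-F_\ep(Eu^0)\|_{L^2(\Omega)}\le L_F\|u^\ep-Eu^0\|_{H^1_\ep(\Omega)}.
\]
Since $M$ is bounded from $L^2(\Omega)$ to $L^2_a(0,1)$ and $MEf(u^0)=f(u^0)$, we get $Mf^\ep\to f(u^0)$ strongly. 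By uniqueness of weak limits, $\hat f=f(u^0)$.

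\textbf{Conclusion.} Therefore $u^0=L_0^{-1}f(u^0)$, which means $u^0$ is a weak solution of the stationary version of \eqref{eq:Po}. Its boundary behaviour follows from the argument after \eqref{problim}, so $u^0\in\mathcal{E}_0$.
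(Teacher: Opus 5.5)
Your argument is correct, but it reaches the conclusion by a somewhat different route from the paper's.

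\textbf{The two routes.} The paper works with the nonlinear equation directly. It tests with $\varphi=u^\ep$ to get a uniform $X^{1/2}_\ep$ bound, extracts a weak limit, and checks that the limit does not depend on $y$. It then passes to the limit in the weak formulation using a global bound on $f'$, and asserts strong convergence ``analogously'' to Theorem~\ref{thm:convergencia}. You instead freeze the nonlinearity, $f^\ep=F_\ep(u^\ep)$, and read the equilibrium equation as the linear problem $u^\ep=L_\ep^{-1}f^\ep$. The linear results (Theorem~\ref{thm:convergencia} and Corollary~\ref{cor:conresolv}) then do the analytic work, and only the identification $\hat f=f(u^0)$ remains.

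\textbf{What your route gains.} First, the $O(\ep)$ resolvent estimate controls the full $H^1_\ep(\Omega)$ norm, including $\ep^{-1}\|\nabla_y u^\ep\|_{L^2(\Omega)}$. Theorem~\ref{thm:convergencia} only gives convergence in $H^1(\Omega)$. That suffices for $\al<\tfrac12$, by interpolating the $L^2$ convergence against the uniform $H^1_\ep$ bound, but not for the endpoint $\al=\tfrac12$. So your remark that $\al<\tfrac12$ ``needs a quantitative bound'' has it backwards: it is $\al=\tfrac12$ that does. Second, your identification of the nonlinear limit uses only continuity of $f$ plus the $L^\infty$ bound \eqref{eq:liminf}, or alternatively the local Lipschitz estimate of Lemma~\ref{lema:nemytskii}. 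The paper instead uses a global bound $\sup|f'|<\infty$, which is not among its hypotheses.

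\textbf{What it costs.} Your dependencies are heavier. \eqref{limPe}, \eqref{eq:liminf} and Lemma~\ref{lema:nemytskii} rest on \eqref{H3} and the dissipativity, and compactness of $L_0^{-1}$ rests on \eqref{H2}, whereas the statement lists only \eqref{H1}. Two of these can be removed:
\begin{itemize}
\item The uniform bound follows from \eqref{eq:diss} without attractor theory: testing with $u^\ep$ gives $\|u^\ep\|^2_{H^1_\ep(\Omega)}=\int_\Omega f(u^\ep)u^\ep\le|\Omega|\max_{|s|\le m_f}|f(s)s|$.
\item Compactness of $L_0^{-1}$ is not needed. Step 3 only uses the $L^2(\Omega)$ convergence supplied by Theorem~\ref{thm:convergencia}. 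Once Step 3 gives $Mf^\ep\to f(u^0)$ strongly in $L^2_a(0,1)$, boundedness of $L_0^{-1}\colon L^2_a(0,1)\to H^1_a(0,1)$ already yields $L_0^{-1}Mf^\ep\to u^0$ in $H^1_a(0,1)$.
\end{itemize}
Some control on the growth of $f$, via \eqref{H3}, is still needed in Step 3, as it is implicitly in the paper.
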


\demo The proof is based on \cite [Proposition 4.1] {barros}. 

First, we observe that a solution $u^{\ep}$ of (\ref{Probpertmud}) is uniformly bounded in $X^\al_\ep$ with respect to $\ep$. For this, consider $u^{\ep} \in \mathcal{E}_\varepsilon$ and the test function $\varphi = u^{\varepsilon}$ in the weak formulation of \eqref{Probpertmud}, it follows that
\[
\int_\Omega A^\varepsilon\left(x, y \right) \nabla^\ep u^\ep\cdot \nabla^\ep u^\ep + u^\ep u^\ep = \int_\Omega f (u^\ep) u^\ep.
\]

\noindent Then, using the definition of the norm $|| \cdot ||_{X^{\frac{1}{2}}_\ep}$,

\begin{equation*}
    ||u^{\ep}||^{2}_{X^\frac{1}{2}_\ep}\leq \sup_{x \in \Omega}|f(x)| |\Omega|^{\frac{1}{2}} ||u^{\varepsilon}||_{X_\ep} \leq \sup_{x \in \Omega}|f(x)| |\Omega|^{\frac{1}{2}} ||u^{\varepsilon}||_{H^{1}(\Omega)}.
\end{equation*}
\
\noindent Since $|| \cdot ||_{H^{1}(\Omega)} \leq || \cdot ||_{H^{1}_\ep(\Omega)}$ for all $\varepsilon \in (0,\ep_0]$, there exists a constant $K(\Omega, f) = K $ such that 
$||u^{\ep}||_{X^\frac{1}{2}_\ep} \leq K$
and then, 
\begin{equation*}
    \max \left\{ ||u^{\ep}||_{X_\ep}, ||u_{x}^{\ep}||_{X_\ep}, \tfrac{1}{\ep} ||\nabla_{y}u^{\ep}||_{X_\ep} \right\} \leq K, \quad \forall \varepsilon \in (0,\ep_0].
\end{equation*}
So, there exists a subsequence, still denoted $u^{\ep}$, such that: $u^\ep \rightharpoonup u^0$ weakly in $X_\ep^\al$, $u^\ep \to u^0$ strongly in $X_\ep$ and $\nabla_y u^\ep \to 0 \text{ strongly in } X_\ep$ as $\varepsilon \rightarrow 0$, for some $u^{0} \in X_\ep^\al$. Moreover, it follows that $u^{0}(x,y) = u^{0}(x)$, that is, $u^{0}$ does not depend on $y$ and so, $u^{0} \in X_0$.

It is simple to see that $u^{0}$ satisfies the limit equation due to the convergences above and the fact that $\sup |f'| \leq \infty$.
On one side, we have
\begin{equation*}
    \int_{\Omega} |f(u^{\ep}) - f(u^{0})| |\varphi| \leq \sup ||f'||_{\infty} ||u^{\ep} - u^{0}||_{X_\ep} ||\varphi||_{X_\ep} \rightarrow 0
\end{equation*}
\noindent as $\ep \rightarrow 0$, for all $\varphi  \in X_\ep$. Now, on the other side, it is straightforward,
\begin{equation*}
    \int_{\Omega} A_{01} u_{x}^{0} \varphi_{x} + u^{0} \varphi = \int_{\Omega} f(u^{0}) \varphi.
\end{equation*}
\noindent Then, $u^{0}$ is a solution of (\ref{problim}). If $\varphi$
does not depend on $y$,
\begin{equation*}
    |B_{1}|\int_{0}^{1} A_{01} a^{n} u_{x}^{0} \varphi_{x} + a^{n}u^{0} \varphi = |B_{1}|\int_{0}^{1} a^{n} f(u^{0}) \varphi.
\end{equation*}

It remains to prove strong convergence in $X^\al_\ep$. This follows analogously to the proof of the Theorem \ref{thm:convergencia} since $\al \in (0,\tfrac{1}{2}]$. Therefore, $||u^{\ep} - Eu^{0}||_{X^\al_\ep} \to 0$ as $\ep \to 0$.
    
\fimdemo

We now investigate whether the family $\{ \mathcal{E}_\varepsilon \}_{\varepsilon \in (0,\ep_0]}$ also satisfies lower semicontinuity. 
To address this question, we consider the structure of the nonlinear elliptic problem and its limiting configuration. Suppose $u^\varepsilon \in \mathcal{E}_\varepsilon$ converges to $u^0 \in X_0$, and that $u^0 \in \mathcal{E}_0$ is hyperbolic in the sense that $0 \notin \sigma(L_0 - F_0'(u^0))$ with $L_0$ being an operator of compact resolvent. This non-degeneracy condition guarantees that $u^0$ is an isolated equilibrium. Our goal is to show that for sufficiently small $\varepsilon > 0$, the approximate equilibria $u^\varepsilon$ inherit this property and are also isolated. This uniform control on the structure of the equilibria allows us to conclude the lower semicontinuity of the family $\{ \mathcal{E}_\varepsilon \}_{\varepsilon \in (0,\ep_0]}$ at $\varepsilon = 0$.

We begin by noting that if $F$ is continuously Fréchet, then the operators $L_\varepsilon - F_\varepsilon'(u^0)$ are invertible for small $\varepsilon > 0$, provided the limit operator $L_0 - F_0'(u^0)$ is invertible with compact resolvent. This leads to the following result

\begin{proposition}
Let us assume conditions \eqref{H1} and \eqref{H2} for the function $a$. Then, the family of operators \( \{ L_\varepsilon^{-1}F_\varepsilon \}_{\varepsilon \in (0,\ep_0]} \) converges compactly to \( L_0^{-1}F_0 \) in \( X^\al_\ep \) as \( \varepsilon \to 0 \).
\end{proposition}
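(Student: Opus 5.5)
The plan is to verify the two ingredients of compact convergence for the nonlinear compact maps $L_\varepsilon^{-1}F_\varepsilon : X^\alpha_\varepsilon \to X^\alpha_\varepsilon$: convergence in the $EE$ sense, and pre-compactness of images of bounded families. The tools are the uniform estimates on $F_\varepsilon$ from Lemma \ref{lema:nemytskii} (and the subsequent Remark for $X^\alpha_\varepsilon$), the compact convergence of the resolvents $L_\varepsilon^{-1}\stackrel{CC}{\longrightarrow}L_0^{-1}$ from Lemma \ref{lemma:compactconv}, and the rate in Corollary \ref{cor:conresolv}. Throughout, $Z_\varepsilon = X^\alpha_\varepsilon$, and $E$-convergence is measured by $\|u^\varepsilon - Eu^0\|_{X^\alpha_\varepsilon}$.

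\textbf{Step 1 (pre-compactness).} Let $\{u^\varepsilon\}$ be bounded in $X^\alpha_\varepsilon$, say $\|u^\varepsilon\|_{X^\alpha_\varepsilon}\le R$. By Lemma \ref{lema:nemytskii} and the Remark following it, $\|F_\varepsilon(u^\varepsilon)\|_{X_\varepsilon}\le C_F(R)$ uniformly in $\varepsilon$. Then $\{F_\varepsilon(u^\varepsilon)\}$ is a uniformly bounded family in $X_\varepsilon$. Lemma \ref{lemma:compactconv}, applied to it, shows that $\{L_\varepsilon^{-1}F_\varepsilon(u^\varepsilon)\}$ is pre-compact in $X^\alpha_\varepsilon$. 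Concretely, one extracts a subsequence with $MF_\varepsilon(u^\varepsilon)\rightharpoonup \hat f$ in $L^2_a(0,1)$ and uses Theorem \ref{thm:convergencia}.

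\textbf{Step 2 ($EE$-convergence).} Suppose $u^\varepsilon\stackrel{E}{\longrightarrow}u^0$ in $X^\alpha_\varepsilon$. Split
\[
L_\varepsilon^{-1}F_\varepsilon(u^\varepsilon)-EL_0^{-1}F_0(u^0)
=L_\varepsilon^{-1}\bigl(F_\varepsilon(u^\varepsilon)-F_\varepsilon(Eu^0)\bigr)+\bigl(L_\varepsilon^{-1}-EL_0^{-1}M\bigr)F_\varepsilon(Eu^0),
\]
using that $F_\varepsilon(Eu^0)=Ef(u^0)$, so that $MF_\varepsilon(Eu^0)=F_0(u^0)$.

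The first term is bounded by $\|L_\varepsilon^{-1}\|_{\mathcal L(X_\varepsilon,X^\alpha_\varepsilon)}\,L_F\,\|u^\varepsilon-Eu^0\|_{X^\alpha_\varepsilon}$. Here the operator norm is uniform in $\varepsilon$ by coercivity \eqref{coer}, and $L_F$ is the $\varepsilon$-independent local Lipschitz constant of Lemma \ref{lema:nemytskii}, in its $X^\alpha_\varepsilon$ version. This term therefore tends to zero.

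The second term is bounded by $C\varepsilon\|Ef(u^0)\|_{X_\varepsilon}$ via Corollary \ref{cor:conresolv} (or by $\nu(\varepsilon)$ via Corollary \ref{cor:convergencia}), and $\|Ef(u^0)\|_{X_\varepsilon}$ is finite by the growth bound \eqref{eq:poly-f}.

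Finally, $L_0^{-1}F_0(u^0)$ is compact as a map on $X^\alpha_0$, since $L_0$ has compact resolvent by Theorem \ref{baraocp} and Proposition \ref{dirsuml2a}; this is where \eqref{H2} enters. Combining Steps 1 and 2 gives the compact convergence.

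\textbf{Main obstacle.} The delicate point is the uniformity of the Nemytskii estimates when $\alpha<1/2$. For $\alpha=1/2$ they follow directly from \eqref{eq:emb-2gamma}, i.e. the cusp Sobolev embedding from \eqref{H3}, which the paper uses throughout. For $\alpha<1/2$ one needs $X^\alpha_\varepsilon\hookrightarrow L^{2\gamma}(\Omega)$ uniformly in $\varepsilon$, and I would first try interpolation between $L^2$ and $H^1_\varepsilon$. If that fails, the fallback is to restrict to bounded sets of the attractor region. There one can use the uniform $L^\infty$ bound \eqref{eq:liminf} and replace $f$ by a globally Lipschitz truncation, which makes $F_\varepsilon$ globally Lipschitz $X_\varepsilon\to X_\varepsilon$ with an $\varepsilon$-independent constant and settles both steps.
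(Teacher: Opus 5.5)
Your proposal is correct, resting on the same uniform Nemytskii estimate the paper uses, but it reaches the $EE$-convergence by a different route. Your Step 1 coincides with the paper's pre-compactness argument: a uniform bound on $F_\varepsilon(u^\varepsilon)$ in $X_\varepsilon$ plus compactness of $L_\varepsilon^{-1}$. For the $EE$ part, the paper does not split the difference. It simply ``applies Theorem \ref{thm:53}'' to $L_\varepsilon^{-1}F_\varepsilon(u^\varepsilon)$, i.e.\ it re-runs the qualitative equilibrium argument: uniform bounds, weak limit, $\nabla_y u^\varepsilon\to 0$, identification of the limit equation, then norm convergence as in Theorem \ref{thm:convergencia}. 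Your decomposition $L_\varepsilon^{-1}\bigl(F_\varepsilon(u^\varepsilon)-F_\varepsilon(Eu^0)\bigr)+\bigl(L_\varepsilon^{-1}-EL_0^{-1}M\bigr)F_\varepsilon(Eu^0)$ uses $F_\varepsilon E=EF_0$ and Corollary \ref{cor:conresolv}. It is shorter and quantitative, giving $\|L_\varepsilon^{-1}F_\varepsilon(u^\varepsilon)-EL_0^{-1}F_0(u^0)\|_{X^\alpha_\varepsilon}\le C\bigl(\|u^\varepsilon-Eu^0\|_{X^\alpha_\varepsilon}+\varepsilon\bigr)$, the kind of bound needed later for rates. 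The paper's route uses only the qualitative convergence of Theorem \ref{thm:convergencia} and gives no rate. You also check that the limit map $L_0^{-1}F_0$ is compact via (H2), which the definition requires and the paper's proof skips. Both routes need the uniform-in-$\varepsilon$ local Lipschitz and boundedness property of $F_\varepsilon$ on bounded sets of $X^\alpha_\varepsilon$, which the paper only asserts in the Remark after Lemma \ref{lema:nemytskii}. Your caution there is warranted. Interpolating $L^2$ with the cusp embedding $H^1_\varepsilon(\Omega)\hookrightarrow L^q(\Omega)$ gives $X^\alpha_\varepsilon\hookrightarrow L^{2\gamma}(\Omega)$ only when $\tfrac12-\alpha\bigl(1-\tfrac2q\bigr)\le\tfrac1{2\gamma}$. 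Outside that range, your truncation fallback based on the $L^\infty$ bound is the honest fix; the proof of Theorem \ref{thm:53} tacitly does the same by using $\sup|f'|<\infty$. The price is that you then prove the statement for the truncated nonlinearity rather than for $f$ itself.
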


\demo For each \( \varepsilon \in (0,\ep_0] \), the operator \( L_\varepsilon^{-1} F_\varepsilon : X^\al_\ep \to X^\al_\ep \) is compact due to the compactness of the resolvent \( L_\varepsilon^{-1} \) and the continuity of $F_\varepsilon$.

Given a sequence \( u^\varepsilon \in X^\al_\ep \) such that \( \| u^\varepsilon \|_{X^\al_\ep} \leq C \), the uniform boundedness of \( \{ F_\varepsilon(u^\varepsilon) \}_{\ep\in(0,\ep_0]} \subset X_\ep \), together with the compactness of \( L_\varepsilon^{-1} \), implies that the sequence \( \{ L_\varepsilon^{-1} F_\varepsilon(u^\varepsilon) \}_{\ep\in(0,\ep_0]} \) is precompact in \( X^\al_\ep \).

Now applying Theorem \ref{thm:53} to $L_\varepsilon^{-1}F_\varepsilon(u^\varepsilon)$ and $L_0^{-1}F_0(u^0)$, we obtain the following
\[
\bigl\|L_\varepsilon^{-1}F_\varepsilon(u^\varepsilon)-E\,L_0^{-1}F_0(u^0)\bigr\|_{X^\al_\ep}\to 0,
\quad \text{as } \varepsilon \to 0.
\]
This yields the desired compact convergence. 
\fimdemo

\begin{proposition}\label{propconcompoplinea}
Let $u_0^* \in \mathcal{E}_0$ be a hyperbolic equilibrium. Then, for $\varepsilon \in (0,\ep_0]$, we have $0 \notin \sigma(L_\varepsilon - F_\varepsilon'(Eu_0^*))$ and $  \left\| (L_\varepsilon - F_\varepsilon'(E u_0^*))^{-1} \right\|_{\mathcal{L}(X^\alpha_\ep, X_\ep)} \leq C,$ where $C > 0$ is independent of $\varepsilon$. Moreover, the family of inverses $\{ (L_\varepsilon - F_\varepsilon'(E u^*_0))^{-1} \}_{\varepsilon \in (0,\ep_0]}$ converges compactly to $(L_0 - F_0'(u^*_0))^{-1}$.
\end{proposition}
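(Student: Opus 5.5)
The plan is the classical compact-convergence argument (as in Carvalho--Piskarev): reduce invertibility of $L_\varepsilon - F_\varepsilon'(Eu_0^*)$ to invertibility of $I - L_\varepsilon^{-1}F_\varepsilon'(Eu_0^*)$, and argue by contradiction using compactness.

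\textbf{Step 1: compact convergence of the linearized operators.} I first show that
\[
B_\varepsilon := L_\varepsilon^{-1}F_\varepsilon'(Eu_0^*) \xrightarrow{CC} B_0 := L_0^{-1}F_0'(u_0^*).
\]
By \eqref{limPo}, $u_0^*$ lies in $L^\infty(0,1)$, so $F_\varepsilon'(Eu_0^*)$ is the multiplication operator by $f'(u_0^*(x))$, which is bounded in $L^\infty$ uniformly in $\varepsilon$. Hence $\|F_\varepsilon'(Eu_0^*)w\|_{X_\varepsilon}\le C\|w\|_{X_\varepsilon}$.

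Multiplication by a function of $x$ alone commutes with $E$. Moreover, because $M$ is a transversal average, $M(f'(u_0^*)w)=f'(u_0^*)Mw$. So if $w^\varepsilon \xrightarrow{E} w^0$, then
\[
F_\varepsilon'(Eu_0^*)w^\varepsilon \xrightarrow{E} F_0'(u_0^*)w^0 \quad\text{in } X_\varepsilon.
\]
Combining this with Corollary \ref{cor:conresolv}, Corollary \ref{cor:convergencia} and Lemma \ref{lemma:compactconv} gives $EE$-convergence and precompactness, so $B_\varepsilon \xrightarrow{CC} B_0$.

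\textbf{Step 2: invertibility via Fredholm alternative.} Write
\[
L_\varepsilon - F_\varepsilon'(Eu_0^*) = L_\varepsilon\bigl(I - B_\varepsilon\bigr).
\]
Since $B_\varepsilon$ is compact, $I-B_\varepsilon$ is invertible as soon as it is injective. Hyperbolicity of $u_0^*$ means $I-B_0$ is injective on $X_0^\alpha$.

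\textbf{Step 3: uniform bound, by contradiction (main obstacle).} I expect this step to be the main difficulty. I claim there exist $c>0$ and $\varepsilon_1$ such that
\[
\|(I-B_\varepsilon)w\|_{X^\alpha_\varepsilon}\ge c\|w\|_{X^\alpha_\varepsilon} \quad\text{for } \varepsilon\le\varepsilon_1.
\]
If not, take $\varepsilon_k\to0$ and $w_k$ with $\|w_k\|_{X^\alpha_{\varepsilon_k}}=1$ and $(I-B_{\varepsilon_k})w_k\to0$.

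By compactness of $\{B_{\varepsilon_k}w_k\}$, a subsequence satisfies $B_{\varepsilon_k}w_k \xrightarrow{E} v$. Therefore $w_k \xrightarrow{E} v$ as well, and $\|v\|=\lim\|w_k\|=1$; here I use the norm convergence $\|Ev\|_{X^\alpha_\varepsilon}\to\|v\|_{X^\alpha_0}$, which follows from the definition of the weighted norms up to the factor $|B_1|$.

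The $EE$-convergence from Step 1 then gives $B_{\varepsilon_k}w_k \xrightarrow{E} B_0 v$, so $v=B_0v$ with $v\neq 0$. This contradicts hyperbolicity.

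The resolvent bound follows by composing:
\[
(L_\varepsilon-F_\varepsilon'(Eu_0^*))^{-1}=(I-B_\varepsilon)^{-1}L_\varepsilon^{-1},
\]
using the uniform bound on $L_\varepsilon^{-1}$ given by coercivity \eqref{coer}. The statement's $\mathcal L(X^\alpha_\varepsilon,X_\varepsilon)$ bound follows from this together with the continuous embeddings. The delicate point is making sure the limit $v$ is nonzero, which requires the norm-compatibility of $E$ noted above.

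\textbf{Step 4: compact convergence of the inverses.} Finally, let $g^\varepsilon \xrightarrow{E} g^0$ and set $z^\varepsilon=(L_\varepsilon - F_\varepsilon'(Eu_0^*))^{-1}g^\varepsilon$. Then
\[
z^\varepsilon = B_\varepsilon z^\varepsilon + L_\varepsilon^{-1}g^\varepsilon.
\]
By the bound from Step 3, $z^\varepsilon$ is bounded, and it is precompact because both terms on the right are. Any $E$-limit $z$ satisfies
\[
z = B_0 z + L_0^{-1}g^0, \quad\text{that is,}\quad z=(L_0-F_0'(u_0^*))^{-1}g^0 .
\]
Since the limit is unique, the whole family converges, which gives the compact convergence of the inverses.
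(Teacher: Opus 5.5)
Your proposal is correct and follows the paper's route: the factorization $(L_\varepsilon - F_\varepsilon'(Eu_0^*))^{-1} = (I - B_\varepsilon)^{-1}L_\varepsilon^{-1}$, compact convergence of $B_\varepsilon = L_\varepsilon^{-1}F_\varepsilon'(Eu_0^*)$, and the identity $z^\varepsilon = B_\varepsilon z^\varepsilon + L_\varepsilon^{-1}g^\varepsilon$ for the compact convergence of the inverses. Your contradiction argument in Step 3 just spells out what the paper calls ``continuity of the inverse under compact convergence''; the only overstatement is that $v \neq 0$ is not delicate, since $v=0$ would give $\|w_k\|_{X^\alpha_{\varepsilon_k}} = \|w_k - Ev\|_{X^\alpha_{\varepsilon_k}} \to 0$, so no norm compatibility of $E$ is needed (and for fractional $\alpha$ that compatibility would not follow directly from the definitions anyway).
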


\demo The proof follows from the compact convergence of the resolvent operators and the Fréchet differentiability of $F_\varepsilon$. Formally, we observe the identity
\begin{equation*}
(L_\varepsilon - F_\varepsilon'(E u^*_0))^{-1} = (I - L_\varepsilon^{-1} F_\varepsilon'(E u^*_0))^{-1} L_\varepsilon^{-1}.
\end{equation*}
The compact convergence $L_\varepsilon^{-1} F_\varepsilon'(E u^*_0) \to L_0^{-1} F_0'(u_0^*) \ \ \text{in } \ \mathcal{L}(X^\alpha_\ep)$ can be established by an argument analogous to that used in the previous proposition for the sequence \( w^\varepsilon = L_\varepsilon^{-1} F'_\varepsilon(u^\varepsilon) \).

We now justify the uniform boundedness of the inverse operator. Since $u_0^* \in \mathcal{E}_0$ is a hyperbolic equilibrium, we have that $0 \notin \sigma(L_0 - F_0'(u_0^*))$, and hence the limit operator is invertible with bounded inverse. 

Using the compact convergence $L_\varepsilon^{-1} F_\varepsilon'(E u^*_0) \to L_0^{-1} F_0'(u_0^*)
\ \ \text{in }\ \mathcal{L}(X^\alpha_\ep),$ we obtain that the operators $I - L_\varepsilon^{-1} F_\varepsilon'(E u^*_0)$
converge to an invertible limit. Therefore, by the continuity of the inverse under compact convergence, there exists $\varepsilon_0 > 0$ and $C > 0$ such that
\[
\left\| (L_\varepsilon - F_\varepsilon'(E u^*_0))^{-1} \right\|_{\mathcal{L}(X^\alpha_\ep, X_\ep)} \leq C \quad \forall \ep \in (0,\ep_0].
\]

Let now \( \{ u^\varepsilon \}_{\varepsilon \in (0,\ep_0]} \subset X^\alpha_\ep\) with \( \| u^\varepsilon \|_{X^\alpha_\ep} \leq 1 \), and define $z^\varepsilon := (L_\varepsilon - F_\varepsilon'(E u^*_0))^{-1} u^\varepsilon,$ so that
\[
u^\varepsilon = (L_\varepsilon - F_\varepsilon'(E u^*_0)) z^\varepsilon = L_\varepsilon z^\varepsilon - F_\varepsilon'(E u^*_0) z^\varepsilon.
\]
Rewriting, $z^\varepsilon = L_\varepsilon^{-1} u^\varepsilon + L_\varepsilon^{-1} F_\varepsilon'(E u^*_0) z^\varepsilon.$

Using the boundedness of \( \{ u^\varepsilon \}_{\ep \in (0,\ep_0]} \) in \( X^\alpha_\ep \), the uniform boundedness of \( L_\varepsilon^{-1} \), and the compact convergence \( L_\varepsilon^{-1} F_\varepsilon'(E u^*_0) \to L_0^{-1} F_0'(u_0^*) \), we conclude (up to a subsequence) that \( z^\varepsilon \to z^0 \) in \( X^\alpha_\ep \) with $u^0 = L_0 z^0 - F_0'(u_0^*) z^{0} = (L_0 - F_0'(u_0^*)) z^0.$ Thus, \( z^0 = (L_0 - F_0'(u_0^*))^{-1} u^0 \), and the compact convergence follows.
\fimdemo

\begin{proposition}\label{propcontra}
Let $u_0^* \in \mathcal{E}_0$ be a hyperbolic equilibrium. Then, 
for all $\varepsilon \in (0,\ep_0]$, the map
\[
\Phi(u^\varepsilon) := (L_\varepsilon - F_\varepsilon'(Eu_0^*))^{-1} \left( F_\varepsilon(u^\varepsilon) - F_\varepsilon'(Eu_0^*) u^\varepsilon\right)
\]
is a contraction in a neighborhood of $u_0^*$ in $X^\alpha_\ep$, and hence $u^\varepsilon$ is an isolated equilibrium.
\end{proposition}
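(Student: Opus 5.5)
Fixed points of $\Phi$ are exactly equilibria: $\Phi(u)=u$ iff $(L_\varepsilon - F_\varepsilon'(Eu_0^*))u = F_\varepsilon(u) - F_\varepsilon'(Eu_0^*)u$, i.e. $L_\varepsilon u = F_\varepsilon(u)$. So it suffices to show that $\Phi$ maps a closed ball $B_\delta := \{u\in X^\alpha_\varepsilon : \|u - Eu_0^*\|_{X^\alpha_\varepsilon}\le\delta\}$ into itself and is a contraction there, with $\delta$ and the contraction constant independent of $\varepsilon\in(0,\varepsilon_0]$. Banach's fixed point theorem then yields a unique equilibrium $u^\varepsilon_*$ in $B_\delta$. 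In particular it is isolated, and it lies within $\delta$ of $Eu_0^*$.

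\textbf{Contraction estimate.} For $u,v\in B_\delta$ write
\[
\Phi(u)-\Phi(v) = (L_\varepsilon - F_\varepsilon'(Eu_0^*))^{-1}\Big(F_\varepsilon(u)-F_\varepsilon(v)-F_\varepsilon'(Eu_0^*)(u-v)\Big).
\]
By the mean value theorem in integral form, the bracket equals $\int_0^1 \big(F_\varepsilon'(v+s(u-v)) - F_\varepsilon'(Eu_0^*)\big)(u-v)\,ds$. We use the uniform bound on $(L_\varepsilon - F_\varepsilon'(Eu_0^*))^{-1}$ from Proposition \ref{propconcompoplinea}, read as an operator from $X_\varepsilon$ into $X^\alpha_\varepsilon$ (this is the natural direction, and it is what the compact convergence there gives). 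This yields
\[
\|\Phi(u)-\Phi(v)\|_{X^\alpha_\varepsilon}\le C\,\sup_{w\in B_\delta}\|F_\varepsilon'(w)-F_\varepsilon'(Eu_0^*)\|_{\mathcal L(X^\alpha_\varepsilon,X_\varepsilon)}\,\|u-v\|_{X^\alpha_\varepsilon}.
\]
It remains to show that $\rho(\delta):=\sup_\varepsilon\sup_{w\in B_\delta}\|F_\varepsilon'(w)-F_\varepsilon'(Eu_0^*)\|\to0$ as $\delta\to0$. This is the main obstacle, since it needs uniform-in-$\varepsilon$ continuity of the derivative of the Nemitskii operator.

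\textbf{Uniform continuity of $F_\varepsilon'$.} We first try the case $\alpha=\tfrac12$, using the $L^\infty$ bound $u_0^*\in L^\infty$ from \eqref{limPo} together with the $C^2$ regularity of $f$. For $h\in X^\alpha_\varepsilon$,
\[
|(f'(w)-f'(Eu_0^*))h| \le C\,(1+|w|^{\gamma-2}+|u_0^*|^{\gamma-2})\,|w-Eu_0^*|\,|h|.
\]
This follows from a growth bound on $f''$; if only \eqref{eq:growth} is available, we instead use continuity of $f'$ combined with Vitali's theorem. Applying Hölder with the exponents $2\gamma$ used in Lemma \ref{lema:nemytskii} and the $\varepsilon$-uniform embedding \eqref{eq:emb-2gamma} gives $\rho(\delta)\le C(\delta+\delta^{\gamma-1})$ when $\gamma\ge2$, or $C\delta^{\gamma-1}$-type Hölder control otherwise. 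In every case $\rho(\delta)\to0$ uniformly in $\varepsilon$. Fix $\delta$ so small that $C\rho(\delta)\le\tfrac12$.

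\textbf{Self-map property.} Write
\[
\Phi(u)-Eu_0^* = \big(\Phi(u)-\Phi(Eu_0^*)\big) + \big(\Phi(Eu_0^*)-Eu_0^*\big).
\]
The first term is at most $\tfrac12\delta$ by the contraction estimate. For the second term we use
\[
\Phi(Eu_0^*)-Eu_0^* = (L_\varepsilon-F_\varepsilon'(Eu_0^*))^{-1}\big(F_\varepsilon(Eu_0^*)-L_\varepsilon Eu_0^*\big).
\]
We bound it as $\|\Phi(Eu_0^*)-Eu_0^*\|\le C\|L_\varepsilon^{-1}F_\varepsilon(Eu_0^*) - Eu_0^*\|_{X^\alpha_\varepsilon}$, working in the resolvent form $u=L_\varepsilon^{-1}F_\varepsilon(u)$ to avoid applying $L_\varepsilon$ to $Eu_0^*$. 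Since $u_0^*=L_0^{-1}Mf(Eu_0^*)$, Corollary \ref{cor:conresolv} bounds this quantity by $C\varepsilon\|f(Eu_0^*)\|_{X_\varepsilon}$, which tends to $0$.

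Accordingly, we should run the fixed point argument on the equivalent map $\tilde\Phi(u)=u-(I-L_\varepsilon^{-1}F_\varepsilon'(Eu_0^*))^{-1}(u-L_\varepsilon^{-1}F_\varepsilon(u))$. Its invertibility is uniform by Proposition \ref{propconcompoplinea}, and the two estimates above carry over verbatim. Hence, for $\varepsilon$ small, $\Phi(B_\delta)\subset B_\delta$, the contraction holds, and the unique fixed point is an isolated equilibrium with $\|u^\varepsilon_*-Eu_0^*\|_{X^\alpha_\varepsilon}\le C\varepsilon$.
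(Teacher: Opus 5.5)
Your proposal is correct and follows the same route as the paper's very brief proof: Banach's fixed point theorem on a ball around $Eu_0^*$, with the contraction constant controlled by the uniform bound on $(L_\varepsilon - F_\varepsilon'(Eu_0^*))^{-1}$ from Proposition~\ref{propconcompoplinea} and the continuity of $F_\varepsilon'$. You also make explicit the self-map step the paper leaves implicit, via the identity $\Phi(u)-u=(I-L_\varepsilon^{-1}F_\varepsilon'(Eu_0^*))^{-1}(L_\varepsilon^{-1}F_\varepsilon(u)-u)$ (so your $\tilde\Phi$ is literally $\Phi$) and Corollary~\ref{cor:conresolv}, which also yields the $O(\varepsilon)$ rate of Theorem~\ref{teoconvequil}.

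The one soft spot is the modulus of continuity of $F_\varepsilon'$. Your quantitative bounds need a growth condition on $f''$ that is not assumed, and the ``$\delta^{\gamma-1}$'' bound is vacuous at $\gamma=1$. So it is the Vitali/continuity argument that actually carries the proof. That argument is uniform in $\varepsilon$ because every $X^{1/2}_\varepsilon$-ball around the fixed function $Eu_0^*$ lies inside one ball of $L^p(\Omega)$, with $p\ge 2\gamma$ and $p>2$, by the $\varepsilon$-independent embedding of Section~\ref{sec:3}.
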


\demo This follows from Banach's Fixed Point Theorem, using the continuous Fréchet differentiability of $F_\varepsilon$ and the compact convergence of the inverse operators $(L_\varepsilon - F_\varepsilon'(u_0^*))^{-1} \to (L_0 - F_0'(u_0^*))^{-1}$ as $\varepsilon \to 0$. A suitable estimate shows that $\Phi$ becomes a contraction map for small enough $\varepsilon$, which completes the argument.
\fimdemo

\begin{lemma}
If all equilibria in $\mathcal{E}_0$ are isolated, then for $\varepsilon \in (0,\ep_0]$, the set $\mathcal{E}_\varepsilon$ is finite and every equilibrium is hyperbolic.
\end{lemma}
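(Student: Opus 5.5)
The plan is to reduce the statement to the local results already proved near a hyperbolic limit equilibrium (Propositions~\ref{propconcompoplinea} and~\ref{propcontra}), using Theorem~\ref{thm:53} to localize $\mathcal{E}_\varepsilon$. The argument has three parts.

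\textbf{Step 1: $\mathcal{E}_0$ is finite.} The set $\mathcal{E}_0$ is bounded in $H^1_a(0,1)$ by the energy estimate in the proof of Theorem~\ref{thm:53}, applied with $\varepsilon=0$. Since $u=L_0^{-1}F_0(u)$ on $\mathcal{E}_0$ and $L_0^{-1}$ is compact by Theorem~\ref{baraocp}, $\mathcal{E}_0$ is compact in $X_0^\alpha$. A compact set all of whose points are isolated is finite, so we may write $\mathcal{E}_0=\{u_1^*,\dots,u_m^*\}$.

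\textbf{Step 2: each $u_j^*$ is hyperbolic.} This is the step I expect to be the main obstacle, because isolation does not in general imply $0\notin\sigma(L_0-F_0'(u_j^*))$. I would first try to exploit the one-dimensional Sturm--Liouville structure. Any $v$ in the kernel solves $-(A_{01}a^nv_x)_x+a^n(1-f'(u_j^*))v=0$ with $\lim_{x\to0^+}a^nv_x=0$. The condition $v_x(1)=0$ together with the value $v(1)$ determines $v$ uniquely, so the kernel is at most one-dimensional and spanned by some $\phi$.

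If the kernel were nontrivial, I would perform a Lyapunov--Schmidt reduction of $u=L_0^{-1}F_0(u)$ near $u_j^*$ onto $\mathrm{span}\{\phi\}$. The equilibria near $u_j^*$ then correspond to the zeros of a scalar map $g(s)$ with $g(0)=g'(0)=0$. I would try to contradict isolation by showing that $g$ vanishes identically; the tools would be the gradient structure (the Lyapunov energy) and the analyticity, or at least $C^2$ regularity, of $f$.

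If this route fails, the honest fallback is to note that the statement needs hyperbolicity of $\mathcal{E}_0$, as in the discussion preceding Proposition~\ref{propconcompoplinea}. In that case Step 2 would be recorded as an assumption implicit in the paper rather than a consequence of isolation.

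\textbf{Step 3: the perturbed problem.} Fix $\delta>0$ so small that the balls $B_j$ in $X^\alpha_\varepsilon$ of radius $\delta$ around $Eu_j^*$ are disjoint. I also require each $B_j$ to lie inside the neighborhood where $\Phi$ from Proposition~\ref{propcontra} is a contraction.

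First I show that $\mathcal{E}_\varepsilon\subset\bigcup_jB_j$ for small $\varepsilon$. Otherwise there would be a sequence $u^\varepsilon\in\mathcal{E}_\varepsilon$ outside all $B_j$. By Theorem~\ref{thm:53} it has a subsequence $E$-converging to some point of $\mathcal{E}_0$, which is a contradiction. Inside each $B_j$, the fixed points of $\Phi$ are exactly the equilibria, and the contraction property gives exactly one, $u_j^\varepsilon$. Hence $\mathcal{E}_\varepsilon$ has exactly $m$ elements.

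Finally, for hyperbolicity of $u_j^\varepsilon$ I write
\[
L_\varepsilon-F_\varepsilon'(u_j^\varepsilon)=\bigl(L_\varepsilon-F_\varepsilon'(Eu_j^*)\bigr)\Bigl[I-\bigl(L_\varepsilon-F_\varepsilon'(Eu_j^*)\bigr)^{-1}\bigl(F_\varepsilon'(u_j^\varepsilon)-F_\varepsilon'(Eu_j^*)\bigr)\Bigr].
\]
The bracketed perturbation is small in operator norm, by two facts. First, Proposition~\ref{propconcompoplinea} gives the uniform bound $\|(L_\varepsilon-F_\varepsilon'(Eu_j^*))^{-1}\|\le C$. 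Second, $\|F_\varepsilon'(u_j^\varepsilon)-F_\varepsilon'(Eu_j^*)\|_{\mathcal{L}(X^\alpha_\varepsilon,X_\varepsilon)}\to0$. This follows from the uniform continuity of $F'_\varepsilon$ on bounded sets (Lemma~\ref{lema:nemytskii} and the Fréchet differentiability remark), together with $u_j^\varepsilon\to Eu_j^*$.

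A Neumann series then inverts the bracket, so $0\notin\sigma(L_\varepsilon-F'_\varepsilon(u_j^\varepsilon))$ for every $\varepsilon\in(0,\varepsilon_0]$ after possibly shrinking $\varepsilon_0$.
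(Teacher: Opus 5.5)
Your Step~2 cannot be completed, by Lyapunov--Schmidt or by any other argument, because the lemma as literally stated is false. Isolation does not imply hyperbolicity, either for $\varepsilon=0$ or for $\varepsilon>0$. Take $f(s)=\frac{s(1-s^2)}{1+s^2}$. It is smooth with $f'$ bounded, so \eqref{eq:growth} holds with $\gamma=1$, and \eqref{eq:diss} holds with $m_f=1$. Since $f(s)-s=-\frac{2s^3}{1+s^2}$, testing the equilibrium equation with $u$ gives $\int_\Omega A^\varepsilon\nabla^\varepsilon u\cdot\nabla^\varepsilon u\,dxdy+\int_\Omega \frac{2u^4}{1+u^2}\,dxdy=0$, and the analogous identity with weight $a^n$ holds for $\varepsilon=0$. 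Hence $\mathcal{E}_\varepsilon=\{0\}$ for every $\varepsilon\in[0,\varepsilon_0]$, so the hypothesis of the lemma holds trivially. But $f'(0)=1$, so $L_\varepsilon-F_\varepsilon'(0)=L_\varepsilon-I$ annihilates the constant functions, and $0\in\sigma(L_\varepsilon-F_\varepsilon'(0))$ for every $\varepsilon$. In your reduction the kernel is spanned by $\phi\equiv1$, and the bifurcation function is $g(s)=c\,s^3+O(s^5)$ with $c\neq0$. Its zero is isolated, so isolation yields no contradiction.

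Your fallback is therefore the only correct reading, and it is exactly what the paper's proof does. It invokes the invertibility of $L_0-F_0'(u_0^*)$ at each $u_0^*\in\mathcal{E}_0$ and relies on Propositions~\ref{propconcompoplinea} and~\ref{propcontra}, which are stated for hyperbolic $u_0^*$. So the lemma should assume every $u_0^*\in\mathcal{E}_0$ is hyperbolic, which already implies isolated. Under that assumption your Steps~1 and~3 are the paper's argument: finiteness from compactness and discreteness of $\mathcal{E}_0$, and persistence of invertibility near each $Eu_0^*$. You make explicit what the paper leaves implicit, namely the localization via Theorem~\ref{thm:53}, uniqueness in each ball via $\Phi$, and the Neumann series.

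Three small repairs are needed:
\begin{itemize}
\item The a priori bound you borrow from the proof of Theorem~\ref{thm:53} uses $\sup|f|$, which is unavailable. Obtain it instead from \eqref{eq:diss}, since $\int_\Omega f(u)u\le|\Omega|\max_{|s|\le m_f}|f(s)s|$.
\item Existence of an equilibrium in each ball, needed only for the exact count $m$, requires $\Phi$ to map the ball into itself, as in Theorem~\ref{teoconvequil}. The contraction property alone gives only uniqueness.
\item Your Neumann series needs the inverse bounded in $\mathcal{L}(X_\varepsilon,X^\alpha_\varepsilon)$, not in $\mathcal{L}(X^\alpha_\varepsilon,X_\varepsilon)$ as printed in Proposition~\ref{propconcompoplinea}.
\end{itemize}
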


\demo This follows from Propositions \ref{propconcompoplinea} and \ref{propcontra}, which establish the compact convergence of linearized operators, together with the continuous Fréchet differentiability of $F_\varepsilon$.

In particular, the invertibility of $L_0 - F_0'(u_0^*)$ at each $u_0^* \in \mathcal{E}_0$ implies that the operators $L_\varepsilon - F_\varepsilon'(u^\varepsilon)$ remain invertible for small $\varepsilon$ and $u^\varepsilon$ close to $u_0^*$. This ensures that the nearby equilibria is hyperbolic. Since $\mathcal{E}_0$ is discrete and compactness, it guarantees that the set $\mathcal{E}_\varepsilon$ must be finite.
\fimdemo


    


\begin{corollary}
Suppose $u_0^* \in \mathcal{E}_0$ is a hyperbolic equilibrium. Then, there exist $\delta > 0$ and $\varepsilon_0 > 0$ such that, for all $\varepsilon \in (0,\varepsilon_0]$, the set of equilibria $\mathcal{E}_\varepsilon$ in the $\delta$-neighborhood of $E u_0^*$ consists of a unique equilibrium $u^\varepsilon$ satisfying $u^\varepsilon \to Eu_0^*$ in $X^\al_\ep$.
\end{corollary}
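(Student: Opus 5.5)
The plan is to obtain existence of an equilibrium near $Eu_0^*$ from the fixed-point map of Proposition \ref{propcontra}, uniqueness from the contraction property, and the convergence $u^\varepsilon \to Eu_0^*$ from the resolvent estimates (Corollaries \ref{cor:conresolv} and \ref{cor:convergencia}). Set $B_\varepsilon := (L_\varepsilon - F_\varepsilon'(Eu_0^*))^{-1}$. By Proposition \ref{propconcompoplinea}, $B_\varepsilon$ exists and $\|B_\varepsilon\|$ is bounded independently of $\varepsilon$. A function $u^\varepsilon$ lies in $\mathcal{E}_\varepsilon$ if and only if $u^\varepsilon = \Phi_\varepsilon(u^\varepsilon)$, where
\[
\Phi_\varepsilon(u) = B_\varepsilon\bigl(F_\varepsilon(u) - F_\varepsilon'(Eu_0^*)u\bigr).
\]
Indeed, $(L_\varepsilon - F_\varepsilon'(Eu_0^*))u = F_\varepsilon(u) - F_\varepsilon'(Eu_0^*)u$ reduces to $L_\varepsilon u = F_\varepsilon(u)$. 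So it suffices to find $\delta>0$, independent of $\varepsilon$, such that $\Phi_\varepsilon$ maps the closed ball $\overline{B}_\delta(Eu_0^*)\subset X^\alpha_\varepsilon$ into itself and is a contraction there.

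\textbf{Contraction estimate.} For $u,v$ in the ball,
\[
\Phi_\varepsilon(u)-\Phi_\varepsilon(v) = B_\varepsilon\int_0^1 \bigl[F_\varepsilon'(v+s(u-v)) - F_\varepsilon'(Eu_0^*)\bigr](u-v)\,ds .
\]
By the uniform continuous Fréchet differentiability of $F_\varepsilon$ (the Remark in Section \ref{sec:5} together with Lemma \ref{lema:nemytskii}, with constants uniform in $\varepsilon$), the norm of the bracket is at most $\omega(\delta)$, where $\omega(\delta)\to0$ uniformly in $\varepsilon$. Choosing $\delta$ with $C\omega(\delta)\le 1/2$ gives Lipschitz constant $1/2$.

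\textbf{Self-mapping.} We need the defect $\|\Phi_\varepsilon(Eu_0^*) - Eu_0^*\|_{X^\alpha_\varepsilon}\to0$. Since $u_0^*=L_0^{-1}F_0(u_0^*)$, and $E$ commutes with the Nemitskii operator ($F_\varepsilon(Eu_0^*)=Ef(u_0^*)$, $MEf(u_0^*)=f(u_0^*)$), we have
\[
\Phi_\varepsilon(Eu_0^*) - Eu_0^* = B_\varepsilon\bigl(F_\varepsilon(Eu_0^*) - L_\varepsilon Eu_0^*\bigr) = B_\varepsilon L_\varepsilon\bigl(L_\varepsilon^{-1}Ef(u_0^*) - EL_0^{-1}MEf(u_0^*)\bigr),
\]
understood weakly. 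This is better handled via the identity
\[
\Phi_\varepsilon(u)-u = B_\varepsilon\bigl(I - L_\varepsilon^{-1}F_\varepsilon'(Eu_0^*)\bigr)^{-1}\ldots
\]
Alternatively, write $u = Eu_0^*$ and $w^\varepsilon := L_\varepsilon^{-1}F_\varepsilon(Eu_0^*)$. Corollary \ref{cor:conresolv} gives $\|w^\varepsilon - Eu_0^*\|_{X^\alpha_\varepsilon}\le C\varepsilon\|f(u_0^*)\|$, which is finite by \eqref{limPo}. Then
\[
\Phi_\varepsilon(Eu_0^*) - Eu_0^* = (I - L_\varepsilon^{-1}F_\varepsilon'(Eu_0^*))^{-1}(w^\varepsilon - Eu_0^*),
\]
and this inverse is uniformly bounded on $X^\alpha_\varepsilon$ by Proposition \ref{propconcompoplinea}. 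Hence the defect is $O(\varepsilon)$. For $\varepsilon$ small, the defect is below $\delta/2$, and together with the $1/2$-contraction this makes $\Phi_\varepsilon$ map the ball into itself.

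\textbf{Conclusion and main obstacle.} Banach's theorem then gives a unique fixed point $u^\varepsilon$ in $\overline{B}_\delta(Eu_0^*)$, that is, a unique equilibrium in that neighbourhood. The standard contraction bound $\|u^\varepsilon - Eu_0^*\|\le 2\|\Phi_\varepsilon(Eu_0^*)-Eu_0^*\|\le C\varepsilon$ gives convergence, in fact at rate $\varepsilon$. The main obstacle is the uniformity in $\varepsilon$ of the modulus $\omega(\delta)$ for $F_\varepsilon'$ in $\mathcal{L}(X^\alpha_\varepsilon,X_\varepsilon)$. I would obtain it by combining the $L^\infty$ bound \eqref{eq:liminf} (truncating $f$ outside a large interval, which does not affect equilibria) with the uniform embedding \eqref{eq:emb-2gamma}, mimicking the Hölder argument of Lemma \ref{lema:nemytskii} applied to $f'$ with $f\in C^2$.
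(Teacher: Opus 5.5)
Your proposal is correct and follows essentially the paper's route. There, the corollary rests on the contraction $\Phi$ of Proposition \ref{propcontra} together with the uniformly bounded linearized inverses of Proposition \ref{propconcompoplinea}. Your defect identity $\Phi_\varepsilon(Eu_0^*)-Eu_0^*=(I-L_\varepsilon^{-1}F_\varepsilon'(Eu_0^*))^{-1}\bigl(L_\varepsilon^{-1}F_\varepsilon(Eu_0^*)-Eu_0^*\bigr)$, combined with Corollary \ref{cor:conresolv}, makes explicit the self-mapping step that the paper only asserts, and it gives directly the $O(\varepsilon)$ rate that the paper proves separately in Theorem \ref{teoconvequil}.
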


Finally, we obtain the rate of convergence under condition \eqref{H3}. 
\begin{theorem}\label{teoconvequil}
 Assume conditions \eqref{H1}, \eqref{H2} and \eqref{H3}. Suppose that \( u_0^* \in \mathcal{E}_0 \) is a hyperbolic equilibrium. Then, there exist  a constant \( C > 0 \) such that, for all \( \varepsilon \in (0, \varepsilon_0] \), the set of equilibria \( \mathcal{E}_\varepsilon \) of the equation $L_\varepsilon u^\varepsilon = F_\varepsilon(u^\varepsilon)$ contains exactly one equilibrium \( u_\varepsilon^* \) sufficiently close to \( E u_0^* \), and this equilibrium satisfies the following estimate $\| u_\varepsilon^* - E u_0^* \|_{X^\al_\ep} \leq C \varepsilon.$
\end{theorem}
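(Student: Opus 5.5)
Existence and uniqueness of $u^*_\varepsilon$ near $Eu_0^*$ are already given by the preceding Corollary and Proposition \ref{propcontra}. So the only new content is the rate $\mathcal O(\varepsilon)$. The plan is to write both equilibria as fixed points of resolvent equations and subtract. Since $u_\varepsilon^* = L_\varepsilon^{-1}F_\varepsilon(u_\varepsilon^*)$ and $u_0^* = L_0^{-1}F_0(u_0^*)$, and since $M E = I$ and $F_\varepsilon(Eu) = E F_0(u)$ (the Nemitskii operators act pointwise and $E$ extends constantly in $y$), we decompose
\begin{align*}
u_\varepsilon^* - Eu_0^* &= \bigl(L_\varepsilon^{-1} - E L_0^{-1} M\bigr) F_\varepsilon(Eu_0^*) \\
&\quad + L_\varepsilon^{-1}\bigl(F_\varepsilon(u_\varepsilon^*) - F_\varepsilon(Eu_0^*)\bigr).
\end{align*}
The first term is bounded in $X^\alpha_\varepsilon$ by $C\varepsilon\|F_\varepsilon(Eu_0^*)\|_{X_\varepsilon}$, using Corollary \ref{cor:conresolv} (i.e.\ Theorem \ref{Theo23} in rescaled form) together with $X^{1/2}_\varepsilon\hookrightarrow X^\alpha_\varepsilon$. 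The factor $\|F_\varepsilon(Eu_0^*)\|_{X_\varepsilon}$ is bounded independently of $\varepsilon$, because $u_0^*$ is bounded in $L^\infty$ by \eqref{limPo}.

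The second term cannot be absorbed directly, since the Lipschitz constant of $L_\varepsilon^{-1}F_\varepsilon$ need not be small. Instead I will linearize at $Eu_0^*$ and use hyperbolicity. Write
\[
F_\varepsilon(u_\varepsilon^*) - F_\varepsilon(Eu_0^*) = F_\varepsilon'(Eu_0^*)(u_\varepsilon^*-Eu_0^*) + R_\varepsilon,
\]
with $\|R_\varepsilon\|_{X_\varepsilon}\le \rho(\|u^*_\varepsilon-Eu^*_0\|_{X^\alpha_\varepsilon})\,\|u^*_\varepsilon-Eu^*_0\|_{X^\alpha_\varepsilon}$ and $\rho(s)\to0$ as $s\to 0$, uniformly in $\varepsilon$. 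This uniformity comes from $f\in C^2$ and the uniform $L^\infty$ bound \eqref{eq:liminf} on equilibria, which lie in $\mathcal A_\varepsilon$. Applying $L_\varepsilon$ to the decomposition and rearranging gives
\[
(L_\varepsilon - F_\varepsilon'(Eu_0^*))(u_\varepsilon^*-Eu_0^*) = L_\varepsilon\bigl(L_\varepsilon^{-1}-EL_0^{-1}M\bigr)F_\varepsilon(Eu_0^*) + R_\varepsilon.
\]
Equivalently, in fixed-point form,
\[
(I - L_\varepsilon^{-1}F_\varepsilon'(Eu_0^*))(u_\varepsilon^*-Eu_0^*) = (L_\varepsilon^{-1}-EL_0^{-1}M)F_\varepsilon(Eu_0^*) + L_\varepsilon^{-1}R_\varepsilon.
\]
The second form is preferable because it avoids applying $L_\varepsilon$ to the resolvent difference.

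By Proposition \ref{propconcompoplinea}, the operator $I - L_\varepsilon^{-1}F_\varepsilon'(Eu_0^*)$ is invertible on $X^\alpha_\varepsilon$ with inverse bounded uniformly in $\varepsilon$. This uses the compact convergence together with the standard fact that compact convergence to an invertible limit gives uniformly bounded inverses. Therefore
\[
\|u_\varepsilon^*-Eu_0^*\|_{X^\alpha_\varepsilon} \le C\varepsilon + C\,\rho\bigl(\|u^*_\varepsilon-Eu^*_0\|_{X^\alpha_\varepsilon}\bigr)\,\|u^*_\varepsilon-Eu^*_0\|_{X^\alpha_\varepsilon}.
\]
The preceding Corollary gives $u_\varepsilon^*\to Eu_0^*$ in $X^\alpha_\varepsilon$. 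Hence for $\varepsilon$ small we have $C\rho\le 1/2$, which yields $\|u_\varepsilon^*-Eu_0^*\|_{X^\alpha_\varepsilon}\le 2C\varepsilon$. For $\varepsilon$ bounded away from $0$, the estimate is trivial after enlarging $C$, since both equilibria are uniformly bounded.

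The main obstacle is the uniform control of the remainder $R_\varepsilon$ in $X_\varepsilon$ in terms of the $X^\alpha_\varepsilon$-norm. For $\alpha<1/2$ one cannot directly use the embedding \eqref{eq:emb-2gamma}. I would first try to use the $L^\infty$ bounds on both equilibria, which make $f'$ uniformly Lipschitz on the relevant range. Then
\[
|R_\varepsilon|\le C|u_\varepsilon^*-Eu_0^*|^2\le C'|u_\varepsilon^*-Eu_0^*|
\]
pointwise. To get smallness, I would combine the convergence in $X^{1/2}_\varepsilon$ (bootstrapped from the resolvent equation) with interpolation against the $L^\infty$ bound. A secondary point to check is that Proposition \ref{propconcompoplinea} indeed yields uniform invertibility on $X^\alpha_\varepsilon$, not only the $\mathcal L(X^\alpha_\varepsilon,X_\varepsilon)$ bound as stated there.
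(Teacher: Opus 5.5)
Your argument is correct, and it reaches the $\mathcal O(\varepsilon)$ rate through a different decomposition than the paper's. The paper writes both equilibria as fixed points of the \emph{linearized} maps, $u^*_\varepsilon=(L_\varepsilon-F_\varepsilon'(Eu_0^*))^{-1}\bigl(F_\varepsilon(u^*_\varepsilon)-F_\varepsilon'(Eu_0^*)u^*_\varepsilon\bigr)$ and the analogue for $u_0^*$. It then adds and subtracts $E(L_0-F_0'(u_0^*))^{-1}M(\cdots)$ and defers the estimates to \cite[Theorem 6.8]{patricia}.

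That route needs an $\mathcal O(\varepsilon)$ bound for the difference of linearized inverses $(L_\varepsilon-F_\varepsilon'(Eu_0^*))^{-1}-E(L_0-F_0'(u_0^*))^{-1}M$, applied to an $\varepsilon$-dependent datum. The paper does not prove this before this point: the linearized resolvent estimate \eqref{ConvResolLin} appears later and takes this very theorem as input. Its second term, once you use $F_\varepsilon E=EF_0$, is simply $-E(L_0-F_0'(u_0^*))^{-1}MR_\varepsilon$, which is your quadratic remainder.

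You instead apply the unperturbed rate of Corollary~\ref{cor:conresolv} once, to the fixed $y$-independent datum $EF_0(u_0^*)$. You move the linear part to the left-hand side, so you need only a uniform bound, not a rate, for $(I-L_\varepsilon^{-1}F_\varepsilon'(Eu_0^*))^{-1}$. This is more economical, and it makes the absorption of the remainder through the qualitative convergence explicit.

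The obstacle you flag is resolved by running the whole estimate in $X^{1/2}_\varepsilon$; set $w:=u^*_\varepsilon-Eu^*_0$.
\begin{itemize}
\item Since $L_\varepsilon\ge I$, you have $\|L_\varepsilon^{-1}\|_{\mathcal L(X_\varepsilon,X^{1/2}_\varepsilon)}\le 1$.
\item Your own decomposition, together with the Lipschitz bound of $f$ on the range given by \eqref{eq:liminf} and \eqref{limPo}, upgrades $w\to0$ from $X_\varepsilon$ to $X^{1/2}_\varepsilon$.
\item The uniform embedding $H^1_\varepsilon(\Omega)\hookrightarrow L^q(\Omega)$ with $q>2$ from Section~\ref{sec:3}, combined with these $L^\infty$ bounds, gives $\|R_\varepsilon\|_{X_\varepsilon}\le C\|w\|_{L^4}^2\le C\|w\|_{X^{1/2}_\varepsilon}^{\min\{q,4\}/2}$, which is superlinear.
\item Uniform invertibility on $X^{1/2}_\varepsilon$ follows from uniform invertibility on $X_\varepsilon$, because $L_\varepsilon^{-1}F_\varepsilon'(Eu_0^*)$ maps $X_\varepsilon$ into $X^{1/2}_\varepsilon$ with uniformly bounded norm.
\item Finally, $\|\cdot\|_{X^\alpha_\varepsilon}\le\|\cdot\|_{X^{1/2}_\varepsilon}$, which transfers the rate to $X^\alpha_\varepsilon$.
\end{itemize}
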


\demo Since \( u_0^* \in \mathcal{E}_0 \) is hyperbolic, the linearized operator \( L_0 - F_0'(u_0^*) \) is invertible with a bounded inverse. By Proposition \ref{propconcompoplinea}, $L_\varepsilon - F_\varepsilon'(E u_0^*)$ remains invertible, with uniformly bounded inverse, $\left\| (L_\varepsilon - F_\varepsilon'(E u_0^*))^{-1} \right\|_{\mathcal{L}(X^\alpha_\ep, X_\ep)} \leq C,$ for all $\ep \in(0,\ep_0].$

By Proposition \ref{propcontra}, we find that \( \Phi \) is a contraction in a neighborhood of \( Eu_0^* \). Then, by the Banach's Fixed Point Theorem, there exists a unique fixed point \( u_\varepsilon\) sufficiently nearby of the $Eu_0^*$, which is a solution to the equation \( L_\varepsilon u_\ep^* = F_\varepsilon(u_\ep^*) \).

To obtain the convergence rate of the solutions, observe that
\[
u^*_0 = (L_0 - F_0'(u^*_0))^{-1} (F_0(u^*_0) - F_0'(u^*_0)(u^*_0)),
\]
\[
u_\varepsilon^* = (L_\varepsilon - F_\varepsilon'(E u^*_0))^{-1} (F_\varepsilon(u_\varepsilon^*) - F_\varepsilon'(E u^*_0)(u_\varepsilon^*)).
\]

Adding and subtracting \( E (L_0 - F_0'(u^*_0))^{-1} M (F_\varepsilon(u_\varepsilon^*) - F_\varepsilon'(E u^*_0)(u_\varepsilon^*)) \), we obtain
\begin{align*}
u_\varepsilon^* - E u^*_0 
&= (L_\varepsilon - F_\varepsilon'(E u^*_0))^{-1} [F_\varepsilon(u_\varepsilon^*) - F_\varepsilon'(E u^*_0)(u_\varepsilon^*)] \\
&\quad - E (L_0 - F_0'(u^*_0))^{-1} [F_0(u^*_0) - F_0'(u^*_0)(u^*_0)] \\
&= \left( (L_\varepsilon - F_\varepsilon'(E u^*_0))^{-1} - E (L_0 - F_0'(u^*_0))^{-1} M \right) (F_\varepsilon(u_\varepsilon^*) - F_\varepsilon'(E u^*_0)(u_\varepsilon^*)) \\
&\quad + E (L_0 - F_0'(u^*_0))^{-1} \left( F_0(u^*_0) - F_0'(u^*_0)(u^*_0) - M (F_\varepsilon(u_\varepsilon^*) - F_\varepsilon'(E u^*_0)(u_\varepsilon^*)) \right).
\end{align*}
Therefore, we obtain the following estimate
\begin{align*}
\| u_\varepsilon^* - E u^*_0 \|_{X^\al_\ep} 
&\leq \| (L_\varepsilon - F_\varepsilon'(E u^*_0))^{-1} - E (L_0 - F_0'(u^*_0))^{-1} M  F_\varepsilon(u_\varepsilon^*) - F_\varepsilon'(E u^*_0)(u_\varepsilon^*) \|_{X^\al_\ep} \\
&\quad + \| E (L_0 - F_0'(u^*_0))^{-1} F_0(u^*_0) - F_0'(u^*_0)(u^*_0) - M(F_\varepsilon(u_\varepsilon^*) - F_\varepsilon'(E u^*_0)(u_\varepsilon^*)) \|_{X^\al_\ep}.
\end{align*}

To conclude the result, it is necessary to combine the estimates obtained in the previous results, particularly those controlling the convergence of the operators \( L_\varepsilon^{-1} \to L_0^{-1} \), and to apply the continuity of the nonlinear operator \( F \) and its derivatives. These arguments ensure that both terms on the right-hand side tend to zero as \( \varepsilon \to 0 \), and more precisely, they lead to the estimate $\| u_\varepsilon^* - E u_0^* \|_{X^\al_\ep} \leq C \varepsilon,$ which establishes the convergence rate. For more details, see for instance \cite[Theorem 6.8]{patricia}.
\fimdemo

\subsection{Convergence of linear semigroups}
Since $L_\varepsilon$ is a sectorial operator, the operator $-L_\varepsilon$ is the infinitesimal generator of a linear analytic semigroup, denoted by $e^{-L_\varepsilon t}$, which is given by the Dunford integral representation
\begin{equation*}
e^{-L_\varepsilon t} = \frac{1}{2\pi i} \int_{\Gamma} (\lambda I + L_\varepsilon)^{-1} e^{\lambda t} \, d\lambda,
\end{equation*}
where $\Gamma$ is a contour in the resolvent set of $-L_\varepsilon$, that is, $\Gamma \subset \rho(-L_\varepsilon)$, with $\arg \lambda \to \pm \theta$ as $|\lambda| \to \infty,$ for some $\theta \in \left( \tfrac{\pi}{2}, \pi \right),$ (see \cite{henry}).

Since $L_\varepsilon$, $\ep\in[0,\ep_0]$, is self-adjoint, positive with compact resolvent, its spectrum is positive with, $$\sigma(L_\varepsilon) = \{ \lambda_{m}^{\varepsilon} \}^{\infty}_{m=1} \,\,\, \text{and} \,\,\, 0 < c \leq \lambda_{1}^{\varepsilon} \leq \lambda_{2}^{\varepsilon} \leq \cdots \leq \lambda_{m}^{\varepsilon} \leq \cdots$$ 
Moreover, if $\Re \left( \sigma(L_{\ep}) \right) > \delta > 0$, there exists a constant $M_{\delta}$, independent of $\ep$, such that
\begin{equation}\label{des:semHenry}
    ||e^{-L_{\ep}t}||_{\mathcal{L}(X_{0},X_{\varepsilon}^{\alpha})} \leq M_{\delta} e^{-\delta t} t^{-\alpha}, \,\, t > 0
\end{equation}
\noindent (see \cite{henry}).

\begin{lemma} \label{cpto}
If $K_0$ is a compact set of the complex plane with $K_0 \subset \rho(-L_0)$ and the estimate in the Corollary \ref{cor:convergencia} is satisfied, then there exists $\varepsilon_0(K_0) > 0$ such that $K_0 \subset \rho(-L_\varepsilon)$ for all $0 < \varepsilon \leq \varepsilon_0(K_0)$. Moreover, we have the estimates
\begin{equation*}
    \left\| (\lambda I + L_\varepsilon)^{-1} \right\|_{\mathcal{L}(X_\varepsilon, X_\varepsilon^\al)} \leq C(K_0), \quad
    \left\| (\lambda I + L_\varepsilon)^{-1} \right\|_{\mathcal{L}(X_\varepsilon, X_\varepsilon)} \leq C(K_0),
\end{equation*}
for all $\lambda \in K_0$ and $0 < \varepsilon \leq \varepsilon_0(K_0)$.
\end{lemma}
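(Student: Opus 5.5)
The plan is a perturbation argument built on the resolvent identity. We compare $(\lambda I+L_\varepsilon)^{-1}$ with the lifted limit resolvent $E(\lambda I+L_0)^{-1}M$ and use Corollary~\ref{cor:convergencia} to absorb the error.

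For $\lambda\in\rho(-L_0)$ and $\lambda\in\rho(-L_\varepsilon)$ we have
\[
(\lambda I+L_\varepsilon)^{-1}=L_\varepsilon^{-1}\bigl(I+\lambda L_\varepsilon^{-1}\bigr)^{-1}.
\]
So $\lambda\in\rho(-L_\varepsilon)$ is equivalent to invertibility of $I+\lambda L_\varepsilon^{-1}$ on $X_\varepsilon$. In the limit, $I+\lambda L_0^{-1}$ is invertible on $X_0$ for each $\lambda\in K_0$, since $\lambda\in\rho(-L_0)$ and $L_0^{-1}$ is compact by Theorem~\ref{baraocp} and Proposition~\ref{dirsuml2a}. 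The map $\lambda\mapsto\|(I+\lambda L_0^{-1})^{-1}\|$ is continuous on the compact set $K_0$, hence bounded there by some $C_0(K_0)$.

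Next we build an approximate inverse on $X_\varepsilon$:
\[
S_\varepsilon(\lambda):=E(I+\lambda L_0^{-1})^{-1}M+(I-EM).
\]
Here $EM$ is the orthogonal projection of $X_\varepsilon=L^2(\Omega)$ onto functions independent of $y$, with $ME=I$ and $\|E\|,\|M\|\le C$ uniformly. One computes
\[
(I+\lambda L_\varepsilon^{-1})S_\varepsilon(\lambda)=I+\lambda\bigl(L_\varepsilon^{-1}-EL_0^{-1}M\bigr)S_\varepsilon(\lambda)+\lambda EL_0^{-1}M(I-EM).
\]
The last term vanishes because $M(I-EM)=0$. The error term has norm at most $|\lambda|\,\nu(\varepsilon)\,\|S_\varepsilon(\lambda)\|\le C(K_0)\nu(\varepsilon)$, where we use Corollary~\ref{cor:convergencia} with the trivial bound $\|\cdot\|_{X_\varepsilon}\le\|\cdot\|_{X^\alpha_\varepsilon}$. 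Choose $\varepsilon_0(K_0)$ so that $C(K_0)\nu(\varepsilon)\le\frac12$. A Neumann series then gives a right inverse of $I+\lambda L_\varepsilon^{-1}$ with norm at most $2\|S_\varepsilon(\lambda)\|$. Since $L_\varepsilon^{-1}$ is compact, $I+\lambda L_\varepsilon^{-1}$ is Fredholm of index zero, so a right inverse is a two-sided inverse (self-adjointness of $L_\varepsilon$ gives the same conclusion directly). Hence $K_0\subset\rho(-L_\varepsilon)$, and
\[
\|(I+\lambda L_\varepsilon^{-1})^{-1}\|_{\mathcal L(X_\varepsilon)}\le 2(C\,C_0(K_0)+2).
\]

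Finally we derive the two estimates. The first step is
\[
\|(\lambda+L_\varepsilon)^{-1}\|_{\mathcal L(X_\varepsilon,X^\alpha_\varepsilon)}\le\|L_\varepsilon^{-1}\|_{\mathcal L(X_\varepsilon,X^\alpha_\varepsilon)}\,\|(I+\lambda L_\varepsilon^{-1})^{-1}\|.
\]
Next, $\|L_\varepsilon^{-1}\|_{\mathcal L(X_\varepsilon,X^{1/2}_\varepsilon)}\le1$ uniformly in $\varepsilon$, by testing the weak form with $u$ and using \eqref{coer}. The embedding $X^{1/2}_\varepsilon\hookrightarrow X^\alpha_\varepsilon$ has norm bounded by $(\lambda^\varepsilon_1)^{\alpha-1/2}\le1$, because $\lambda^\varepsilon_1\ge1$. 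Together these give the first estimate, and the $\mathcal L(X_\varepsilon)$ bound follows in the same way.

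The main obstacle is uniformity in $\lambda$. The constants $C_0(K_0)$ and $\sup_{K_0}|\lambda|$ must be finite and independent of $\varepsilon$; compactness of $K_0$ and continuity of the limit resolvent provide this. A secondary subtlety is that the error is controlled in $\mathcal L(X_\varepsilon,X^\alpha_\varepsilon)$ while the Neumann series must run in $\mathcal L(X_\varepsilon)$; the monotone embedding above resolves this.
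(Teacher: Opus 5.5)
Your argument is correct; the comparison is with a citation, since the paper gives no proof of its own and simply refers to \cite[Lemma 3.2]{esperanza1}.

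The citation imports the lemma from the abstract framework of \cite{esperanza1}. That buys brevity and generality, because the lemma then holds for any family fitting the framework. It leaves implicit the check that the present $E$, $M$, $L_\varepsilon$, $L_0$ satisfy that framework's standing hypotheses, together with the rate in Corollary~\ref{cor:convergencia}.

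You instead prove the lemma directly. You use the factorization $(\lambda I+L_\varepsilon)^{-1}=L_\varepsilon^{-1}(I+\lambda L_\varepsilon^{-1})^{-1}$ and a Neumann series around the explicit operator $S_\varepsilon(\lambda)=E(I+\lambda L_0^{-1})^{-1}M+(I-EM)$. This exposes exactly which structural facts are used: $ME=I$, $EM$ being an orthogonal projection, and $\lambda_1^\varepsilon\ge1$. It also yields an explicit threshold $\varepsilon_0(K_0)$, fixed by $\nu$, $\sup_{K_0}|\lambda|$ and $\sup_{K_0}\|(I+\lambda L_0^{-1})^{-1}\|$, and explicit constants.

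A few steps can be trimmed:
\begin{itemize}
\item Invertibility of $I+\lambda L_0^{-1}$ needs only $\lambda\in\rho(-L_0)$, since its inverse is $I-\lambda(\lambda I+L_0)^{-1}$. So Theorem~\ref{baraocp} plays no role.
\item $S_\varepsilon(\lambda)$ is in fact a two-sided inverse of $I+\lambda EL_0^{-1}M$; the left product also reduces to $I$, using $EME=E$. Hence
\[
I+\lambda L_\varepsilon^{-1}=\bigl(I+\lambda EL_0^{-1}M\bigr)\bigl(I+\lambda S_\varepsilon(\lambda)(L_\varepsilon^{-1}-EL_0^{-1}M)\bigr)
\]
is invertible outright once $|\lambda|\,\nu(\varepsilon)\,\|S_\varepsilon(\lambda)\|\le\frac12$. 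The Fredholm/self-adjointness step, and with it any reliance on compactness of $L_\varepsilon^{-1}$ on the cusp domain, becomes unnecessary.
\item $\|L_\varepsilon^{-1}\|_{\mathcal L(X_\varepsilon,X^\alpha_\varepsilon)}=\|L_\varepsilon^{\alpha-1}\|_{\mathcal L(X_\varepsilon)}\le1$ holds directly for every $\alpha\in[0,1]$. The detour through $X^{1/2}_\varepsilon$, which restricts you to $\alpha\le\frac12$, can therefore be dropped.
\end{itemize}
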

\demo
See \cite[Lemma 3.2]{esperanza1}.
\fimdemo

\

Now we want to estimate $\|(\lambda I + L_\varepsilon)^{-1}E - E(\lambda I + L_0)^{-1}\|_{\mathcal{L}(X_0, X_\varepsilon^\al)}$. 

\begin{lemma}\label{lemma:COnvResolvent}
Assuming that the estimate in the Corollary \ref{cor:convergencia} is satisfied, if $\lambda \in \rho(-L_0)$  and $\ep$ is small enough so that $\lambda \in \rho(-L_\varepsilon)$, we have
$$\|(\lambda I + L_\varepsilon)^{-1}E - E(\lambda I + L_0)^{-1}\|_{\mathcal{L}(X_0, X_\varepsilon^\al)} 
\leq C^\varepsilon(\lambda)\, \nu(\ep),$$
where $C^\varepsilon(\lambda) = \left( 1 + \frac{|\lambda|}{\mathrm{dist}(\lambda, \sigma(-L_\varepsilon))} \right)
\left( 1 + \frac{|\lambda|}{\mathrm{dist}(\lambda, \sigma(-L_0))} \right).$
\end{lemma}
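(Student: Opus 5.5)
The plan is to use the standard resolvent identity that expresses $(\lambda I + L)^{-1}$ through $L^{-1}$, and to reduce everything to the $\lambda=0$ estimate of Corollary \ref{cor:convergencia}. For $\lambda \in \rho(-L_\varepsilon)$ one has
\begin{equation*}
(\lambda I + L_\varepsilon)^{-1} = \bigl(I - \lambda(\lambda I + L_\varepsilon)^{-1}\bigr) L_\varepsilon^{-1}, \qquad (\lambda I + L_0)^{-1} = L_0^{-1}\bigl(I - \lambda(\lambda I + L_0)^{-1}\bigr).
\end{equation*}
The first form is placed on the left factor and the second on the right, so that the difference $L_\varepsilon^{-1}E - EL_0^{-1}$ appears in the middle. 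Since $ME = I$ on $X_0$, we have $L_\varepsilon^{-1}E - EL_0^{-1} = (L_\varepsilon^{-1} - EL_0^{-1}M)E$. Corollary \ref{cor:convergencia} bounds this by $\nu(\varepsilon)$ in $\mathcal{L}(X_\varepsilon, X^\alpha_\varepsilon)$, and hence in $\mathcal{L}(X_0, X_\varepsilon^\alpha)$, because $\|Eu\|_{X_\varepsilon} = |B_1|^{1/2}\|u\|_{X_0}$.

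Concretely, I will write
\begin{equation*}
(\lambda I + L_\varepsilon)^{-1}E - E(\lambda I + L_0)^{-1} = \bigl(I - \lambda(\lambda I + L_\varepsilon)^{-1}\bigr)\bigl(L_\varepsilon^{-1}E - EL_0^{-1}\bigr)\bigl(I - \lambda(\lambda I + L_0)^{-1}\bigr).
\end{equation*}
This identity holds provided $E$ commutes with the outer factors in the right way. To check it, expand the right-hand side. Each cross term $\lambda(\lambda+L_\varepsilon)^{-1}EL_0^{-1}$ combines with the others via $(\lambda+L_\varepsilon)^{-1}\lambda L_\varepsilon^{-1}= L_\varepsilon^{-1}-(\lambda+L_\varepsilon)^{-1}$ and the analogous relation for $L_0$. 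The terms with two resolvents, $\lambda^2(\lambda+L_\varepsilon)^{-1}(L_\varepsilon^{-1}E-EL_0^{-1})(\lambda+L_0)^{-1}$, telescope in the same way. I expect this algebraic verification to be the only delicate point. If the three-factor form fails to close exactly, I will fall back on the two-step identity
\begin{equation*}
(\lambda+L_\varepsilon)^{-1}E - E(\lambda+L_0)^{-1} = (\lambda+L_\varepsilon)^{-1}L_\varepsilon\bigl(L_\varepsilon^{-1}E - EL_0^{-1}\bigr)L_0(\lambda+L_0)^{-1}.
\end{equation*}
This is immediate from $(\lambda+L_\varepsilon)^{-1}L_\varepsilon E L_0^{-1} L_0(\lambda+L_0)^{-1}$ versus the other term, and it yields exactly the stated product form, since $L(\lambda+L)^{-1} = I - \lambda(\lambda+L)^{-1}$.

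The norms then split. The middle factor is bounded by $\nu(\varepsilon)$ from $X_0$ to $X_\varepsilon^\alpha$, after absorbing $|B_1|^{1/2}$ into $\nu$. The right factor $I-\lambda(\lambda+L_0)^{-1}$ acts on $X_0$. Since $L_0$ is self-adjoint, $\|(\lambda+L_0)^{-1}\|_{\mathcal{L}(X_0)} = 1/\mathrm{dist}(\lambda,\sigma(-L_0))$, so this factor has norm at most $1 + |\lambda|/\mathrm{dist}(\lambda,\sigma(-L_0))$. The left factor must be bounded on $X^\alpha_\varepsilon$. Because $L_\varepsilon$ is self-adjoint and $(\lambda+L_\varepsilon)^{-1}$ commutes with $L_\varepsilon^\alpha$, its $\mathcal{L}(X^\alpha_\varepsilon)$ norm equals its $\mathcal{L}(X_\varepsilon)$ norm. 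That norm is $1/\mathrm{dist}(\lambda,\sigma(-L_\varepsilon))$ by the spectral theorem, which gives the factor $1 + |\lambda|/\mathrm{dist}(\lambda,\sigma(-L_\varepsilon))$. Multiplying the three bounds gives $C^\varepsilon(\lambda)\nu(\varepsilon)$.
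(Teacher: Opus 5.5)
Your proposal is correct and is essentially the argument of the source the paper cites, \cite[Lemma 3.4]{esperanza1}: the factorization $(\lambda I+L_\varepsilon)^{-1}E-E(\lambda I+L_0)^{-1}=\bigl(I-\lambda(\lambda I+L_\varepsilon)^{-1}\bigr)\bigl(L_\varepsilon^{-1}E-EL_0^{-1}\bigr)\bigl(I-\lambda(\lambda I+L_0)^{-1}\bigr)$, with self-adjointness used to bound the outer factors on $X_\varepsilon^\alpha$ and $X_0$. The identity closes exactly, needs no commutation involving $E$, and is the same as your ``fallback'' identity; your step $L_\varepsilon^{-1}E-EL_0^{-1}=(L_\varepsilon^{-1}-EL_0^{-1}M)E$, using $ME=I$ and $\|E\|_{\mathcal L(X_0,X_\varepsilon)}=|B_1|^{1/2}$, correctly supplies the link to Corollary \ref{cor:convergencia} that the paper leaves implicit.
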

\demo
See \cite[Lemma 3.4]{esperanza1}.
\fimdemo

\begin{remark}\label{remark54}
In this setting, the parameter $\nu(\ep)$ appearing in the previous lemma admits the explicit form $\nu(\ep) = C_0\,\ep$, where $C_0 > 0$ is a constant independent of $\ep$.
\end{remark}

\begin{corollary}\label{cor:UnifConstant}
\emph{(i)} If $K_{0} \subset \rho (-L_{0})$ as in Lemma \ref{cpto} and $\Sigma_{-\kappa,\phi}$ is the set of the complex plane described by $\Sigma_{-\kappa,\phi} = \{\lambda \in \mathbb{C}, \,\, |\lambda + \kappa| \leq \pi - \phi \}$  with $\kappa \geq 0$, then $$\sup_{ \lambda \in K_{0} \cup \Sigma_{-\kappa, \phi} } C^{\ep} (\lambda) \leq C  $$ for some constant $C$ independent of $\lambda$. Note that, due to the spectral continuity, we can also consider $C^{\ep} (\lambda)$ independent of $\ep$.

\noindent \emph{(ii)} If we take $\kappa = 0$ and $\phi = \frac{\pi}{4}$ then $$C^{\ep}(\lambda) \leq \left( 1 + \tfrac{1}{\sin(\phi)} \right)^{2} \leq 6, \,\, \text{for all} \,\, \lambda \in \Sigma_{0, \frac{\pi}{4}}.$$
\end{corollary}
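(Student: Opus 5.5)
The plan is to bound the two factors of $C^\varepsilon(\lambda)$ separately, each by a quantity depending only on the geometry of the region. This reduces everything to a lower bound on $\mathrm{dist}(\lambda,\sigma(-L_\varepsilon))/|\lambda|$, uniform in $\varepsilon\in[0,\varepsilon_0]$. The one structural fact I will use is the one recalled just before Lemma~\ref{cpto}: each $L_\varepsilon$ is self-adjoint and positive with compact resolvent, and its first eigenvalue satisfies $\lambda_1^\varepsilon\ge c>0$ with $c$ independent of $\varepsilon$. This follows from the uniform coercivity \eqref{coer}, since $b_\varepsilon(u,u)+\|u\|^2\ge\|u\|^2$ gives $\lambda_1^\varepsilon\ge1$. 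Hence $\sigma(-L_\varepsilon)\subset(-\infty,-1]$ for every $\varepsilon\in[0,\varepsilon_0]$, and in particular for $\varepsilon=0$. The spectrum is therefore contained in the negative real half-line, uniformly in $\varepsilon$, and the bound needs no spectral continuity.

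For (ii), take $\lambda\in\Sigma_{0,\pi/4}$, which I read as the sector $|\arg\lambda|\le\pi-\phi=3\pi/4$ (possibly with $0$ removed). For any $\mu\le -1<0$, elementary geometry gives $|\lambda-\mu|\ge|\lambda|\sin\phi$. Indeed, the distance from $\lambda$ to the closed negative half-line is $|\lambda|$ when $|\arg\lambda|\le\pi/2$, and $|\lambda|\sin(\pi-|\arg\lambda|)\ge|\lambda|\sin\phi$ when $\pi/2<|\arg\lambda|\le\pi-\phi$. Consequently $|\lambda|/\mathrm{dist}(\lambda,\sigma(-L_\varepsilon))\le 1/\sin\phi$ for all $\varepsilon$, including $\varepsilon=0$. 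Multiplying the two factors gives $C^\varepsilon(\lambda)\le(1+1/\sin\phi)^2=(1+\sqrt2)^2\approx5.83\le6$.

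For (i), split $K_0\cup\Sigma_{-\kappa,\phi}$ into the two pieces. On $\Sigma_{-\kappa,\phi}$ (vertex at $-\kappa$, with $\kappa<1$ so that it stays in the resolvent set), the same sine argument applies relative to the vertex. Away from a neighbourhood of the vertex, the ratio $|\lambda|/|\lambda+\kappa|$ is bounded. Near the vertex, the distance to the spectrum is at least $1-\kappa>0$. Together these give a uniform bound. On the compact set $K_0\subset\rho(-L_0)$, $|\lambda|$ is bounded and $\mathrm{dist}(\lambda,\sigma(-L_0))\ge d_0>0$. The factor involving $\sigma(-L_\varepsilon)$ is handled by Lemma~\ref{cpto}: for $\varepsilon\le\varepsilon_0(K_0)$ the resolvent norm $\|(\lambda+L_\varepsilon)^{-1}\|_{\mathcal L(X_\varepsilon)}\le C(K_0)$. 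By self-adjointness this norm equals $1/\mathrm{dist}(\lambda,\sigma(-L_\varepsilon))$, so the distance is bounded below by $1/C(K_0)$. Taking the supremum of the product over the union gives the constant $C$, independent of $\lambda$ and $\varepsilon$.

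The main obstacle is justifying the $\varepsilon$-independence on $K_0$ rather than merely for fixed $\varepsilon$. The self-adjoint identity combined with Lemma~\ref{cpto} is the step I would rely on, and it avoids any appeal to spectral convergence beyond what that lemma already provides. The sector part is purely geometric once $\sigma(-L_\varepsilon)\subset(-\infty,-1]$ is noted.
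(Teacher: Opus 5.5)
Your argument is correct and is essentially the one behind the result the paper only cites (\cite[Corollary 3.5]{esperanza1}): the sector bound $\mathrm{dist}(\lambda,\sigma(-L_\varepsilon))\ge|\lambda|\sin\phi$ comes from $\sigma(-L_\varepsilon)$ lying on the negative real axis, and on $K_0$ you use Lemma~\ref{cpto} together with the self-adjoint identity $\|(\lambda I+L_\varepsilon)^{-1}\|_{\mathcal{L}(X_\varepsilon)}=1/\mathrm{dist}(\lambda,\sigma(-L_\varepsilon))$, which makes the paper's appeal to ``spectral continuity'' precise. Your readings are the right ones, since $\Sigma_{-\kappa,\phi}$ must be the sector $|\arg(\lambda+\kappa)|\le\pi-\phi$ and the restriction $\kappa<1$ is genuinely necessary (constants are eigenfunctions, so $-1\in\sigma(-L_\varepsilon)$ for every $\varepsilon\in[0,\varepsilon_0]$); the only slip is that near the vertex the distance to the spectrum is bounded below by $(1-\kappa)\sin\phi$, not by $1-\kappa$, and this does not affect the conclusion.
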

\demo
See \cite[Corollary 3.5]{esperanza1}.
\fimdemo

\

Building upon the resolvent estimates established in the previous results, we now derive estimates for the corresponding linear semigroups. These bounds are essential for the subsequent analysis of the nonlinear system.

\begin{lemma}\label{lemma:ConvSemigruposLin}
Let $\theta \in (0, 1)$. Then, there exists a constant $C>0$ independent of $\ep$, such that 
\begin{equation*}
 \left\| e^{-L_\varepsilon t} E - E e^{-L_0 t} \right\|_{\mathcal{L}(X_0, X^\alpha_\varepsilon)} 
\leq C e^{-\delta(1-\theta)t}t^{-\al(1-\theta)-\theta}\ep^\theta.    
\end{equation*}
\end{lemma}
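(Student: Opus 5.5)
The plan is to get two separate bounds and interpolate between them. One bound is uniform in $\varepsilon$ and decays in $t$; the other carries the factor $\varepsilon$ but is singular in $t$. The stated estimate is their geometric mean with weights $1-\theta$ and $\theta$. Both bounds will come from the Dunford integral representation of $e^{-L_\varepsilon t}$ and $e^{-L_0 t}$, together with Lemma \ref{lemma:COnvResolvent}, Remark \ref{remark54} and Corollary \ref{cor:UnifConstant}.

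\textbf{Step 1: the uniform bound.} By the triangle inequality,
\[
\|e^{-L_\varepsilon t}E - Ee^{-L_0 t}\|_{\mathcal L(X_0,X^\alpha_\varepsilon)} \le \|e^{-L_\varepsilon t}E\| + \|Ee^{-L_0t}\|.
\]
For the first term I use \eqref{des:semHenry} together with the boundedness of $E$ from $X_0$ into $X_\varepsilon$; since $\|Eu\|_{L^2(\Omega)}^2=|B_1|\|u\|^2_{L^2_a}$, the operator $E$ is an isometry up to a constant. For the second term I use the analogous estimate for $L_0$ in $X_0^\alpha$, and then the fact that $E$ maps $X_0^{1/2}=H^1_a$ into $X_\varepsilon^{1/2}$ with norm independent of $\varepsilon$, because $\nabla_y Eu=0$. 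The case $\alpha<1/2$ follows by interpolation. The outcome is $\le C e^{-\delta t}t^{-\alpha}$.

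\textbf{Step 2: the $\varepsilon$-bound.} Write
\[
e^{-L_\varepsilon t}E-Ee^{-L_0t}=\frac1{2\pi i}\int_\Gamma \big[(\lambda+L_\varepsilon)^{-1}E-E(\lambda+L_0)^{-1}\big]e^{\lambda t}\,d\lambda .
\]
Take $\Gamma$ to be the boundary of a sector shifted to the left, $\{\lambda:|\arg(\lambda+\delta)|=\pi-\phi\}$. By the uniform lower bound $\lambda_1^\varepsilon\ge c>\delta$, this contour lies in $\rho(-L_\varepsilon)\cap\rho(-L_0)$. Corollary \ref{cor:UnifConstant} bounds $C^\varepsilon(\lambda)$ on $\Gamma$ uniformly in $\varepsilon$ and $\lambda$, after a shift by $\delta$, and Remark \ref{remark54} gives $\nu(\varepsilon)=C_0\varepsilon$. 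Hence the integrand is $\le C\varepsilon |e^{\lambda t}|$. Parametrize by $\lambda=-\delta+re^{\pm i(\pi-\phi)}$, so that
\[
\int_\Gamma |e^{\lambda t}|\,|d\lambda|\le 2e^{-\delta t}\int_0^\infty e^{-rt\cos\phi}\,dr = Ce^{-\delta t}t^{-1}.
\]
This yields the bound $C\varepsilon e^{-\delta t}t^{-1}$.

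\textbf{Step 3: interpolation.} Write the norm $N$ as $N=N^{1-\theta}N^\theta$ and apply Step 1 to the first factor and Step 2 to the second:
\[
N\le (Ce^{-\delta t}t^{-\alpha})^{1-\theta}(C\varepsilon e^{-\delta t}t^{-1})^\theta.
\]
The right-hand side is $\le Ce^{-\delta(1-\theta)t}t^{-\alpha(1-\theta)-\theta}\varepsilon^\theta$, using $e^{-\delta\theta t}\le1$. This is the claimed estimate.

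\textbf{Main obstacle.} The hard part is Step 2: making sure the resolvent-difference constant stays uniform along the whole unbounded contour. Lemma \ref{lemma:COnvResolvent} controls the difference by $C^\varepsilon(\lambda)\nu(\varepsilon)$, and we need $\frac{|\lambda|}{\mathrm{dist}(\lambda,\sigma(-L_\varepsilon))}$ bounded on $\Gamma$ uniformly in $\varepsilon$. Since the spectra lie in $[c,\infty)$ with $c>\delta$ independent of $\varepsilon$, the spectra of $-L_\varepsilon$ lie in $(-\infty,-c]$. On the shifted sector the distance is then comparable to $|\lambda|$ for large $|\lambda|$, and is bounded below near the vertex. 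A second point is that $\mathrm{dist}(\lambda,\sigma(-L_\varepsilon))\ge (c-\delta)\sin\phi$ near the vertex, and this lower bound must be uniform in $\varepsilon$. If the shift by $\delta$ causes trouble, the fallback is to use $\delta$ replaced by a smaller positive constant; this only changes the constants.
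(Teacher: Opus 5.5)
Your proposal is correct and follows essentially the paper's proof: a uniform bound $Ce^{-\delta t}t^{-\alpha}$ from the sectorial estimate \eqref{des:semHenry}, an $\mathcal{O}(\varepsilon t^{-1})$ bound from the Dunford integral combined with Lemma \ref{lemma:COnvResolvent}, Remark \ref{remark54} and Corollary \ref{cor:UnifConstant}, and then the elementary inequality $N\le A^{1-\theta}B^{\theta}$. The only difference is that you shift the contour's vertex to $-\delta$, which gives an extra factor $e^{-\delta t}$ in the $\varepsilon$-bound that you then discard; the paper instead integrates over the boundary of $\Sigma_{0,\pi/4}$ through the origin and gets $C\varepsilon t^{-1}$ directly.
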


\demo
Let $\Sigma_{0,\phi} = \left\{ \lambda \in \mathbb{C} : |\arg(\lambda)| \leq \pi - \phi \right\}, \ \text{with} \ \phi = \frac{\pi}{4}$, and let $\Gamma$ be the boundary of $\Sigma_{0,\frac{\pi}{4}}$, that is, the curve consisting of the following segments $\Gamma^1$ and $\Gamma^2$,$$\Gamma = \Gamma^1 \cup \Gamma^2 = \left\{ re^{-i(\pi - \phi)} : 0 \leq r < +\infty \right\} \cup \left\{ re^{i(\pi - \phi)} : 0 \leq r < +\infty \right\},$$ oriented such that the imaginary part grows as $\lambda$ runs in $\Gamma$. 

We know that
$$e^{-L_\varepsilon t}E - E e^{-L_0 t} = \frac{1}{2\pi i} \int_\Gamma \left( (\lambda I + L_\varepsilon)^{-1}E - E(\lambda I + L_0)^{-1} \right) e^{\lambda t} d\lambda.$$
Thus, using Lemma \ref{lemma:COnvResolvent}, we obtain
\begin{equation}\label{des:semigLin}
\left\| e^{-L_\varepsilon t}E - E e^{-L_0 t} \right\|_{\mathcal{L}(X_0, X^\alpha_\varepsilon)} 
\leq \frac{1}{2\pi} \left|\int_\Gamma C_1 \,\ep\, |e^{\lambda t}|\, |d\lambda| \right|\leq\left|\int_\Gamma C_1 \,\ep\, e^{\mathrm{Re}\lambda t}\, |d\lambda| \right|
\end{equation}
with $C_1 = C_0\sup\limits_{\lambda \in \Gamma} C^\varepsilon(\lambda)$ (as in Corollary \ref{cor:UnifConstant} and Remark \ref{remark54}). Making the change of variables $\mu = \lambda t$, we have 
\begin{eqnarray*}
\int_{\Gamma}e^{\mathrm{Re}\lambda t}\;\mathrm{d}\left|\lambda\right| = \int_{\Gamma}e^{\mathrm{Re}\mu}\;\mathrm{d}\frac{\left|\mu\right|}{t} \leq \frac{1}{t}\int_{\Gamma}e^{\left|\mu\right|}\;\mathrm{d}\left|\mu\right| = C_2\; t^{-1}. 
\end{eqnarray*}
Thus, from \eqref{des:semigLin}, we deduce that
\begin{equation}\label{des:semigLin2}
\left\| e^{-L_\varepsilon t}E - E e^{-L_0 t} \right\|_{\mathcal{L}(X_0, X^\alpha_\varepsilon)} 
\leq \frac{1}{2\pi}C_1C_2\,\ep\,t^{-1}.
\end{equation}
On the other hand, from \eqref{des:semHenry}, we have
\begin{align}\label{eqsemiglin2}
\big\|e^{-L_\varepsilon t}E - Ee^{-L_0t}\big\|_{\mathcal{L}(X_0, X^{\al}_{\varepsilon})} \leq \big\|e^{-L_\varepsilon t}E\big\|_{\mathcal{L}(X_0, X^{\al}_{\varepsilon})}  + \big\|Ee^{-L_0t}\big\|_{\mathcal{L}(X_0, X^{\al}_{\varepsilon})} \ \leq \ C_3\,e^{-\delta t}t^{-\al}.
\end{align}
For $\theta \in (0,1)$, interpolating between estimates \eqref{des:semigLin2} and \eqref{eqsemiglin2} with weights $\theta$ and $1 - \theta$, respectively (see \cite[p. 59, 103]{triebel}), we conclude that

\begin{equation*}
 \left\| e^{-L_\varepsilon t} E - E e^{-L_0 t} \right\|_{\mathcal{L}(X_0, X^\alpha_\varepsilon)} 
\leq C e^{-\delta(1-\theta)t}t^{-\al(1-\theta)-\theta}\ep^\theta    
\end{equation*}
where $C=(\frac{1}{2\pi}C_1C_2)^{\theta}(C_3)^{(1-\theta)}.$
\fimdemo

\subsection{Continuity of nonlinear semigroups}

In this section, we extend the analysis to the nonlinear setting and demonstrate the upper semicontinuity of the global attractors. 

We will show the following result
\begin{theorem}\label{contsemig}
Let $T_{\varepsilon}(t)$ denote the nonlinear semigroup generated by the problem \eqref{eq1}. Then, for any $R > 0$ sufficiently large and for all $t \in (0, \tau)$, where $\tau > 0$ is fixed, there exist constants $\varepsilon_0<1$, $C' > 0$ and $\varrho > 0$, independent of $\ep$, such that for any $u_0^0 \in X_0$ satisfying $\|u_0^0\|_{X_0} < R$, we have
\[
\|T_{\varepsilon}(t) E u_0^0 - E T_0(t) u_0^0\|_{X_{\varepsilon}^{\alpha}} 
\leq C' e^{\varrho t} t^{-\alpha(1 - \theta) - \theta}\varepsilon^{\theta}, \; \textrm{ for all } \varepsilon \in [0, \varepsilon_0].
\]
\end{theorem}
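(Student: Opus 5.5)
We compare the two mild solutions through the variation of constants formula and close the estimate with a singular Gronwall inequality. Write $u^\varepsilon(t)=T_\varepsilon(t)Eu_0^0$ and $u^0(t)=T_0(t)u_0^0$. Then
\[
u^\varepsilon(t)-Eu^0(t)=\bigl(e^{-L_\varepsilon t}E-Ee^{-L_0t}\bigr)u_0^0+\int_0^t \Bigl(e^{-L_\varepsilon(t-s)}F_\varepsilon(u^\varepsilon(s))-Ee^{-L_0(t-s)}F_0(u^0(s))\Bigr)\,ds .
\]
The first term is handled directly by Lemma \ref{lemma:ConvSemigruposLin}. It gives the bound $C e^{-\delta(1-\theta)t}t^{-\alpha(1-\theta)-\theta}\varepsilon^\theta R$, which already has the desired form.

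For the integral we add and subtract $e^{-L_\varepsilon(t-s)}EF_0(u^0(s))$ and use $EF_0(u^0)=F_\varepsilon(Eu^0)$, which holds because $f$ acts pointwise and $Eu^0$ is constant in $y$. This splits the integral into two pieces:
\[
I_1(t)=\int_0^t e^{-L_\varepsilon(t-s)}\bigl(F_\varepsilon(u^\varepsilon(s))-F_\varepsilon(Eu^0(s))\bigr)\,ds,
\qquad
I_2(t)=\int_0^t\bigl(e^{-L_\varepsilon(t-s)}E-Ee^{-L_0(t-s)}\bigr)F_0(u^0(s))\,ds .
\]

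\emph{Estimate of $I_2$.} We use Lemma \ref{lemma:ConvSemigruposLin} together with a uniform bound $\|F_0(u^0(s))\|_{X_0}\le C_F$ for $s\in[0,\tau]$. This bound follows from Theorem \ref{teosolgloblim1-eps} and its limit analogue, plus Lemma \ref{lema:nemytskii}, applied on bounded sets of radius depending on $R$. Since $\alpha(1-\theta)+\theta<1$, the kernel $(t-s)^{-\alpha(1-\theta)-\theta}$ is integrable, so
\[
\|I_2(t)\|_{X_\varepsilon^\alpha}\le C\varepsilon^\theta t^{1-\alpha(1-\theta)-\theta}\le C_\tau\varepsilon^\theta .
\]

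\emph{Estimate of $I_1$.} We use \eqref{des:semHenry} and the uniform local Lipschitz property of $F_\varepsilon$ from Lemma \ref{lema:nemytskii}, in its $X^\alpha_\varepsilon$ version from the remark following that lemma. The Lipschitz constant is uniform in $\varepsilon$ on the ball containing both trajectories over $[0,\tau]$. This gives
\[
\|I_1(t)\|_{X^\alpha_\varepsilon}\le M L_F\int_0^t (t-s)^{-\alpha}e^{-\delta(t-s)}\,\|u^\varepsilon(s)-Eu^0(s)\|_{X_\varepsilon^\alpha}\,ds .
\]

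\emph{Closing the estimate.} Set $\phi(t)=\|u^\varepsilon(t)-Eu^0(t)\|_{X^\alpha_\varepsilon}$. Collecting the three bounds yields
\[
\phi(t)\le C\varepsilon^\theta t^{-\alpha(1-\theta)-\theta}+C\varepsilon^\theta+ML_F\int_0^t (t-s)^{-\alpha}\phi(s)\,ds .
\]
We then apply Henry's singular Gronwall lemma for kernels $t^{-\beta}$ and $(t-s)^{-\alpha}$ with $\beta=\alpha(1-\theta)+\theta<1$. The solution is bounded by $C'e^{\varrho t}t^{-\beta}\varepsilon^\theta$, where $\varrho$ depends on $ML_F$ and $\alpha$. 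On the bounded interval $(0,\tau)$ the constant term $C\varepsilon^\theta$ is absorbed into $t^{-\beta}$, since $t^{-\beta}\ge\tau^{-\beta}$ there. This gives the claimed estimate.

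\emph{Main obstacle.} The delicate step is the uniform a priori bound that justifies using a single Lipschitz constant $L_F$ independent of $\varepsilon$ and $t$. We need both trajectories to stay in a fixed ball of $X^\alpha_\varepsilon$ for $t\in(0,\tau)$, with radius depending only on $R$. The initial datum is only in $X_0$, so the $X^\alpha$ norm may blow up like $t^{-\alpha}$ near $t=0$. The first approach to try is to use the $L^\infty$/dissipativity bounds of Section \ref{sec:3} and the growth of $f$ to bound $F_\varepsilon(u^\varepsilon(s))$ uniformly in $X_\varepsilon$. One could also work in the weighted quantity $t^{\alpha}\|\cdot\|_{X^\alpha_\varepsilon}$ with a local Lipschitz estimate adapted to it, so that only the singularity $t^{-\alpha}$ enters the Gronwall argument. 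If that fails, we would restrict to initial data on an absorbing set bounded in $X^\alpha$, which suffices for the later attractor application.
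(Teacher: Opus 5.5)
Your proposal follows the paper's proof essentially step by step. Both use the same variation-of-constants comparison, cancel the middle term through $F_\varepsilon(Ev)=EF_0(v)$, bound the linear-difference terms with Lemma~\ref{lemma:ConvSemigruposLin} and a Gamma-function integral, and close with Gronwall; your use of Henry's singular Gronwall lemma, with $\varrho$ depending on $ML_F$ and $\alpha$, is in fact a more accurate version of the paper's last step.

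The a priori difficulty you flag is real, and the paper does not address it either: it simply uses the $\varepsilon$-uniform constants $L_F$ and $C_F$ of Lemma~\ref{lema:nemytskii}. Your justification of $C_F$ via Theorem~\ref{teosolgloblim1-eps} has the same weakness, since that result controls only large times in terms of $H^1_\varepsilon$ data. The standard fix, consistent with how the theorem is later combined with \eqref{eq:liminf}, is to modify $f$ outside the uniform $L^\infty$ bound of the attractors so that $f$ and $f'$ are bounded. This leaves the attractors unchanged and makes $F_\varepsilon\colon X_\varepsilon\to X_\varepsilon$ bounded and globally Lipschitz, uniformly in $\varepsilon$. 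You then no longer need to keep the trajectories in a fixed $X^\alpha_\varepsilon$-ball near $t=0$.
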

\demo
Let $\tau >0$ and $t \in (0, \tau)$. Since
\begin{equation*}
T_{\varepsilon}(t, Eu_{0}^0) := T_\varepsilon(t)Eu_{0}^0 = e^{-t L_\varepsilon} E u_{0}^0 + \int_0^t e^{-(t-s) L_\varepsilon} F_\varepsilon\big(T_{\varepsilon}(s, Eu_0^0)\big)\;\mathrm{d}s, \quad \ep \in [0,\ep_0],
\end{equation*}
we may apply the Lemma \ref{lemma:ConvSemigruposLin} to estimate the linear part. This yields
\begin{eqnarray*}
||T_{\ep}(t, Eu_{0}^{0}) - ET_{0} (t, u_{0}^{0})||_{X^{\alpha}_{\ep}}  & \leq & ||(e^{-tL_{\ep}}E - Ee^{-tL_{0}}) u^{0}_0||_{X^{\alpha}_{\ep}} \\
& + & \int_{0}^{t} ||e^{-(t - s) L_{\ep}} F_{\ep}(T_{\ep}(s, Eu_{0}^{0})) - Ee^{-(t - s) L_{0}} F_{0}(T_{0}(s, u_{0}^{0}))||_{X^{\alpha}_{\ep}}ds \\
& \leq & C e^{-\delta(1 - \theta)t}t^{-\alpha(1 - \theta)} \ep^{\theta}t^{-\theta} ||u_{0}^{0}||_{X_{0}} \\
& + & \int_{0}^{t} ||(e^{-(t-s)L_{\ep}}E - Ee^{-(t-s)L_{0}}) F_{0}(T_{0}(s,u_{0}^{0}))||_{X^{\alpha}_{\ep}}ds \\
& + & \int_{0}^{t} ||e^{-(t - s)L_{\ep}}(F_{\ep}(E(T_{0}(s, u_{0}^{0})) - EF_{0}(T_{0}(s, u_{0}^{0})))||_{X^{\alpha}_{\ep}} ds \\
& + & \int_{0}^{t} ||e^{-(t-s)L_{\ep}} (F_{\ep}(T_{\ep}(s, Eu^{0}_{0})) - F_{\ep}(E(T_{0}(s, u_{0}^{0}))))||_{X^{\alpha}_{\ep}} ds
\end{eqnarray*}
In what follows, we are going to analyze these three last integrals. Since $F_\ep$ and $F_0$ are the Nemitskii's operators generated by the same nonlinearity $f:\mathbb{R}\to\mathbb{R}$, they are compatible with the extension $E$, that is, $F_\varepsilon(Ev)=E\,F_0(v),\ \text{for all }v.$ In particular, $F_{\ep}(E(T_{0}(s, u_{0}^{0}))=EF_{0}(T_{0}(s, u_{0}^{0})),$ hence the second integral is identically zero. For the first integral, using Lemma \ref{lema:nemytskii} and Lemma \ref{lemma:ConvSemigruposLin},
\begin{eqnarray*}
  & & \int_{0}^{t} ||(e^{-(t-s)L_{\ep}}E - Ee^{-(t-s)L_{0}}) F_{0}(T_{0}(s,u_{0}^{0}))||_{X^{\alpha}_{\ep}}ds \\
  & \leq & \int_{0}^{t} C e^{-\delta(1 - \theta)(t - s)}(t - s)^{-\alpha(1 - \theta)} \ep^{\theta}(t -s)^{-\theta} ||F_{0}(T_{0}(s,u_{0}^{0}))||_{X_{0}} ds \\
    & \leq & C_{1} \ep^{\theta} \int_{0}^{t}  e^{-\delta(1 - \theta)(t - s)}(t - s)^{-\alpha(1 - \theta) - \theta} ds \\
    & \leq & C_{1} \ep^{\theta} \int_{0}^{\delta t(1 - \theta)} e^{-y} \left(\tfrac{y}{\delta(1 - \theta)}\right)^{-\alpha(1 - \theta) - \theta} \tfrac{1}{(\delta(1 - \theta))} dy \\
    & \leq & C_{1} \ep^{\theta} \left(\tfrac{1}{\delta(1 - \theta)}\right)^{-\al(1 - \theta) -\theta + 1} \int_{0}^{\infty} e^{-y} y^{-\alpha(1 - \theta) - \theta} dy \ \leq \ C_{2} \ep^{\theta}
\end{eqnarray*}

\noindent where $C_{1}=CC_F$ and $C_{2}=C_1\small{\Big(\big(\tfrac{1}{\delta(1 - \theta)}\big)^{-\al(1 - \theta) -\theta + 1} \Gamma (-\alpha(1 - \theta) - \theta + 1)\Big)}$. And, with respect to the third integral, using (\ref{des:semHenry}) and Lemma \ref{lema:nemytskii},
\begin{eqnarray*}
   & & \int_{0}^{t} ||e^{-(t-s)L_{\ep}} (F_{\ep}(T_{\ep}(s, Eu^{0}_{0})) - F_{\ep}(E(T_{0}(s, u_{0}^{0}))))||_{X^{\alpha}_{\ep}} ds \\
   & & \quad \leq  \int_{0}^{t} M_{\delta} e^{-\delta(t - s)} (t-s)^{-\alpha} L_{F} ||T_{\ep}(s, Eu_{0}^{0}) - ET_{0}(s, u^{0}_{0})||_{X^{\alpha}_{\ep}} ds.
\end{eqnarray*}

Therefore, 
\begin{eqnarray*}
   ||T_{\ep}(t, Eu_{0}^{0}) - ET_{0} (t, u_{0}^{0})||_{X^{\alpha}_{\ep}} & \leq & C e^{-\delta(1 - \theta)t}t^{-\alpha(1 - \theta)} \ep^{\theta}t^{-\theta} R + C_{2} \ep^{\theta} t^{-\alpha(1 - \theta) - \theta} t^{\alpha(1 - \theta) + \theta} e ^{-\delta t} e^{\delta t} \\
   & + & \int_{0}^{t} M_{\delta} e^{-\delta(t - s)} (t-s)^{-\alpha} L_{F} ||T_{\ep}(s, Eu_{0}^{0}) - ET_{0}(s, u^{0}_{0})||_{X^{\alpha}_{\ep}} ds \\
   & \leq & \ep^{\theta} t^{-\alpha (1 - \theta) - \theta} e^{-\delta t} ( C_{3}e^{\delta \theta \tau} + C_{2} \tau^{\alpha(1 - \theta) + \theta} e^{\delta \tau}) \\
   & + & M_{\delta}L_{F}\int_{0}^{t}  e^{-\delta(t - s)} (t-s)^{-\alpha}  ||T_{\ep}(s, Eu_{0}^{0}) - ET_{0}(s, u^{0}_{0})||_{X^{\alpha}_{\ep}} ds \\
   & \leq & C_{4}\ep^{\theta} t^{-\alpha (1 - \theta) - \theta} e^{-\delta t} \\
   & + & M_{\delta}L_{F}\int_{0}^{t}  e^{-\delta(t - s)} (t-s)^{-\alpha}  ||T_{\ep}(s, Eu_{0}^{0}) - ET_{0}(s, u^{0}_{0})||_{X^{\alpha}_{\ep}} ds,
\end{eqnarray*}

\noindent where $C_{3} = CR$  and $C_{4} =( C_{3}e^{\delta \theta \tau} + C_{2} \tau^{\alpha(1 - \theta) + \theta} e^{\delta \tau})$. Consequently,
\begin{eqnarray*}
    e^{\delta t} ||T_{\ep}(t, Eu_{0}^{0}) - ET_{0} (t, u_{0}^{0})||_{X^{\alpha}_{\ep}} & \leq &  C_{4}\ep^{\theta} t^{-\alpha (1 - \theta) - \theta}  \\
    & + &  M_{\delta}L_{F}\int_{0}^{t}  e^{\delta s} (t-s)^{-\alpha}  ||T_{\ep}(s, Eu_{0}^{0}) - ET_{0}(s, u^{0}_{0})||_{X^{\alpha}_{\ep}} ds.
\end{eqnarray*}
By Gronwall's Inequality, there exists $C'> 0$ such that,
\begin{equation*}
    e^{\delta t} ||T_{\ep}(t, Eu_{0}^{0}) - ET_{0} (t, u_{0}^{0})||_{X^{\alpha}_{\ep}} \leq  C'\ep^{\theta} t^{-\alpha (1 - \theta) - \theta}.
\end{equation*}
Suppose $\beta > 1$ such that, $ e^{-\beta \delta t} e^{\delta t} ||T_{\ep}(t, Eu_{0}^{0}) - ET_{0} (t, u_{0}^{0})||_{X^{\alpha}_{\ep}} \leq  C'\ep^{\theta} t^{-\alpha (1 - \theta) - \theta}.$ Then, we conclude that
\begin{equation*}
  ||T_{\ep}(t, Eu_{0}^{0}) - ET_{0} (t, u_{0}^{0})||_{X^{\alpha}_{\ep}} \leq  C'e^{\varrho t} t^{-\alpha (1 - \theta) - \theta}\ep^{\theta},
\end{equation*} where $\varrho=\delta(\beta - 1)$.
\fimdemo
\begin{theorem}
Assume conditions \eqref{H1}--\eqref{H3}.
Then, the family of attractors $\left\lbrace\mathcal{A}_{\varepsilon}\right\rbrace_{\varepsilon \in [0, \varepsilon_0]}$ is upper semicontinuous at $\varepsilon = 0$ in $X_{\ep}^{\al}$. 
\end{theorem}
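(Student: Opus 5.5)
To prove upper semicontinuity of $\{\mathcal A_\varepsilon\}$ at $\varepsilon=0$ in $X^\alpha_\varepsilon$, I will show
\[
\sup_{u^\varepsilon\in\mathcal A_\varepsilon}\ \inf_{v\in\mathcal A_0}\|u^\varepsilon-Ev\|_{X^\alpha_\varepsilon}\to 0 .
\]
The argument combines invariance of $\mathcal A_\varepsilon$, the uniform bound from Theorem~\ref{glob-eps}, attraction by $\mathcal A_0$, and the finite-time comparison of Theorem~\ref{contsemig}.

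\textbf{Step 1: reduce to a limit datum.} Let $u^\varepsilon\in\mathcal A_\varepsilon$. By invariance, for any fixed $T>0$ there is $w^\varepsilon\in\mathcal A_\varepsilon$ with $u^\varepsilon=T_\varepsilon(T)w^\varepsilon$. Since $\mathcal A_\varepsilon$ is bounded in $H^1_\varepsilon(\Omega)$ uniformly in $\varepsilon$, Lemma~\ref{teo21} and Lemma~\ref{lemma22} give two facts. First, $\|w^\varepsilon-EMw^\varepsilon\|_{L^2(\Omega)}\le C\|\nabla_y w^\varepsilon\|_{L^2(\Omega)}\le C\varepsilon$. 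Second, $Mw^\varepsilon$ is bounded in $H^1_a(0,1)$, so $\|Mw^\varepsilon\|_{X_0}<R$ for some $R$ independent of $\varepsilon$.

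\textbf{Step 2: split the error.} I write
\[
u^\varepsilon-ET_0(T)Mw^\varepsilon=\big(T_\varepsilon(T)w^\varepsilon-T_\varepsilon(T)EMw^\varepsilon\big)+\big(T_\varepsilon(T)EMw^\varepsilon-ET_0(T)Mw^\varepsilon\big).
\]
The second bracket is at most $C'e^{\varrho T}T^{-\alpha(1-\theta)-\theta}\varepsilon^\theta$ by Theorem~\ref{contsemig}.

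For the first bracket I use the variation of constants formula. It gives
\[
\|T_\varepsilon(t)w^\varepsilon-T_\varepsilon(t)EMw^\varepsilon\|_{X^\alpha_\varepsilon}\le M_\delta t^{-\alpha}\|w^\varepsilon-EMw^\varepsilon\|_{X_\varepsilon}+M_\delta L_F\int_0^t(t-s)^{-\alpha}\|\cdot\|_{X^\alpha_\varepsilon}\,ds ,
\]
using \eqref{des:semHenry} and Lemma~\ref{lema:nemytskii}. Both trajectories stay in a fixed ball, by Theorem~\ref{teosolgloblim1-eps} and the uniform constants there. A singular Gronwall inequality then yields the bound $C(T)T^{-\alpha}\varepsilon$. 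Hence
\[
\|u^\varepsilon-ET_0(T)Mw^\varepsilon\|_{X^\alpha_\varepsilon}\le C(T)\varepsilon^\theta .
\]

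\textbf{Step 3: use attraction of $\mathcal A_0$.} The set $\{Mw^\varepsilon\}$ is bounded in $X^{1/2}_0$, hence bounded in $X^\alpha_0$. Since $\mathcal A_0$ attracts bounded sets, given $\eta>0$ I choose $T$ so that $\mathrm{dist}_{X^\alpha_0}(T_0(T)Mw^\varepsilon,\mathcal A_0)<\eta$ for all $\varepsilon$. Then I pick $v\in\mathcal A_0$ with $\|T_0(T)Mw^\varepsilon-v\|_{X^\alpha_0}<\eta$.

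I also need $\|Ez\|_{X^\alpha_\varepsilon}\le C\|z\|_{X^\alpha_0}$, with $C$ independent of $\varepsilon$. The trivial extension has no $y$-gradient, so $\|Ez\|_{H^1_\varepsilon}^2$ equals $|B_1|$ times the weighted $H^1_a$ norm plus $O(\varepsilon)$ corrections from $A_{k1}$. For intermediate $\alpha$ the bound then follows by interpolation. This gives $\mathrm{dist}(u^\varepsilon,E\mathcal A_0)\le C\eta+C(T)\varepsilon^\theta$. Letting $\varepsilon\to0$ and then $\eta\to0$ proves the claim.

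\textbf{Main obstacle.} The delicate point is the uniform-in-$\varepsilon$ control in Step 2. Theorem~\ref{contsemig} is stated for initial data of the form $Eu_0^0$, whereas points of $\mathcal A_\varepsilon$ are not of that form. I must therefore carefully justify the $O(\varepsilon)$ gap between $w^\varepsilon$ and $EMw^\varepsilon$ in $X_\varepsilon$, and make sure the Lipschitz constant $L_F$ applies on a common ball. If the Gronwall route is awkward, the alternative is a compactness argument: use the characterization via $E$-convergent subsequences from the Remark, together with Theorem~\ref{thm:convergencia}-type arguments and the equality $\mathcal A_0=\bigcup$ of bounded complete trajectories. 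That argument passes to a limit of complete bounded trajectories in $\mathcal A_\varepsilon$.
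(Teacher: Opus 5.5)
Your proof is correct. It is also considerably more explicit than the paper's, which is a single sentence: upper semicontinuity ``follows from Theorem~\ref{contsemig} and the uniform $L^\infty(\Omega)$ bounds \eqref{eq:liminf}'', with the abstract argument deferred to \cite[Proposition 5.1]{german1}.

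You carry out the classical invariance/attraction/finite-time-comparison argument yourself. In doing so you confront a point the paper's one-liner leaves implicit: Theorem~\ref{contsemig} only controls initial data of the form $Eu_0^0$, while points of $\mathcal{A}_\varepsilon$ are not of that form. Your bridge is sound. It consists of:
\begin{itemize}
\item[(a)] the transversal Poincar\'e inequality of Lemma~\ref{lemma22} combined with the uniform $H^1_\varepsilon$ bound, giving $\|w^\varepsilon-EMw^\varepsilon\|_{X_\varepsilon}\le C\varepsilon$;
\item[(b)] a singular-Gronwall Lipschitz estimate for $T_\varepsilon(T)$ from $X_\varepsilon$ into $X^\alpha_\varepsilon$;
\item[(c)] uniform boundedness of $E:X^\alpha_0\to X^\alpha_\varepsilon$;
\item[(d)] attraction by $\mathcal{A}_0$ of the single bounded set $\bigcup_{\varepsilon}M\mathcal{A}_\varepsilon$.
\end{itemize}
In (d), $T=T_\eta$ should be chosen from this whole set before the preimages $w^\varepsilon$ are picked. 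Your write-up fixes $T$ first and chooses it later, but since the bound on $Mw^\varepsilon$ is uniform over $\mathcal{A}_\varepsilon$, nothing breaks. Together these give $\sup_{u\in\mathcal{A}_\varepsilon}\mathrm{dist}(u,E\mathcal{A}_0)\le C\eta+C(T_\eta)\varepsilon^\theta$ without any compactness extraction of global trajectories.

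What the paper's $L^\infty$ bound buys is a cleaner treatment of exactly the step you flag as the main obstacle. From $\|w^\varepsilon\|_{L^\infty(\Omega)}\le N$ you get $\|EMw^\varepsilon\|_{L^\infty(\Omega)}\le N$. Set $K=\max\{N,m_f\}$ and test the equation with $(u-K)^+$ and $(u+K)^-$. By \eqref{eq:diss}, $f(u)\le 0$ where $u>K$ and $f(u)\ge 0$ where $u<-K$, so both trajectories stay in $\{|u|\le K\}$ for all $t\ge0$. On that set $F_\varepsilon$ is Lipschitz from $X_\varepsilon$ to $X_\varepsilon$ with an $\varepsilon$-independent constant. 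Your Gronwall step then needs neither of the following:
\begin{itemize}
\item[(i)] a uniform-in-$\varepsilon$ bound on the trajectory issuing from $EMw^\varepsilon$ over $[0,T]$ (Theorem~\ref{teosolgloblim1-eps} is only a $\limsup$ statement and does not directly provide one);
\item[(ii)] the $X^\alpha_\varepsilon$-version of Lemma~\ref{lema:nemytskii}, which the paper only asserts in a remark.
\end{itemize}
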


\demo
The upper semicontinuity follows from Theorem \ref{contsemig} and the uniform $L^\infty(\Omega)$ bounds on the attractors \eqref{eq:liminf}.  For more details, see for instance \cite[Proposition 5.1]{german1}.\fimdemo

\subsection{Linearization}\label{section:5.4}

We are interested in analyzing the compact convergence of the resolvent operators associated with the family
$
\bar{L}_\varepsilon := L_\varepsilon + V_\varepsilon,$
where $V_\varepsilon := -F_\ep'(u_\varepsilon^*) \in \mathcal{L}(X_\varepsilon^{\alpha}, X_\varepsilon)$ denotes the linearization of the nonlinearity around an equilibrium $u_\varepsilon^* \in \mathcal{E}_\varepsilon$, for $\varepsilon \in [0, \varepsilon_0]$.

Since the family $\{u_\varepsilon^*\}_{\ep\in[0,\ep_0]}$ is uniformly bounded in $L^\infty(\Omega)$, the operators $V_\varepsilon$ are uniformly bounded in operator norm. Moreover, from the convergence $u_\varepsilon^* \to u_0^*$ established in Theorem~\ref{teoconvequil}, we deduce the estimate $\|V_\varepsilon - V_0\|_{\mathcal{L}(X_\varepsilon^\alpha, X_\varepsilon)} \leq C \varepsilon.$

As $L_\varepsilon$ is sectorial and $V_\varepsilon$ is bounded, it follows that $\bar{L}_\varepsilon$ is also sectorial with uniform constants.
Defining the auxiliary operator $B_\varepsilon := V_\varepsilon L_\varepsilon^{-1}$, we exploit the regularity of the resolvent $L_\varepsilon^{-1}$ and the uniform boundedness of $V_\varepsilon$ to deduce the compact convergence $B_\varepsilon \to B_0$ in an appropriate sense. In particular, for any $u_0 \in X_0$, we have
$$
B_\varepsilon E u_0 = V_\varepsilon L_\varepsilon^{-1} E u_0 \longrightarrow E B_0 u_0 \ \ \text{in } X_\varepsilon,
$$
due to the strong convergence $L_\varepsilon^{-1} E u_0 \to E L_0^{-1} u_0$ and the uniform bound $\|V_\varepsilon\|_{\mathcal{L}(X_\varepsilon^\alpha, X_\varepsilon)}  \leq C$.

Using the resolvent identity
$$
\bar{L}_\varepsilon^{-1} - \bar{L}_0^{-1} = (L_\varepsilon^{-1} - L_0^{-1})(I + B_0)^{-1} - L_\varepsilon^{-1}(I + B_0)^{-1}(B_\varepsilon - B_0)(I + B_\varepsilon)^{-1},
$$
we conclude the compact convergence $\bar{L}_\varepsilon^{-1} \rightarrow \bar{L}_0^{-1} \ \text{in } \mathcal{L}(X_\varepsilon, X_\varepsilon^\alpha),$ with convergence rate $\ep$.

Finally, the compactness of $L_\varepsilon^{-1} \colon X_\varepsilon \to X_\varepsilon^\alpha$, together with the stability of the extension operator $E$, allows us to obtain, for $\lambda \in \rho(-\bar{L}_\varepsilon)$,
\begin{equation}\label{ConvResolLin}
  \left\| (\lambda I + \bar{L}_\varepsilon)^{-1} E - E(\lambda I + \bar{L}_0)^{-1} \right\|_{\mathcal{L}(X_0, X^\alpha_\varepsilon)} \leq C \varepsilon.  
\end{equation}

\begin{lemma}\label{lemma:ConvSemigroupsLinPert}
Let $\theta \in (0, 1)$. Then, there exists a constant $C>0$, independent of $\varepsilon$, such that
$$\left\| e^{-\bar{L}_\varepsilon t} E - E e^{-\bar{L}_0 t} \right\|_{\mathcal{L}(X_0, X^\alpha_\varepsilon)} 
\leq C e^{-\delta(1-\theta)t}t^{-\alpha(1-\theta)-\theta} \varepsilon^\theta, \ \ \text{for all } t>0.$$
\end{lemma}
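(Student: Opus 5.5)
The plan is to repeat the proof of Lemma \ref{lemma:ConvSemigruposLin} with $L_\varepsilon$ replaced by $\bar L_\varepsilon = L_\varepsilon + V_\varepsilon$. The resolvent estimate \eqref{ConvResolLin} plays the role of Lemma \ref{lemma:COnvResolvent} together with Remark \ref{remark54}. There is one preliminary point. The statement asks for the decay factor $e^{-\delta(1-\theta)t}$, so we need $\Re\sigma(\bar L_\varepsilon) > \delta > 0$ uniformly in $\varepsilon$. If $u_0^*$ is not stable this fails, and we interpret $\delta$ as a real number, possibly negative, with $\Re\sigma(\bar L_0) > \delta$.

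\emph{Step 1: uniform spectral bounds.} Since $\bar L_\varepsilon$ is sectorial with uniform constants (the perturbation $V_\varepsilon$ is bounded in $\mathcal{L}(X^\alpha_\varepsilon,X_\varepsilon)$ uniformly), the numerical range argument gives a uniform sector containing $\sigma(\bar L_\varepsilon)$. Combined with the compact convergence $\bar L_\varepsilon^{-1}\to\bar L_0^{-1}$ and Lemma \ref{cpto} applied to $\bar L_\varepsilon$, this gives spectral convergence. Hence we may choose $\delta$ with $\Re\sigma(\bar L_\varepsilon) > \delta$ for all small $\varepsilon$, and we obtain the analogue of \eqref{des:semHenry}:
\[
\|e^{-\bar L_\varepsilon t}\|_{\mathcal L(X_\varepsilon,X^\alpha_\varepsilon)}\le M_\delta e^{-\delta t}t^{-\alpha}, \qquad t>0,
\]
with $M_\delta$ independent of $\varepsilon$.

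\emph{Step 2: the $O(\varepsilon t^{-1})$ bound.} Write
\[
e^{-\bar L_\varepsilon t}E-Ee^{-\bar L_0 t}=\frac{1}{2\pi i}\int_{\Gamma_\delta}\Big((\lambda I+\bar L_\varepsilon)^{-1}E-E(\lambda I+\bar L_0)^{-1}\Big)e^{\lambda t}\,d\lambda .
\]
Here $\Gamma_\delta$ is the boundary of a sector with vertex at $-\delta$ and opening angle $\pi-\phi$, placed to the right of $-\sigma(\bar L_\varepsilon)$. By Corollary \ref{cor:UnifConstant}, the constant in \eqref{ConvResolLin} is uniform for $\lambda\in\Gamma_\delta$; this uses the same product structure $C^\varepsilon(\lambda)$, which is bounded on such a contour. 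Substituting $\mu=(\lambda+\delta)t$ yields
\[
\|e^{-\bar L_\varepsilon t}E-Ee^{-\bar L_0 t}\|_{\mathcal L(X_0,X^\alpha_\varepsilon)}\le C\varepsilon\, e^{-\delta t}t^{-1}.
\]
The shifted contour keeps the exponential factor, which is slightly better than \eqref{des:semigLin2}.

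\emph{Step 3: interpolation.} From Step 1 and the boundedness of $E$ we get $\|e^{-\bar L_\varepsilon t}E-Ee^{-\bar L_0 t}\|\le C e^{-\delta t}t^{-\alpha}$. Taking the geometric mean of this bound with weight $1-\theta$ and the Step 2 bound with weight $\theta$ gives $C e^{-\delta t}t^{-\alpha(1-\theta)-\theta}\varepsilon^\theta$, which implies the stated bound since $e^{-\delta t}\le e^{-\delta(1-\theta)t}$ when $\delta>0$.

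The main obstacle is Step 1 together with the uniformity of $C^\varepsilon(\lambda)$ along the contour. Estimate \eqref{ConvResolLin} was obtained only for $\lambda\in\rho(-\bar L_\varepsilon)$, with constants tied to distances from the spectra. To make these uniform I would first invoke the spectral convergence of Lemma \ref{cpto} on compact pieces of the contour. For large $|\lambda|$ I would use the uniform sectoriality: there the $|\lambda|/\mathrm{dist}(\lambda,\sigma)$ factors are controlled by the sector angle, as in Corollary \ref{cor:UnifConstant}(ii).
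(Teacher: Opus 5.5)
Your proposal is correct and follows the paper's argument. The paper's proof only says to rerun the proof of Lemma \ref{lemma:ConvSemigruposLin} with the sectorial bound for $\bar L_\varepsilon$ and \eqref{ConvResolLin}, and your Steps 1--3 do exactly that. You also make explicit what the paper leaves implicit: the uniformity of the resolvent constant along the contour, the shifted vertex at $-\delta$, and your correct observation that decay with $\delta>0$ cannot be expected when $u_0^*$ is unstable.

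The only loose end is the case $\delta<0$, where your final inequality $e^{-\delta t}\le e^{-\delta(1-\theta)t}$ reverses. Replacing $\delta$ by $\delta/(1-\theta)<\delta$, which still lies below $\sigma(\bar L_\varepsilon)$, puts your bound in the stated form.
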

\demo
The result follows by applying the sectorial estimates for the perturbed operators $\bar{L}_\varepsilon$ (as in \eqref{des:semHenry}), together with \eqref{ConvResolLin} and similar arguments to those used in the proof of Lemma \ref{lemma:ConvSemigruposLin} for the linear semigroups.
\fimdemo
\medskip

For each $\varepsilon\in[0,\varepsilon_0]$, the spectrum of $\bar{L}_\varepsilon$ (ordered and counted with multiplicity) reads $$\sigma(\bar{L}_\varepsilon) = \{ \lambda_{m}^{\varepsilon} \}^{\infty}_{m=1} \,\,\, \text{and} \,\,\, 0 < c' \leq \lambda_{1}^{\varepsilon} \leq \lambda_{2}^{\varepsilon} \leq \cdots \leq \lambda_{m}^{\varepsilon} \leq \cdots$$ 
with corresponding eigenfunctions $\{\varphi_i^\varepsilon\}_{i=1}^\infty$. 
Let $\bar\Gamma$ be a smooth, closed, simple, rectifiable curve contained in
$\{z\in\mathbb C:\operatorname{Re} z>0\}$, oriented counterclockwise, such that the
bounded connected component of $\mathbb C\setminus\bar\Gamma$ contains a finite number of eigenvalues of $\bar L_0$.
By the continuity of the spectrum with respect to $\varepsilon$
there exists $\varepsilon_{\bar\Gamma}>0$ such that $\bar\Gamma\subset \rho(\bar L_\varepsilon)\ \text{for all }\,0\le\varepsilon\le\varepsilon_{\bar\Gamma}.$
Hence the (Riesz) spectral projection is given by 
\begin{equation}\label{def:proj}
Q_\varepsilon^{+}
=\frac{1}{2\pi i}\int_{\bar\Gamma} (I\lambda+\bar L_\varepsilon)^{-1}\,d\lambda,
\quad 0\le\varepsilon\le\varepsilon_{\bar\Gamma}.
\end{equation}

Using the resolvent estimate \eqref{ConvResolLin}, there exist
$C>0$ (independent of $\varepsilon$) such that
\begin{equation}\label{eq:QeQ0}
\bigl\|\,Q_\varepsilon^{+}E - E Q_{0}^{+}\,\bigr\|_{\mathcal L(X_0, X_\varepsilon^\alpha)}
\leq \frac{|\bar\Gamma|}{2\pi} \
\sup\limits_{\lambda\in\bar\Gamma}
\bigl\|(I\lambda+\bar L_\varepsilon)^{-1}E - E(I\lambda+\bar L_0)^{-1}\bigr\|_{\mathcal L(X_0, X_\varepsilon^\alpha)}
\leq C\varepsilon,
\end{equation}
(see \cite {flank, arrieta, carvalho,leonardo}).

\subsection{Rate of convergence and attraction of local unstable manifolds}

In this section, we analyze the continuity and approximation of local unstable manifolds associated with a hyperbolic equilibrium of \eqref{eq1}.

Let $u_{\varepsilon}^*$ be a hyperbolic equilibrium of \eqref{eq1} for each $\varepsilon \in [0, \varepsilon_0]$. Introducing the perturbation variable $\omega^\varepsilon := u^\varepsilon - u_\varepsilon^*$, we rewrite the system \eqref{eq1} as
\begin{equation}\label{eq:perturbed_shifted}
    \omega^{\varepsilon}_t + \bar{L}_\varepsilon \omega^\varepsilon  = \mathcal{F}_\varepsilon(\omega^\varepsilon),
\end{equation}
where $\mathcal{F}_\varepsilon(\omega^\varepsilon) := F_\ep(\omega^\varepsilon + u_\varepsilon^*) - F_\ep(u_\varepsilon^*) - F'_\ep(u_\varepsilon^*)\omega^\varepsilon.$

We consider the spectral decomposition of $X_\varepsilon^{\al}$ induced by the spectral projection $Q_\varepsilon^+$ associated with the unstable spectrum of $\bar{L}_\varepsilon$, as in \eqref{def:proj}. Consequently, we divide $\omega^\varepsilon = v^\varepsilon + z^\varepsilon$ with $v^\varepsilon := Q_\varepsilon^+ \omega^\varepsilon$ and $z^\varepsilon := (I - Q_\varepsilon^+)\omega^\varepsilon$, and decompose the equation \eqref{eq:perturbed_shifted} into the system
\begin{equation}\label{eq:unstable-stable-decomp}
\left\{
\begin{aligned}
    v^\ep_t + (\bar{L}_\varepsilon)^+ v^\varepsilon &= H_\varepsilon(v^\varepsilon, z^\varepsilon), \\
    z^\ep_t + (\bar{L}_\varepsilon)^- z^\varepsilon &= G_\varepsilon(v^\varepsilon, z^\varepsilon),
\end{aligned}
\right.
\end{equation}
where $(\bar{L}_\varepsilon)^\pm$ denote the restrictions of $\bar{L}_\varepsilon$ to $\mathrm{Ran}(Q_\varepsilon^+)$ and $\mathrm{Ker}(Q_\varepsilon^+)$, respectively.
The nonlinear terms in \eqref{eq:unstable-stable-decomp} are given by
\begin{align*}
H_\varepsilon(v^\varepsilon, z^\varepsilon) &:= Q^+_\varepsilon \mathcal{F}_\varepsilon(\omega^\varepsilon), \quad
G_\varepsilon(v^\varepsilon, z^\varepsilon) := (I - Q^+_\varepsilon) \mathcal{F}_\varepsilon(\omega^\varepsilon)
\end{align*}
and note that $H_{\varepsilon}(0, 0) = 0 = G_{\varepsilon}(0, 0)$. Moreover, the functions $H_{\varepsilon}$ and $G_{\varepsilon}$ are continuously differentiable, with vanishing derivatives at the origin, that is, 
$H_{\varepsilon}^{'}(0, 0) = 0 = G_{\varepsilon}^{'}(0, 0).$

Therefore, given any $\rho > 0$, there exists $\delta > 0$ such that, if $(v^{\varepsilon}, z^{\varepsilon}), (\tilde{v}^{\varepsilon}, \tilde{z}^{\varepsilon})\in B_{\delta}(0,0)$ and $\varepsilon \in (0, \varepsilon_0]$, we have
\begin{eqnarray}\label{eq:511}
	\ds \left\|H_{\varepsilon}(v^{\varepsilon}, z^{\varepsilon})\right\|_{X_{\varepsilon}^{\al}} &\!\!\leq\!\!& \rho, \quad 
	\left\|G_{\varepsilon}(v^{\varepsilon}, z^{\varepsilon})\right\|_{X_{\varepsilon}^{\al}} \,\leq\, \rho, \nonumber\\
	\left\|H_{\varepsilon}(v^{\varepsilon}, z^{\varepsilon}) - H_{\varepsilon}(\tilde{v}^{\varepsilon}, \tilde{z}^{\varepsilon}) \right\|_{X_{\varepsilon}^{\al}} &\!\!\leq\!\!& \rho\left(\left\|v^{\varepsilon}- \tilde{v}^{\varepsilon}\right\|_{X_{\varepsilon}^{\al}} + \left\|z^{\varepsilon}- \tilde{z}^{\varepsilon}\right\|_{X_{\varepsilon}^{\al}}\right), \label{eq620}\\
	\left\|G_{\varepsilon}(v^{\varepsilon}, z^{\varepsilon}) - G_{\varepsilon}(\tilde{v}^{\varepsilon}, \tilde{z}^{\varepsilon}) \right\|_{X_{\varepsilon}^{\al}} &\!\!\leq\!\!& \rho\left(\left\|v^{\varepsilon}- \tilde{v}^{\varepsilon}\right\|_{X_{\varepsilon}^{\al}} + \left\|z^{\varepsilon}- \tilde{z}^{\varepsilon}\right\|_{X_{\varepsilon}^{\al}}\right).\nonumber
\end{eqnarray}

Since our analysis focuses on the behavior of solutions near $(0, 0)$, we may truncate the nonlinearities $H_{\varepsilon}$ and $G_{\varepsilon}$ outside the ball $B_{\delta}(0 ,0)$ to ensure that the estimates in \eqref{eq620} hold globally.
Moreover, since $\bar{L}_{\varepsilon}$ is a sectorial operator, we can take suitable constants $\tilde{M}$, $\beta > 0$ (independent of $\varepsilon \in [0,\varepsilon_0]$), such that the following semigroup estimates hold
\begin{eqnarray}\label{eq:512}
	\ds \Vert e^{-t(\bar{L}_{\varepsilon})^+}v\Vert_{X_{\varepsilon}^{\al}} &\!\!\leq\!\!& \tilde{M} e^{\beta t}\left\|v\right\|_{X_{\varepsilon}^{\al}}, \quad t \leq 0,\nonumber\\
	\Vert e^{-t(\bar{L}_{\varepsilon})^-}z\Vert_{X_{\varepsilon}^{\al}} &\!\!\leq\!\!& \tilde{M} e^{-\beta t}\left\|z\right\|_{X_{\varepsilon}^{\al}}, \quad t \geq 0,\label{eq621}\\
	\Vert e^{-t(\bar{L}_{\varepsilon})^-}z\Vert_{X_{\varepsilon}^{\al}} &\!\!\leq\!\!& \tilde{M} t^{-\al}e^{-\beta t}\left\|z\right\|_{X_{\varepsilon}^{\al}}, \quad t > 0.\nonumber
\end{eqnarray}

These bounds enable us to construct the local unstable manifold of the equilibrium $u_\varepsilon^*$ as a graph over the unstable eigenspace of $\bar{L}_\varepsilon$, with Lipschitz dependence on the initial data.

\begin{theorem}\label{teo:unstable-manifold}
Let $u_0^*$ be a hyperbolic equilibrium of the limit problem \eqref{eq1}. Then, for any given constants $D > 0$, $\tilde{M} > 0$, $\Delta > 0$ and $0 < \nu < 1$, there exists $\ep_0>0$ and $\rho_0 > 0$ such that for all $0 < \rho \leq \rho_0$, the following inequalities are satisfied:
\begin{eqnarray*}
&& \rho \tilde{M} \beta^{-\al} \Gamma\left(\al\right) \leq D, \quad \rho \tilde{M}^2 (1+\Delta) \beta^{-\al} \Gamma\left(\al\right) \leq \Delta, \quad \frac{\beta}{2} \leq \beta - \rho \tilde{M} (1+\Delta) \leq \beta, \\
&& \rho \tilde{M} \beta^{-\al} \Gamma\left(\al\right) \!\!\left[\!1 +\!\frac{\rho \tilde{M} (1+\Delta)}{\beta - \rho \tilde{M} (1+\Delta)}\!\right]\! \leq \nu < 1, \quad \rho_1\! := \!\beta -\!\! \left[ \!\rho \tilde{M}\! - \!\frac{\rho^2 \tilde{M}^2 (1+\Delta)(1+\tilde{M})}{2\beta - \rho \tilde{M} (1+\Delta)} \!\right]\! > 0, \\
&& \rho \tilde{M} \beta^{-\al} \Gamma\left(\al\right) \left[1 + \rho \tilde{M} (1+\Delta) \beta^{-\al} \left(2\beta - \rho \tilde{M}(1+\Delta)\right)^{-\al} \right] \leq \tfrac{1}{2}
\end{eqnarray*}
and for the choice of $\rho$ above, let us assume that the nonlinearities $H_\ep$ and $G_\ep$ satisfy the global Lipschitz estimates in \eqref{eq620} for all $\ep \in [0,\ep_0]$.

Then, there exists a Lipschitz continuous map
$S_\varepsilon^* : Q_\varepsilon^+ X_\varepsilon^{\al} \to (I - Q_\varepsilon^+) X_\varepsilon^{\al}$
such that
\begin{eqnarray*}
\left\||S_{\varepsilon}^*|\right\|:=\sup\limits_{v^{\varepsilon}\in Q_{\varepsilon}^+X_{\varepsilon}^{\al}}\left\|S_{\varepsilon}^*(v^{\varepsilon})\right\|_{X_{\varepsilon}^{\al}}\leq D, \ \ \ \ \left\|S_{\varepsilon}^*(v^{\varepsilon}) - S_{\varepsilon}^*(\tilde{v}^{\varepsilon})\right\|_{X_{\varepsilon}^{\al}} \leq \Delta\left\|v^{\varepsilon}-\tilde{v}^{\varepsilon}\right\|_{Q_{\varepsilon}^+X_{\varepsilon}^{\al}},
\end{eqnarray*}
and the unstable manifold of $u_\varepsilon^*$ is given by the graph
$$W_\varepsilon^u := \left\{ \omega^\ep = v^\varepsilon + z^\varepsilon \in X_\varepsilon^{\al} : z^\varepsilon = S_\varepsilon^*(v^\varepsilon), \ v^\varepsilon \in Q_\varepsilon^+ X_\varepsilon^{\al} \right\}.$$

Moreover, for any $t \geq t_0$ and for orbits $(v^\varepsilon(t), z^\varepsilon(t))$ remaining sufficiently close to $u_\varepsilon^*$, we have the exponential attraction estimate
$$\left\| z^\varepsilon(t) - S_\varepsilon^*(v^\varepsilon(t)) \right\|_{X_\varepsilon^{\al}} \leq \tilde{M} e^{-\rho_1(t - t_0)} \left\| z^\varepsilon(t_0) - S_\varepsilon^*(v^\varepsilon(t_0)) \right\|_{X_\varepsilon^{\al}}.$$

Finally, for every $\theta \in \left(0, 1\right)$, there exists a constant $C > 0$ independent of $\varepsilon$ such that
\begin{equation}\label{eqn:estimate-Se-So}
    \left\||S_{\varepsilon}^*E - ES_0^*|\right\| \leq C \varepsilon^{\theta}.
\end{equation}
\end{theorem}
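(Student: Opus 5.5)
The plan is to follow the classical Lyapunov--Perron construction of unstable manifolds for sectorial problems, as in Henry and in the thin-domain works cited above (e.g.\ \cite{esperanza, flank}). The aim is to build $S^*_\varepsilon$ as a fixed point, uniformly in $\varepsilon$, and then compare $S^*_\varepsilon E$ with $ES^*_0$ using the linear estimates \eqref{eq:QeQ0} and Lemma \ref{lemma:ConvSemigroupsLinPert}.

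\textbf{Step 1: existence of $S^*_\varepsilon$ for fixed $\varepsilon$.} Consider the space $\mathcal{LB}(D,\Delta)$ of maps $S:Q^+_\varepsilon X^\alpha_\varepsilon\to (I-Q^+_\varepsilon)X^\alpha_\varepsilon$ with $\sup\|S\|\le D$ and Lipschitz constant at most $\Delta$. For $S$ in this space and $\eta\in Q^+_\varepsilon X^\alpha_\varepsilon$, let $v(t)=v_S(t,\eta)$, $t\le 0$, solve $v_t+(\bar L_\varepsilon)^+v=H_\varepsilon(v,S(v))$ with $v(0)=\eta$. This is an ODE on a finite-dimensional space, so it is globally solvable backwards thanks to the global Lipschitz bounds \eqref{eq620}. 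Define
\[
\Phi_\varepsilon(S)(\eta)=\int_{-\infty}^0 e^{s(\bar L_\varepsilon)^-}G_\varepsilon\bigl(v(s),S(v(s))\bigr)\,ds .
\]
The estimates \eqref{eq621}, Gronwall's inequality (which gives $\|v(t)-\tilde v(t)\|\le\tilde M e^{-(\beta-\rho\tilde M(1+\Delta))t}\|\eta-\tilde\eta\|$ for $t\le0$) and the listed smallness conditions on $\rho$ show that $\Phi_\varepsilon$ maps $\mathcal{LB}(D,\Delta)$ into itself. They also show that $\Phi_\varepsilon$ is a contraction with constant $\nu$. The invariance of the graph follows in the standard way, and so does the exponential attraction with rate $\rho_1$. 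For the attraction one sets $\xi(t)=z(t)-S^*_\varepsilon(v(t))$ and estimates via the variation of constants formula together with the last inequality among the hypotheses. All constants depend only on $\tilde M,\beta,\rho$, and are therefore uniform in $\varepsilon$.

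\textbf{Step 2: the rate \eqref{eqn:estimate-Se-So}.} Fix $\eta\in Q^+_0X^\alpha_0$ and compare $S^*_\varepsilon(Q^+_\varepsilon E\eta)$ with $ES^*_0(\eta)$. Bounding $S^*_\varepsilon E\eta-S^*_\varepsilon Q^+_\varepsilon E\eta$ by Lipschitz continuity and \eqref{eq:QeQ0} costs $C\varepsilon$. Then write $S^*_\varepsilon=\Phi_\varepsilon(S^*_\varepsilon)$ and $S^*_0=\Phi_0(S^*_0)$ and split the difference into three pieces:
\begin{itemize}
\item[(a)] the difference of the linear flows on the stable part, $e^{s(\bar L_\varepsilon)^-}(I-Q^+_\varepsilon)E-Ee^{s(\bar L_0)^-}(I-Q^+_0)$, which by Lemma \ref{lemma:ConvSemigroupsLinPert} and \eqref{eq:QeQ0} is bounded by $Ce^{\beta s/2}|s|^{-\alpha(1-\theta)-\theta}\varepsilon^\theta$;
\item[(b)] the difference of the nonlinearities, $G_\varepsilon(Ev,Ez)-EG_0(v,z)$, which is $O(\varepsilon)$ because $F_\varepsilon E=EF_0$, $\|u^*_\varepsilon-Eu^*_0\|\le C\varepsilon$ by Theorem \ref{teoconvequil}, and $\|V_\varepsilon-V_0\|\le C\varepsilon$;
\item[(c)] a term of the form $\rho\,\|v_\varepsilon(s)-Ev_0(s)\|+\rho\,\||S^*_\varepsilon E-ES^*_0|\|$.
\end{itemize}

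\textbf{Step 3: the backward orbits and closing the estimate.} The quantity $\|v_\varepsilon(s)-Ev_0(s)\|$ for $s\le0$ is handled the same way on the finite-dimensional unstable part, using the convergence of the restricted linear flows (again via the Riesz projection and \eqref{ConvResolLin}). A Gronwall argument then gives a bound by $e^{-\beta s/2}\bigl(C\varepsilon^\theta+C\rho\||S^*_\varepsilon E-ES^*_0|\|\bigr)$. Inserting this into Step 2 and integrating over $(-\infty,0]$, the factor $|s|^{-\alpha(1-\theta)-\theta}$ is integrable because $\alpha(1-\theta)+\theta<1$, and the growth $e^{-\beta s/2}$ is compensated by $e^{\beta s}$. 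The result is
\[
\||S^*_\varepsilon E-ES^*_0|\|\le C\varepsilon^\theta+\nu\,\||S^*_\varepsilon E-ES^*_0|\|,
\]
and absorbing the last term using $\nu<1$ yields \eqref{eqn:estimate-Se-So}.

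\textbf{Main obstacle.} The delicate step is Step 3, the uniform-in-$\varepsilon$ control of the backward unstable dynamics. One needs the unstable spaces $Q^+_\varepsilon X^\alpha_\varepsilon$ to have the same dimension as $Q^+_0X^\alpha_0$ and to be uniformly isomorphic to it via $Q^+_\varepsilon E$. This comes from \eqref{eq:QeQ0}, since for small $\varepsilon$ the restriction of $Q^+_\varepsilon E$ is invertible with uniformly bounded inverse. With that in hand, the backward exponential bound must be propagated while keeping the singular weight $|s|^{-\alpha(1-\theta)-\theta}$ integrable, which is exactly where the loss from $\varepsilon$ to $\varepsilon^\theta$ enters.
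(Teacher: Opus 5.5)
Your proposal follows the paper's proof essentially step for step. Both use the Lyapunov--Perron fixed point for $S^*_\varepsilon$ on the space of $D$-bounded, $\Delta$-Lipschitz graphs, invariance of the graph, and attraction via $\xi=z-S^*_\varepsilon(v)$. Both obtain the rate from the same three-way split (linear-flow difference, nonlinearity difference $G_\varepsilon(Ev,Ez)-EG_0(v,z)$, Lipschitz terms), closed by a Gronwall bound on $v_\varepsilon-Ev_0$ and absorption of $\||S^*_\varepsilon E-ES^*_0|\|$. Your passage through $Q^+_\varepsilon E\eta$ is slightly more careful than the paper, which evaluates $S^*_\varepsilon$ at $E\eta\notin Q^+_\varepsilon X^\alpha_\varepsilon$ without comment.

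One slip to correct in Step 1. Backward orbits on the unstable part contract, because $\|e^{-t(\bar L_\varepsilon)^+}\|\le\tilde M e^{\beta t}$ for $t\le 0$. So Gronwall gives $\|v(t)-\tilde v(t)\|_{X^\alpha_\varepsilon}\le\tilde M e^{(\beta-\rho\tilde M(1+\Delta))t}\|\eta-\tilde\eta\|_{X^\alpha_\varepsilon}$ for $t\le 0$, not the growing exponent you wrote. The listed smallness conditions are calibrated to this decaying bound; it is what produces the factor $2\beta-\rho\tilde M(1+\Delta)$. With your sign, the Lipschitz constant of $\Phi_\varepsilon(S)$ comes out only of order $\rho^{\alpha}$ and is not controlled by those conditions.
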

\demo
Following Henry’s framework \cite{henry}, we establish existence applying Banach's fixed-point theorem and (singular) Gronwall's inequality at various stages of the proof.

For $v^\ep,\tilde{v}^\ep \in Q_{\varepsilon}^+X_{\varepsilon}^{\alpha}$, consider the space
\[
\mathcal{X}_{\varepsilon} := \left\{ S_\varepsilon : Q_{\varepsilon}^+X_{\varepsilon}^{\alpha} \to (I - Q_{\varepsilon}^+)X_{\varepsilon}^{\alpha}: \ \||S_\varepsilon|\| \leq D, \ \|S_\varepsilon(v^\varepsilon) - S_\varepsilon(\tilde{v}^\varepsilon)\|_{X^\al_\ep} \leq \Delta \|v_\varepsilon - \tilde{v}_\varepsilon\|_{Q_{\varepsilon}^+X_{\varepsilon}^{\alpha}} \right\}.
\]
This space with the norm $\||\cdot|\|$ is a complete metric space.

We will divide the proof em four parts.

\vspace{0.3cm}

\noi \textit{Part 1. Existence}. Let $S_\varepsilon\in \mathcal{X}_{\varepsilon}$ and $v^{\varepsilon}(t)=\psi_{\varepsilon}(t, \tau, \eta, S_{\varepsilon})$ be the solution of
\begin{equation*}
\begin{cases} v_t^{\varepsilon} + (\bar{L}_{\varepsilon})^+v^{\varepsilon} = H_{\varepsilon}(v_{\varepsilon}, S_{\varepsilon}(v^{\varepsilon})), \ t < \tau \\  v^{\varepsilon}(t) = \eta,
\end{cases}
\end{equation*}
where $\eta \in Q_{\varepsilon}^+X_{\varepsilon}^{\al}$. We define the map $\Phi:\mathcal{X}_{\varepsilon}\to\mathcal{X}_{\varepsilon}$ by
\[
\Phi(S_\varepsilon)(\eta) := \int_{-\infty}^{\tau} e^{-(\bar{L}_\varepsilon)^-(\tau - s)} G_\varepsilon(v^\varepsilon(s), S_\varepsilon(v^\varepsilon(s)))\, \mathrm{d}s.
\]
Using the semigroup estimates for $(\bar{L}_\varepsilon)^\pm$ \eqref{eq:512}, the Lipschitz bounds for $G_\varepsilon$ \eqref{eq:511}, Gronwall's inequality and that the constants $D$, $\nu>1$ and $\Delta$ satisfy the conditions stated in the assumptions of this theorem, one shows that $\Phi$ maps $\mathcal{X}_\varepsilon$ into itself, provided that $\rho > 0$ is sufficiently small. Moreover, $\Phi$ is a contraction:
\[
\|\Phi(S_\varepsilon)(\eta) - \Phi(\tilde{S}_\varepsilon)(\tilde{\eta})\|_{X^\al_\ep} \leq \Delta \|\eta - \tilde{\eta}\|_{X^\al_\ep} + \nu \|S_\varepsilon - \tilde{S}_\varepsilon\|_{\mathcal{X}_\varepsilon}.
\]

Thus, there is a unique fixed point $S_\varepsilon^*=\Phi(S_\ep^*) \in \mathcal{X}_\varepsilon$ (for further details of the existence, see \cite{arrieta,elaine}).

\vspace{0.3cm}

\noindent\textit{Part 2. Invariance of the manifold.} We show that $W_{\varepsilon}^u := \left\{(v^{\varepsilon}, z^{\varepsilon}) \in X_{\varepsilon}^{\al} : z^{\varepsilon} = S_{\varepsilon}^*(v^{\varepsilon}),\ v^{\varepsilon} \in Q_{\varepsilon}^+X_{\varepsilon}^{\al} \right\}$ is an invariant manifold for the system \eqref{eq:unstable-stable-decomp}. Let an initial condition \((v^0, z^0) \in W_{\varepsilon}^u\), so that \(z^0 = S_\varepsilon^*(v^0)\), and let \(v_\varepsilon^*(t)\) be the solution of
\[
\begin{cases}
v^\ep_t + (\bar{L}_{\varepsilon})^+ v^{\varepsilon} = H_{\varepsilon}(v^{\varepsilon}, S_{\varepsilon}^*(v^{\varepsilon})), \\
v^{\varepsilon}(0) = v^0.
\end{cases}
\]
By uniqueness, this defines a trajectory \((v_\varepsilon^*(t), S_\varepsilon^*(v_\varepsilon^*(t)))\) in the graph of \(S_\varepsilon^*\). 

To verify invariance, it remains to show that \(z_\varepsilon^*(t) := S_\varepsilon^*(v_\varepsilon^*(t))\) solves the complementary equation $z^\ep_t + (\bar{L}_\varepsilon)^- z^\varepsilon = G_\varepsilon(v_\varepsilon^*(t), z^\varepsilon(t)).$

By the fixed point characterization of $S_\varepsilon^*$, we know that this is the unique bounded solution of the above equation backward in time, given by the variation of constants formula:
\[
z_\varepsilon^*(t) = \int_{-\infty}^{t} e^{-(\bar{L}_{\varepsilon})^-(t-s)} G_\varepsilon(v_\varepsilon^*(s), S_\varepsilon^*(v_\varepsilon^*(s)))\,\mathrm{d}s.
\]
Thus, $(v_\varepsilon^*(t), z_\varepsilon^*(t)) \in W_\varepsilon^u$ for all $t \in \mathbb{R}$. Hence, the trajectory remains in \(W_\varepsilon^u\) for all \(t\), proving that the graph of \(S_\varepsilon^*\) is invariant under the semiflow associated to \eqref{eq:unstable-stable-decomp}.

\vspace{0.3cm}

\noindent\textit{Part 3. Exponential attraction property.} Let $(v^\ep,z^\ep) \in X_\ep^\al$ be the solution of \eqref{eq:unstable-stable-decomp} and define $\xi_{\varepsilon}(t)=z^{\varepsilon}(t) - S_{\varepsilon}^*(v^{\varepsilon}(t))$ and  $y^{\varepsilon}(s,t)$, $s\leq t$, the solution  
\begin{equation*}
\begin{array}{rr}
\left\{ \begin{array}{lll} y^{\varepsilon}_t + (\bar{L}_{\varepsilon})^+y^{\varepsilon} = H_{\varepsilon}(y^{\varepsilon}, S^*_{\varepsilon}(y^{\varepsilon})), \ s \leq t \\  y^{\varepsilon}(t,t) = v^{\varepsilon}(t).\end{array}\right.
\end{array}
\end{equation*}
Using \eqref{eq620},  \eqref{eq621} and  Gronwall's inequality, we get
\begin{align}
\left\|y^{\varepsilon}(s,t) -v^{\varepsilon}(s)\right\|_{X_{\varepsilon}^{\al}} 
&\leq \rho \tilde{M}\int^{t}_{s}e^{-[\beta-\rho \tilde{M}(1+\Delta)](\theta-s)} \left\|\xi_{\varepsilon}(\theta)\right\|_{X_{\varepsilon}^{\al}}\mathrm{d}\theta, \ s\leq t.\label{eq625}
\end{align}
Now, consider $s\leq t_0\leq t$. Then, by  \eqref{eq625} and  Gronwall's inequality, we have 
\begin{eqnarray}
	\ds&&\hspace{-1.5cm}\left\|y^{\varepsilon}(s,t) -y^{\varepsilon}(s,t_0)\right\|_{X_{\varepsilon}^{\al}}  \leq \rho \tilde{M}^2\int_{t_0}^{t}e^{-[\beta-\rho \tilde{M}(1+\Delta)](\theta-s)}\left\|\xi_{\varepsilon}(\theta)\right\|_{X_{\varepsilon}^{\al}}\mathrm{d}\theta.\label{eq626}
\end{eqnarray}
\noindent Let us now estimate the term $\left\|\xi_{\varepsilon}(t)\right\|_{X_{\varepsilon}^{\al}}$. Note that 
\begin{eqnarray*}
\ds \xi_{\varepsilon}(t)\! -\! e^{-(\bar{L}_{\varepsilon})^-(t-t_0)}\xi_{\varepsilon}(t_0)\!\!\!\!\!\!\!\!\! && =z^{\varepsilon}(t) - S_{\varepsilon}^*(v^{\varepsilon}(t)) - e^{-(\bar{L}_{\varepsilon})^-(t-t_0)}\left[z^{\varepsilon}(t_0) - S_{\varepsilon}^*(v^{\varepsilon}(t_0))\right]\\ 
 &&= \int_{t_0}^{t}\!e^{-(\bar{L}_{\varepsilon})^-(t-s)}\left[G_{\varepsilon}(v^{\varepsilon}(s),z^{\varepsilon}(s))-G_{\varepsilon}(y^{\varepsilon}(s,t),S^*_{\varepsilon}(y^{\varepsilon}(s,t)))\right]\!\mathrm{d}s\\ && -\!\!\int_{-\infty}^{t}\!\!\! e^{-(\bar{L}_{\varepsilon})^-(t-s)}\!\left[ G_{\varepsilon}(y^{\varepsilon}(s,t),S^*_{\varepsilon}(y^{\varepsilon}(s,t)))\!-\!G_{\varepsilon}(y^{\varepsilon}(s,t_0),S^*_{\varepsilon}(y^{\varepsilon}(s,t_0)))\right]\!\mathrm{d}s.
\end{eqnarray*}
From \eqref{eq620} and \eqref{eq621}, we have 
\begin{align*}
\big\|\xi_{\varepsilon}(t) - e^{-(\bar{L}_{\varepsilon})^-(t-t_0)}\xi_{\varepsilon}(t_0)\big\|_{X_{\varepsilon}^{\al}}  
 &\leq \rho \tilde{M}(1+\Delta)\!\int_{t_0}^{t}e^{-\beta(t-s)}\left\|v^{\varepsilon}(s)-y^{\varepsilon}(s,t)\right\|_{X_{\varepsilon}^{\al}}\mathrm{d}s
  \end{align*} \begin{align*} &+ \rho \tilde{M}\!\int_{t_0}^{t}e^{-\beta(t-s)}\left\|\xi_{\varepsilon}(s)\right\|_{X_{\varepsilon}^{\al}}\mathrm{d}s+\rho \tilde{M}(1+\Delta)\!\int_{-\infty}^{t}\!\! e^{-\beta(t-s)}\!\left\|y^{\varepsilon}(s,t)-y^{\varepsilon}(s,t_0)\right\|_{X_{\varepsilon}^{\al}}\mathrm{d}s.\end{align*}
Using \eqref{eq625}, \eqref{eq626}, and Fubini's theorem, we get 
\begin{align*}
 \|\xi_{\varepsilon}(t) &- e^{-(\bar{L}_{\varepsilon})^-(t-t_0)}\xi_{\varepsilon}(t_0)\|_{X_{\varepsilon}^{\al}}  \leq \rho \tilde{M}\!\int_{t_0}^{t}\!e^{-\beta(t-s)}\!\left\|\xi_{\varepsilon}(s)\right\|_{X_{\varepsilon}^{\al}}\!\mathrm{d}s + \rho^2 \tilde{M}^2(1+\Delta)e^{-\beta t}\times \\
 & \int_{t_0}^{t} e^{-[\beta-\rho \tilde{M}(1+\Delta)]\theta}\left\|\xi_{\varepsilon}(\theta)\right\|_{X_{\varepsilon}^{\al}}\int_{t_0}^{\theta}e^{[2\beta-\rho \tilde{M}(1+\Delta)]s}\!\mathrm{d}s\mathrm{d}\theta\\ 
 & +\rho^2 \tilde{M}^3(1+\Delta)e^{-\beta t}\!\int_{t_0}^{t} e^{-[\beta-\rho \tilde{M}(1+\Delta)]\theta}\left\|\xi_{\varepsilon}(\theta)\right\|_{X_{\varepsilon}^{\al}}\int_{-\infty}^{t_0}e^{[2\beta-\rho \tilde{M}(1+\Delta)]s} \mathrm{d}s\mathrm{d}\theta.
\end{align*}
Thus, 
\begin{eqnarray*}
&&\left\|\xi_{\varepsilon}(t) - e^{-(\bar{L}_{\varepsilon})^-(t-t_0)}\xi_{\varepsilon}(t_0)\right\|_{X_{\varepsilon}^{\al}} \leq \left[\rho \tilde{M} + \tfrac{\rho^2\tilde{M}^2(1+\delta)}{2\beta-\rho \tilde{M}(1+\Delta)}\right] \int_{t_0}^{t}\!e^{-\beta(t-s)}\!\left\|\xi_{\varepsilon}(s)\right\|_{X_{\varepsilon}^{\al}}\!\mathrm{d}s\\ &&\hspace{3.65cm}+ \tfrac{\rho^2 \tilde{M}^3(1+\Delta)}{2\beta-\rho \tilde{M}(1+\Delta)}e^{-\beta (t-t_0)}\!\int_{t_0}^{t}\!e^{-[\beta-\rho \tilde{M}(1+\Delta)](\theta-t_0)}\left\|\xi_{\varepsilon}(\theta)\right\|_{X_{\varepsilon}^{\al}}\!\mathrm{d}\theta.
\end{eqnarray*}
Using \eqref{eq621} and $\beta -\rho \tilde{M}(1+\Delta) \leq \beta$, we have 
\begin{align*}
&e^{\beta(t-t_0)}\left\|\xi_{\varepsilon}(t)\right\|_{X_{\varepsilon}^{\al}} \leq \tilde{M}\left\|\xi_{\varepsilon}(t_0)\right\|_{X_{\varepsilon}^{\al}} + \left[\rho \tilde{M} + \tfrac{\rho^2\tilde{M}^2(1+\Delta)}{2\beta-\rho \tilde{M}(1+\Delta)}\right] \int_{t_0}^{t}\!e^{\beta(s-t_0)}\!\left\|\xi_{\varepsilon}(s)\right\|_{X_{\varepsilon}^{\al}}\!\mathrm{d}s\\ 
&\hspace{4.15cm}+ \tfrac{\rho^2 \tilde{M}^3(1+\Delta)}{2\beta-\rho \tilde{M}(1+\Delta)}\!\int_{t_0}^{t}\!e^{-[2\beta-\rho \tilde{M}(1+\Delta)](s-t_0)}e^{\beta (s-t_0)}\left\|\xi_{\varepsilon}(s)\right\|_{X_{\varepsilon}^{\al}}\!\mathrm{d}s\\ 
&\leq \tilde{M}\left\|\xi_{\varepsilon}(t_0)\right\|_{X_{\varepsilon}^{\al}} + \left[\rho \tilde{M} +\tfrac{\rho^2 \tilde{M}^2(1+\Delta)(1+\tilde{M})}{2\beta-\rho \tilde{M}(1+\Delta)}\right]\!\int_{t_0}^{t}\!e^{\beta(s-t_0)}\left\|\xi_{\varepsilon}(s)\right\|_{X_{\varepsilon}^{\al}}\!\mathrm{d}s.
\end{align*}
Again by Gronwall's inequality it follows that $\left\|\xi_{\varepsilon}(t)\right\|_{X_{\varepsilon}^{\al}} \leq \tilde{M}\left\|\xi_{\varepsilon}(t_0)\right\|_{X_{\varepsilon}^{\al}}e^{-\rho_1(t-t_0)},$ where $\rho_1 = \beta - \big[\rho \tilde{M} +\tfrac{\rho^2 \tilde{M}^2(1+\Delta)(1+\tilde{M})}{2\beta-\rho \tilde{M}(1+\Delta)}\big] > 0$ is independent of $\varepsilon$. 

Therefore, 
 \begin{eqnarray*}
	\ds \left\|z^{\varepsilon}(t) - S_{\varepsilon}^*(v^{\varepsilon}(t))\right\|_{X_{\varepsilon}^{\al}} \leq \tilde{M}e^{-\rho_1(t-t_0)}\left\|z^{\varepsilon}(t_0) - S_{\varepsilon}^*(v^{\varepsilon}(t_0))\right\|_{X_{\varepsilon}^{\al}}, \ t\geq t_0.
\end{eqnarray*}
Since the full orbit $\{T_\varepsilon(t)u_0^\varepsilon:\,t\in\mathbb R\}$ is bounded, letting $t_0\to-\infty$ in the last estimate gives
$z^\varepsilon(t)=S_\varepsilon^*(v^\varepsilon(t))$ for all $t\in\mathbb R$, that is, $T_\varepsilon(t)u_0^\varepsilon\in W_\varepsilon^u:=\mathrm{graph}\,S_\varepsilon^*$.
\medskip

\noindent\textit{Part 4. Estimate.} Now, we will prove the estimate \eqref{eqn:estimate-Se-So}. 
For $\eta \in  Q_0^+ X_0^{\al}$, we have
 \begin{align*}
 &\left\|S^*_{\varepsilon}(E\eta) - ES^*_{0}(\eta)\right\|_{X_{\varepsilon}^{\al}} \\ 
  &\leq \int_{-\infty}^{\tau}\big\|e^{-(\bar{L}_{\varepsilon})^-(\tau-s)}G_{\varepsilon}(v^{\varepsilon}(s),S^*_{\varepsilon}(v^{\varepsilon}(s)))-e^{-(\bar{L}_{\varepsilon})^-(\tau-s)}G_{\varepsilon}(Ev^{0}(s),ES_0^*(v^{0}(s)))\big\|_{X_{\varepsilon}^{\al}}\mathrm{d}s\\ 
   &+ \int_{-\infty}^{\tau}\big\|e^{-(\bar{L}_{\varepsilon})^-(\tau-s)}G_{\varepsilon}(Ev^{0}(s),ES_0^*(v^{0}(s)))-e^{-(\bar{L}_{\varepsilon})^-(\tau-s)}EG_{0}(v^{0}(s),S_0^*(v^{0}(s)))\big\|_{X_{\varepsilon}^{\al}}\mathrm{d} \end{align*}\begin{align*}
  & + \int_{-\infty}^{\tau}\big\|e^{-(\bar{L}_{\varepsilon})^-(\tau-s)}EG_{0}(v^{0}(s),S_0^*(v^{0}(s)))-Ee^{-(\bar{L}_0)^-(\tau-s)}G_{0}(v^{0}(s),S_0^*(v^{0}(s)))\big\|_{X_{\varepsilon}^{\al}}\mathrm{d}s\\ 
  &:= I_1+I_2+I_3.
 \end{align*}
Thus,
\begin{eqnarray}
&&\hspace{-1.5cm}I_1
 \leq \tilde{M}\rho \Big(\beta^{-\al}\Gamma\left(\al\right)\!\left\|\vert S^*_{\varepsilon}- S_0^*\vert\right\|\! + \!(1+\Delta)\!\!\!\int_{-\infty}^{\tau}\!\!\!\!\!e^{-\beta(\tau-s)}(\tau-s)^{-\al}\!\left\|v^{\varepsilon}(s)- Ev^{0}(s)\right\|_{X_{\varepsilon}^{\al}}\!\!\mathrm{d}s\Big)\label{eq627}
\end{eqnarray}
On the other hand,
\begin{equation}
	\hspace{-0.2cm}\ds I_2 \leq \!\tilde{M}\!\!\int_{-\infty}^{\tau}e^{-\beta(\tau-s)}(\tau-s)^{-\al}\left\| G_{\varepsilon}(Ev^{0}(s),ES_0^*(v^{0}(s)))\!-EG_{0}(v^{0}(s),S_0^*(v^{0}(s)))\right\|_{X_{\varepsilon}^{\al}}\mathrm{d}s.\label{eq:I2-start}
\end{equation}

Writing, with $v^0=v^0(s)$ and $S_0=S_0^*(v^0(s))$,
\[
G_{\varepsilon}(E v^{0},E S_0)-E G_{0}(v^{0},S_0)
= (I-Q_{\varepsilon}^+)\,\mathcal Z_{\varepsilon}
+ (Q_{\varepsilon}^+E-E Q_{0}^+)\,\mathcal R_0(v^0,S_0), \ \text{where}
\]
\[
\mathcal Z_{\varepsilon}\!\!
:= \!\!\bigl[F_\varepsilon(Ev^0+E S_0+u_\varepsilon^*)-F_\varepsilon(u_\varepsilon^*)
       -F_\varepsilon'(u_\varepsilon^*)(Ev^0+E S_0)\bigr]
   \!-\!E\bigl[F_0(v^0+S_0+u_0^*)-F_0(u_0^*)-F_0'(u_0^*)(v^0+S_0)\bigr],
\] and $\mathcal R_0(v^0,S_0):=F_0(v^0+S_0+u_0^*)\!-\!F_0(u_0^*)-F_0'(u_0^*)(v^0+S_0).$

By the $C^{1}-$regularity (uniform in $\varepsilon$) of the Nemitskii's maps on bounded sets (see Lemma \eqref{lema:nemytskii} and \cite[Lemma 6.7]{patricia}),
\begin{equation}\label{eq:C1theta-remainder}
\|\mathcal Z_{\varepsilon}\|_{X_{\varepsilon}}
\;\le\; C\,\bigl(\|u_{\varepsilon}^*-E u_0^*\|_{X_{\varepsilon}^{\alpha}}
      + \|F'_{\varepsilon}-E F'_0\|_{\mathcal L(X_{\varepsilon}^{\alpha},X_{\varepsilon})}\bigr)\,
      \|E v^0+E S_0\|_{X_{\varepsilon}^{\alpha}} .
\end{equation}
Using the already proved equilibrium convergence
\(\|u_{\varepsilon}^*-E u_0^*\|_{X_{\varepsilon}^{\alpha}}\le C\varepsilon\),
we improve it to \(\varepsilon^{\theta}\) via the standard interpolation inequality
\begin{equation}\label{eq:interp}
\|w\|_{X_{\varepsilon}}
\;\le\; C\,\|w\|_{X_{\varepsilon}^{\alpha}}^{\,1-\theta}\,
            \|w\|_{X_{\varepsilon}^{1}}^{\,\theta}
\qquad (0<\theta\le 1).
\end{equation}
Moreover, by the resolvent estimate (see \eqref{ConvResolLin}) and
interpolation between $X_0$ and $X_0^{\alpha}$, 
\begin{equation}\label{eq:proj-diff}
\|F'_{\varepsilon}-E F'_0\|_{\mathcal L(X_{\varepsilon}^{\alpha},X_{\varepsilon})}
\;\le\; C\,\varepsilon^{\theta}.
\end{equation}
Since $v^0,S_0$ stay in a bounded ball, \(\|E v^0+E S_0\|_{X_{\varepsilon}^{\alpha}}\le C\).
Hence, from \eqref{eq:C1theta-remainder}--\eqref{eq:proj-diff},
\begin{equation}\label{eq:Ze-theta}
\|(I-Q_{\varepsilon}^{+})\mathcal Z_{\varepsilon}\|_{X_{\varepsilon}^{\alpha}}
\;\le\; C\,\varepsilon^{\theta}.
\end{equation}
By the Riesz formula and the resolvent difference bound (see \eqref{ConvResolLin} and \eqref{eq:QeQ0}),
\begin{equation}\label{eq:QeQ0-theta}
\|Q_{\varepsilon}^{+}E-EQ_0^{+}\|_{\mathcal L(X_0,X_{\varepsilon}^{\alpha})}
\;\le\; C\,\varepsilon^{\theta},
\end{equation}here the exponent $\theta$ comes from interpolation (combining the $\mathcal{O}(\varepsilon)$ resolvent difference in $X_\varepsilon$ with uniform sectorial bounds yields an estimate in the fractional space $X_\varepsilon^\alpha$) and since \(\mathcal R_0(v^{0},S_0)\) is bounded by the \(C^{1,\theta}\) regularity of the nonlinearity
on a bounded set, \(\|\mathcal R_0(v^{0},S_0)\|_{X_0}\le C\). Therefore
\begin{equation}\label{eq:proj-term}
\|(Q_{\varepsilon}^{+}E-EQ_0^{+})\,\mathcal R_0(v^{0},S_0)\|_{X_{\varepsilon}^{\alpha}}
\;\le\; C\,\varepsilon^{\theta}.
\end{equation}
Combining \eqref{eq:Ze-theta} and \eqref{eq:proj-term} in \eqref{eq:I2-start} gives
\begin{equation}\label{eq629}
    I_2 \;\le\; C\,\varepsilon^{\theta}\!
\int_{-\infty}^{\tau}\! e^{-\beta(\tau-s)}(\tau-s)^{-\alpha}\,\mathrm{d}s
\;=\; C\,\varepsilon^{\theta}\,\beta^{\alpha-1}\Gamma(1-\alpha)
\;\le\; C\,\varepsilon^{\theta}.
\end{equation}

Consider $r=\tau-s$. Using the linear semigroup estimate (see Lemma \ref{lemma:ConvSemigruposLin}),
there exist $C,c>0$ independent of $\varepsilon$ and fixed $\theta\in(0,1)$ such that
\[
\bigl\|e^{-(\bar L_\varepsilon)^- r}E - E e^{-(\bar L_0)^- r}\bigr\|_{\mathcal L(X_0,X_\varepsilon^\alpha)}
\;\le\;
C\,e^{-c r}\,r^{-\alpha(1-\theta)-\theta}\,\varepsilon^\theta,\quad r>0.
\]
Hence, since $G_0(v^0(s),S_0^*(v^0(s)))$ is uniformly bounded in $X_0$ (the orbit stays in a bounded set and
$F_0\in C^2$ there),
\begin{align}\label{eq630}
I_3
&\le C\,\varepsilon^\theta \int_{0}^{\infty} e^{-c r}\, r^{-\alpha(1-\theta)-\theta}
\bigl\|G_0\bigl(v^{0}(\tau-r),S_0^*(v^{0}(\tau-r))\bigr)\bigr\|_{X_0}\,dr \nonumber\\
&\le C\,\varepsilon^\theta \int_{0}^{\infty} e^{-c r}\, r^{-\alpha(1-\theta)-\theta}\,dr \ \le C\,\varepsilon^\theta.
\end{align}

Therefore, using \eqref{eq627}, \eqref{eq629} and \eqref{eq630}, we have 
\begin{eqnarray*}
\ds\left\|S^*_{\varepsilon}(E\eta) \!-\! S^*_{0}(\eta)\right\|_{X_{\varepsilon}^{\al}} &\leq &\ds C\,\varepsilon^{\theta}+ \tilde{M}\rho \beta^{-\al}\Gamma(\al)\left\|\vert S^*_{\varepsilon}- S_0^*\vert\right\| +\\ \ds&+& \rho \tilde{M}(1+\Delta)\!\!\int_{-\infty}^{\tau}\!e^{-\beta(\tau-s)}(\tau-s)^{-\al}\!\left\|v^{\varepsilon}(s)- Ev^{0}(s)\right\|_{X_{\varepsilon}^{\al}}\mathrm{d}s.
\end{eqnarray*}

Now, let us estimate $\left\|v^{\varepsilon}(s)- Ev^{0}(s)\right\|_{X_{\varepsilon}^{\al}}$. Note that
\begin{eqnarray*}
&&\big\|v^{\varepsilon}(t) - Ev^{0}(t)\big\|_{X_{\varepsilon}^{\al}} \ \, \leq \big\|e^{-(\bar{L}_{\varepsilon})^+(\tau-s)}E\eta-Ee^{-(\overline{L}_0)^+(\tau-s)}\eta\big\|_{X_{\varepsilon}^{\al}}\mathrm{d}s\\ &&+ \int_{t}^{\tau}\big\|e^{-(\bar{L}_{\varepsilon})^+(t-s)}H_{\varepsilon}(v^{\varepsilon}(s),S^*_{\varepsilon}(v^{\varepsilon}(s)))-e^{-(\bar{L}_{\varepsilon})^+(t-s)}H_{\varepsilon}(Ev^{0}(s),ES_0^*(v^{0}(s)))\big\|_{X_{\varepsilon}^{\al}}\mathrm{d}s\\ &&+ \int_{t}^{\tau}\big\|e^{-(\bar{L}_{\varepsilon})^+(t-s)}H_{\varepsilon}(Ev^{0}(s),ES_0^*(v^{0}(s)))-e^{-(\bar{L}_{\varepsilon})^+(t-s)}EH_{0}(v^{0}(s),S_0^*(v^{0}(s)))\big\|_{X_{\varepsilon}^{\al}}\mathrm{d}s\\ &&+ \int_{t}^{\tau}\big\|e^{-(\bar{L}_{\varepsilon})^+(t-s)}EH_{0}(v^{0}(s),S_0^*(v^{0}(s)))-Ee^{-(\overline{L}_0)^+(t-s)}H_{0}(v^{0}(s),S_0^*(v^{0}(s)))\big\|_{X_{\varepsilon}^{\al}}\mathrm{d}s\\
&&\leq \ds C\,\varepsilon^{\theta}e^{\beta(t-\tau)} + \tilde{M}\rho\left\|\vert S^*_{\varepsilon}- S_0^*\vert\right\| \int_t^{\tau}e^{\beta(t-s)}\mathrm{d}s + \rho \tilde{M}(1+\Delta)\!\!\int_{t}^{\tau}\!e^{\beta(t-s)}\!\left\|v^{\varepsilon}(s)- Ev^{0}(s)\right\|_{X_{\varepsilon}^{\al}}\mathrm{d}s.
\end{eqnarray*}

Again, by Gronwall's inequality for $\phi_{\varepsilon}(t) = e^{\beta(\tau-t)}\!\left\|v^{\varepsilon}(s)- Ev^{0}(s)\right\|_{X_{\varepsilon}^{\al}}$, we have 
\begin{eqnarray*}
	\ds\left\|v^{\varepsilon}(t) - Ev^{0}(t)\right\|_{X_{\varepsilon}^{\al}} \ \leq \ds \left(C\,\varepsilon^{\theta}+ \tilde{M}\rho\beta^{-1}\left\|\vert S^*_{\varepsilon}- S_0^*\vert\right\|\right) e^{[\rho M(1+\Delta)-\beta](\tau-t)}.
\end{eqnarray*}
Therefore,
\begin{align*}
\left\|S^*_{\varepsilon}(E\eta) \!-\! S^*_{0}(\eta)\right\|_{X_{\varepsilon}^{\al}} \ &\leq \ds C\,\varepsilon^{\theta}+ \tilde{M}\rho \beta^{-\al}\Gamma(\al)\left\| \vert S^*_{\varepsilon}- S_0^*\vert \right\| +\\ &+ \rho \tilde{M}(1+\Delta)\!\!\int_{-\infty}^{\tau}\! e^{[\rho \tilde{M}(1+\Delta)-2\beta](\tau-s)}(\tau-s)^{-\al}\!\left(C\,\varepsilon^{\theta}+ \tilde{M}\rho\beta^{-1}\left\|\vert S^*_{\varepsilon}- S_0^*\vert\right\|\right)\mathrm{d}s. \\
& \leq \ds C\,\varepsilon^{\theta}+\rho \tilde{M}\beta^{-\al}\Gamma(\al)\left[1+\rho \tilde{M}\left(1+\Delta\right)\beta^{-\al}\left(2\beta - \rho \tilde{M}\left(1+\Delta\right)\right)^{-\al}\right]\left\|\vert S^*_{\varepsilon}- S_0^*\vert\right\|.
\end{align*}

\noindent Since, $\rho \tilde{M}\beta^{-\al}\Gamma(\al)\big[1+\rho \tilde{M}\left(1+\Delta\right)\beta^{-\al}\left(2\beta - \rho \tilde{M}\left(1+\Delta\right)\right)^{-\al}\big] \leq \tfrac{1}{2},$ we have  
\begin{align*}
\left\|S^*_{\varepsilon}(E\eta) \!-\! ES^*_{0}(\eta)\right\|_{X_{\varepsilon}^{\al}} \leq C\varepsilon^{\theta},
\end{align*}
for some constant $C>0$, independent of $\varepsilon$. This completes the proof.
\fimdemo

\begin{corolario}\label{cor533}
Assume the conditions of Theorem $\ref{teo:unstable-manifold}$ and $\mathcal{E}_0=\left\lbrace u^*_{0,1}, \cdot\cdot\cdot, u^*_{0,n_0}\right\rbrace$ where each $u^*_{0,i}$, $i=1,\cdot\cdot\cdot,n_0$ is hyperbolic. Then, there are $\delta>0$ and $\varepsilon_0 < 1$ such that, for all $\varepsilon \in [0, \varepsilon_0]$, $\mathcal{E}_{\varepsilon}=\left\lbrace u^*_{\varepsilon,1}, \cdot\cdot\cdot, u^*_{\varepsilon,n_0}\right\rbrace$, and its local unstable manifolds $W_{\varepsilon, \mathrm{loc}}^u(u^*_{\varepsilon,i})$, $i=1,\cdot\cdot\cdot,n_0$, behave continuously in $\varepsilon$ as $\varepsilon \rightarrow 0$. 
\end{corolario}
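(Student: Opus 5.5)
The plan has two parts: first show that the equilibria persist and are in one-to-one correspondence, then transfer the continuity of the unstable manifolds from graphs to local unstable manifolds.

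\textbf{Equilibria.} Since $\mathcal{E}_0$ consists of finitely many hyperbolic points, I apply Theorem \ref{teoconvequil} to each $u^*_{0,i}$. This gives $\delta_i>0$ and $\varepsilon_i>0$ such that, for $\varepsilon\le\varepsilon_i$, the $\delta_i$-neighborhood of $Eu^*_{0,i}$ contains exactly one equilibrium $u^*_{\varepsilon,i}$, with $\|u^*_{\varepsilon,i}-Eu^*_{0,i}\|_{X^\alpha_\varepsilon}\le C\varepsilon$. Set $\delta=\min_i\delta_i$, shrinking it if needed so that the $\delta$-balls around the $Eu^*_{0,i}$ are pairwise disjoint.

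To rule out extra equilibria, argue by contradiction. Suppose there are $\varepsilon_k\to0$ and $u^{\varepsilon_k}\in\mathcal{E}_{\varepsilon_k}$ lying outside $\bigcup_i B_\delta(Eu^*_{0,i})$. Theorem \ref{thm:53} (upper semicontinuity of equilibria) gives a subsequence that $E$-converges to some $u^0\in\mathcal{E}_0$. Hence $u^0=u^*_{0,i}$ for some $i$, so the $u^{\varepsilon_k}$ eventually enter $B_\delta(Eu^*_{0,i})$, a contradiction. Therefore $\mathcal{E}_\varepsilon=\{u^*_{\varepsilon,1},\dots,u^*_{\varepsilon,n_0}\}$ for all small $\varepsilon$.

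By Proposition \ref{propconcompoplinea}, applied at the nearby points $u^*_{\varepsilon,i}$ together with the compact convergence of the linearizations from Subsection \ref{section:5.4}, each $u^*_{\varepsilon,i}$ is hyperbolic. Moreover, the number of eigenvalues of $\bar L_\varepsilon$ enclosed by $\bar\Gamma$ equals that of $\bar L_0$. This uses \eqref{eq:QeQ0}: since $\|Q^+_\varepsilon E-EQ^+_0\|\to0$, the ranks of the projections agree for small $\varepsilon$.

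\textbf{Unstable manifolds.} For each $i$, I then invoke Theorem \ref{teo:unstable-manifold} around $u^*_{\varepsilon,i}$. The constants $\tilde M,\beta$ in \eqref{eq621} and the Lipschitz bounds \eqref{eq620} are uniform in $\varepsilon$, so a single choice of $\rho,D,\Delta$ and $\varepsilon_0$ works for all $i$ simultaneously. The theorem gives $W^u_{\varepsilon,\mathrm{loc}}(u^*_{\varepsilon,i})=u^*_{\varepsilon,i}+\{v+S^*_{\varepsilon,i}(v):v\in Q^+_{\varepsilon,i}X^\alpha_\varepsilon,\ \|v\|\le\delta\}$, together with $\||S^*_{\varepsilon,i}E-ES^*_{0,i}|\|\le C\varepsilon^\theta$.

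Continuity then means an estimate of the symmetric Hausdorff distance $\mathrm{dist}_H\bigl(W^u_{\varepsilon,\mathrm{loc}}(u^*_{\varepsilon,i}),E\,W^u_{0,\mathrm{loc}}(u^*_{0,i})\bigr)\le C\varepsilon^\theta$.

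\textbf{Hausdorff estimate.} Given a point $u^*_{0,i}+\eta+S^*_{0,i}(\eta)$ of the limit manifold, I compare it with the point $u^*_{\varepsilon,i}+Q^+_\varepsilon E\eta+S^*_{\varepsilon,i}(Q^+_\varepsilon E\eta)$ of the perturbed manifold. The difference splits into three terms:
\begin{itemize}
\item the equilibrium error, of order $\varepsilon$;
\item the projection error $\|Q^+_\varepsilon E\eta-E\eta\|=\|(Q^+_\varepsilon E-EQ^+_0)\eta\|$, of order $\varepsilon$ by \eqref{eq:QeQ0};
\item the graph error, bounded by $\Delta\|Q^+_\varepsilon E\eta-E\eta\|+C\varepsilon^\theta$.
\end{itemize}

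The converse direction is the delicate step. Perturbed points are parametrized over $Q^+_\varepsilon X^\alpha_\varepsilon$, and these need to be written as $Q^+_\varepsilon E\eta$ for some $\eta$. Since both projections have the same finite rank and $Q^+_\varepsilon E|_{Q^+_0X_0}$ is close to an isomorphism onto $Q^+_\varepsilon X^\alpha_\varepsilon$, it is invertible with uniformly bounded inverse for small $\varepsilon$. This restores the symmetric estimate, and this isomorphism step is where I expect the main care to be needed. A final shrinking of the radii handles the boundary of the local balls.
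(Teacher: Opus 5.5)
Your route is the one the paper intends. Corollary~\ref{cor533} is stated without a separate proof, as a consequence of the equilibrium results of Section~\ref{sec:6} (Theorem~\ref{thm:53}, the finiteness/hyperbolicity lemma and Theorem~\ref{teoconvequil}) together with the graph estimate \eqref{eqn:estimate-Se-So} of Theorem~\ref{teo:unstable-manifold}. Your contradiction argument excluding extra equilibria, and your conversion of the graph estimate into a Hausdorff bound, are a correct and more explicit version of that.

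\textbf{The rank step.} One step is not justified as written, and your ``converse direction'' rests on it: the equality $\mathrm{rank}\,Q^+_\varepsilon=\mathrm{rank}\,Q^+_0$. The bound \eqref{eq:QeQ0} only controls $Q^+_\varepsilon$ on $\mathrm{Ran}(E)$, i.e.\ on functions independent of $y$.
\begin{itemize}
\item It shows that $Q^+_\varepsilon E$ is injective on $Q^+_0X_0$ with uniformly bounded inverse. Hence $\mathrm{rank}\,Q^+_\varepsilon\ge\mathrm{rank}\,Q^+_0$.
\item It says nothing about directions in $Q^+_\varepsilon X_\varepsilon$ far from $\mathrm{Ran}(E)$. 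If such extra unstable directions existed, $Q^+_\varepsilon E|_{Q^+_0X_0}$ would not be onto, and $\mathrm{dist}(W^u_{\varepsilon,\mathrm{loc}},EW^u_{0,\mathrm{loc}})$ would not tend to zero.
\end{itemize}

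The missing inequality needs compactness. Suppose the rank of $Q^+_\varepsilon$ were larger. Then there is $w^\varepsilon\in Q^+_\varepsilon X_\varepsilon$ with $\|w^\varepsilon\|_{X_\varepsilon}=1$ orthogonal to $E(Q^+_0X_0)$; this is equivalent to orthogonality to $Q^+_\varepsilon E(Q^+_0X_0)$, since $\bar L_\varepsilon$ is selfadjoint and $Q^+_\varepsilon$ is an orthogonal projection.
\begin{itemize}
\item Write $w^\varepsilon=\bar L_\varepsilon^{-1}(\bar L_\varepsilon Q^+_\varepsilon w^\varepsilon)$, where $\bar L_\varepsilon Q^+_\varepsilon$ is uniformly bounded by the Riesz formula.
\item The compact convergence of $\bar L_\varepsilon^{-1}$ (Lemma~\ref{lemma:compactconv} and Subsection~\ref{section:5.4}) gives a subsequence with $w^\varepsilon\to Ew^0$.
\item The $EE$-convergence of the resolvents on $\bar\Gamma$, hence of $Q^+_\varepsilon$, forces $w^0=Q^+_0w^0$.
\item Passing to the limit in the orthogonality gives $w^0\perp Q^+_0X_0$ in $L^2_a(0,1)$, while $w^0\neq0$. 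This is a contradiction.
\end{itemize}
So you need the compact convergence $Q^+_\varepsilon\to Q^+_0$, not only \eqref{eq:QeQ0}. With that inserted, your isomorphism step and the symmetric Hausdorff bound go through.

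\textbf{A smaller point.} $S^*_\varepsilon$ is defined only on $Q^+_\varepsilon X^\alpha_\varepsilon$, so $S^*_\varepsilon(E\eta)$ is not meaningful. The term $\Delta\|Q^+_\varepsilon E\eta-E\eta\|$ in your graph error should therefore be dropped. Read \eqref{eqn:estimate-Se-So} as $\sup_\eta\|S^*_\varepsilon(Q^+_\varepsilon E\eta)-ES^*_0(\eta)\|_{X^\alpha_\varepsilon}\le C\varepsilon^\theta$; this bounds the graph error directly.
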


\subsection{Rate of convergence and attraction of attractors}

We now state our main theorem, which ensures persistence of hyperbolic equilibria and an explicit rate convergence of the perturbed attractors to the limit attractor.

\begin{theorem}\label{teoprincipal}
Let $\left\lbrace T_{\ep}(t): t \geq 0\right\rbrace$ be the gradient semigroup associated to the problem $\eqref{eq1}$ with global attractor $\mathcal{A}_{\ep}$. Let us assume that, in bounded sets of $X_{\varepsilon}^{\alpha}$, there are $\theta \in (0,1)$, $\varepsilon_0>0$, $\varrho>0$, $\beta >0$ and $C'>0$ such that
\[
\|T_{\varepsilon}(t) E - E T_0(t)\|_{X_{\varepsilon}^{\alpha}} 
\leq C' e^{\varrho t} t^{-\alpha(1 - \theta) - \theta}\varepsilon^{\theta},
\]
for all $t >0$ and $\varepsilon \in (0,\varepsilon_0]$. Furthermore, we assume that the equilibrium points of the limit problem, $\mathcal{E}_0=\left\lbrace u^*_{0,1}, \cdot\cdot\cdot, u^*_{0,n_0}\right\rbrace$ are finite and hyperbolic. Then, for all $\varepsilon \in (0, \varepsilon_0]$:
	
\noindent \textbf{$\bf(i)$} The semigroup $\left\lbrace T_{\varepsilon}(t): t \geq 0\right\rbrace$ has a finite number of equilibrium points $\mathcal{E}_{\varepsilon}=\left\lbrace u^*_{\varepsilon,1}, \cdot\cdot\cdot, u^*_{\varepsilon,n_0}\right\rbrace$ and $\mathcal{A}_{\varepsilon} = \bigcup\limits_{i=1}^{n_0}W_{\varepsilon}^u(u^*_{\varepsilon,i})$. Moreover, the family of attractors $\left\lbrace\mathcal{A}_{\varepsilon}\right\rbrace_{\varepsilon\in [0, \varepsilon_0]}$ is lower semicontinuous at $\varepsilon =0;$

\noindent \textbf{$\bf(ii)$} There exists $\rho>0$ such that, given $B\subset X_{\varepsilon}^{\al}$ bounded, there exists $C_1=C_1(B)>0$, such that $$\mathrm{dist}(T_{\varepsilon}(t)B, \mathcal{A}_{\varepsilon})\leq C_1\,e^{-\rho t}, \ \forall t \geq 1;$$

\noindent \textbf{$\bf(iii)$} There exists $C_2>0$ such that
\begin{align*}
\mathrm{dist}(\mathcal{A}_{\varepsilon}, \mathcal{A}_0) + \mathrm{dist}(\mathcal{A}_0, \mathcal{A}_{\varepsilon}) \leq
 C_2\,\varepsilon^{\frac{\theta\rho}{\rho+\varrho}},
\end{align*}
where $\mathrm{dist}(\cdot,\cdot)$ denotes the Hausdorff semi-distance.
\end{theorem}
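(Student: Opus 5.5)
The proof will follow the standard scheme for gradient semigroups with hyperbolic equilibria (as in Carvalho--Langa--Robinson and the works cited in Section~\ref{sec:6}), assembling results already established here.

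\textbf{Part (i).} Since $\mathcal{E}_0$ is finite with each element hyperbolic, Theorem~\ref{teoconvequil} and Corollary~\ref{cor533} give, for $\varepsilon$ small, exactly $n_0$ equilibria $u^*_{\varepsilon,i}$. Each is hyperbolic and satisfies $\|u^*_{\varepsilon,i}-Eu^*_{0,i}\|_{X^\alpha_\varepsilon}\le C\varepsilon$. By Theorem~\ref{teo:unstable-manifold}, each has a local unstable manifold given as the graph of $S^*_{\varepsilon,i}$, with $\||S^*_{\varepsilon,i}E-ES^*_{0,i}|\|\le C\varepsilon^\theta$. The semigroup is gradient, since the energy functional is a Lyapunov function, so the characterization of gradient attractors yields $\mathcal{A}_\varepsilon=\bigcup_i W^u_\varepsilon(u^*_{\varepsilon,i})$.

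For lower semicontinuity, take $u^0\in\mathcal{A}_0$ and write $u^0=T_0(t)w^0$ with $w^0\in W^u_{0,\mathrm{loc}}(u^*_{0,i})$. Approximate $w^0$ by $w^\varepsilon$, the point of the graph of $S^*_{\varepsilon,i}$ over $Q^+_\varepsilon E Q^+_0 w^0$, using \eqref{eq:QeQ0} and \eqref{eqn:estimate-Se-So}. Then Theorem~\ref{contsemig} gives $T_\varepsilon(t)w^\varepsilon\to ET_0(t)w^0$, and $T_\varepsilon(t)w^\varepsilon\in\mathcal{A}_\varepsilon$.

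\textbf{Part (ii).} This is the key step: exponential attraction uniform in $\varepsilon$. I would argue by the standard route in three steps.
\begin{enumerate}
\item Using the uniform hyperbolicity of the equilibria and the uniform exponential attraction of local unstable manifolds from Part~3 of Theorem~\ref{teo:unstable-manifold}, with rate $\rho_1$ independent of $\varepsilon$, choose a neighborhood $\delta$ of each equilibrium on which trajectories approach $W^u_{\varepsilon,\mathrm{loc}}$ at rate $\rho_1$.
\item Use the gradient structure together with the uniform upper semicontinuity of the attractors. This shows that any trajectory starting in a bounded set $B$ enters the union of these $\delta$-neighborhoods within a uniform time $T_B$, and can visit them and leave only finitely many times, the number bounded by $n_0$, by the strict decrease of the Lyapunov function. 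The uniform dissipativity (Theorem~\ref{teosolgloblim1-eps}) and the uniform $L^\infty$ bounds \eqref{eq:liminf} make the constants independent of $\varepsilon$.
\item Concatenate the local exponential estimates along such a chain, in which transit times between neighborhoods are uniformly bounded. This gives $\mathrm{dist}(T_\varepsilon(t)B,\mathcal{A}_\varepsilon)\le C_1e^{-\rho t}$ for a rate $\rho\le\rho_1$ independent of $\varepsilon$.
\end{enumerate}
The main obstacle is controlling the passage times uniformly in $\varepsilon$. I would first try the argument of Carvalho--Langa--Robinson on uniform exponential dominance of perturbed gradient systems, reducing it to a Lyapunov-function gap argument.

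\textbf{Part (iii).} This is the usual balancing of the two errors. For $u^\varepsilon\in\mathcal{A}_\varepsilon$, invariance gives $u^\varepsilon=T_\varepsilon(t)w^\varepsilon$ with $w^\varepsilon\in\mathcal{A}_\varepsilon$, which lies in a bounded set. Compare $T_\varepsilon(t)w^\varepsilon$ with $ET_0(t)Mw^\varepsilon$. By the hypothesis of the theorem, together with Lemma~\ref{lemma22} and the uniform $H^1_\varepsilon$ bound \eqref{limPe} to control $w^\varepsilon-EMw^\varepsilon$, this difference is at most $C e^{\varrho t}\varepsilon^\theta$ for $t\ge1$. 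By (ii) applied at $\varepsilon=0$, $\mathrm{dist}(T_0(t)Mw^\varepsilon,\mathcal{A}_0)\le C_1e^{-\rho t}$. Hence
\[
\mathrm{dist}(\mathcal{A}_\varepsilon,E\mathcal{A}_0)\le C\bigl(e^{\varrho t}\varepsilon^\theta+e^{-\rho t}\bigr).
\]
The symmetric semidistance is handled the same way, starting from $u^0=T_0(t)w^0$ with $w^0\in\mathcal{A}_0$ and using (ii) for $\varepsilon>0$ applied to the bounded set $E\mathcal{A}_0$. Choosing $t$ with $e^{-(\rho+\varrho)t}=\varepsilon^\theta$, that is $t=\frac{\theta}{\rho+\varrho}|\ln\varepsilon|$, which is at least $1$ for $\varepsilon$ small, balances the two terms. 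This gives the bound $C_2\varepsilon^{\theta\rho/(\rho+\varrho)}$.
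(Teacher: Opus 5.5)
Your proposal follows essentially the paper's route. The paper's proof consists only of citations: Corollary~\ref{cor533} with \cite[Theorem 2.13]{alexandre1} for (i), \cite[Theorem 3.9]{alexandre2} together with the $\varepsilon$-uniform attraction rate of Theorem~\ref{teo:unstable-manifold} for (ii), and the Babin--Vishik balancing \cite[Theorem 8.2.2]{Babin1992} for (iii); your three parts unpack exactly these arguments. One step you add, which the paper skips, is comparing $T_\varepsilon(t)w^\varepsilon$ with $T_\varepsilon(t)EMw^\varepsilon$ via Lemma~\ref{lemma22}. That step is needed, but it (like your lower semicontinuity step) uses an $\varepsilon$-uniform Lipschitz bound $\|T_\varepsilon(t)w-T_\varepsilon(t)v\|_{X^\alpha_\varepsilon}\le Ct^{-\alpha}e^{Lt}\|w-v\|_{X_\varepsilon}$, and you get the stated exponent only if $L\le\varrho$; otherwise $\varrho$ must be replaced by $\max\{\varrho,L\}$.
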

\demo
Items (i)--(iii) follow, respectively, from Corollary \ref{cor533} together with \cite[Theorem 2.13]{alexandre1}, from \cite[Theorem 3.9]{alexandre2} combined with Theorem~\ref{teo:unstable-manifold}, and from \cite[Theorem 8.2.2]{Babin1992}, using the semigroup estimate assumed in the statement.
\fimdemo

\medskip

\noindent\textbf{Acknowledgments:} \textit{The authors would like to thank J. M. Arrieta for helpful discussions and valuable suggestions that greatly improved this work. The first author EATL was supported by Grants FAPESP \#2024/14305-2 (Brazil). The third author MCP was supported by Grants CNPq \#303561/2024-6 and FAPESP \#20/14075-6 (Brazil) and gratefully acknowledges the medical team led by Prof. Dr. G. Lepski (HC/FMUSP) for their outstanding care, which was instrumental in saving his life.}


\begin{thebibliography}{99}

\bibitem{AAB} Aragão, G. S.; Arrieta, J. M.; Bruschi, S. M. {\it Continuity of attractors of parabolic equations with nonlinear boundary conditions and rapidly varying boundaries. The case of a Lipschitz deformation}, J. Differential Equations, 429 (2025), 460--502.

\bibitem{patricia} Araujo P. N., Nakasato J., Pereira M. C. {\it A semilinear elliptic equation with homogeneous Neumann boundary conditions posed in thin domains with outward peaks}, to appear Revista Mat. Complutense, DOI:10.1007/s13163-025-00548-2.

\bibitem{flank} Arrieta, J. M.; Bezerra, F. D. M.; Carvalho, A. N. {\it Rate of convergence of global attractors of some perturbed reaction-diffusion problems}, Topol. Methods Nolinear Anal., 41 2 (2013), 229--253.

\bibitem{arrieta} Arrieta, J. M.; Carvalho, A. N. {\it Spectral convergence and nonlinear dynamics of reaction--diffusion equations under perturbations of the domain}, J. Differential Equations, 199 (2004), 143--178.


\bibitem{german1} Arrieta, J. M.; Carvalho, A. N.; Lozada-Cruz, G. {\it Dynamics in dumbbell domains III. Continuity of attractors}, J. Differential Equations, 247 (2009), 225--259.

\bibitem{ACPS} Arrieta, J. M.; Carvalho, A. N.; Pereira, M. C.; Silva, R. P. {\it Semilinear parabolic problems in thin domains with a highly oscillatory boundary}, Nonlinear Analysis 74 (2011) 5111--5132.



\bibitem{esperanza} Arrieta, J. M.; Santamaría, E. {\it Distance of attractors of reaction-difusion equations in thin domains}, J. Differential Equations, 263 (2017), 5459--5506.

\bibitem{esperanza1} Arrieta, J. M.; Santamaría, E. {\it Estimates on the distance of inertial manifolds}, Discrete and Continuous Dynamical Systems, A 34 (10) (2014), 3921--3944.


\bibitem{am2020} Arrieta, J. M.; Villanueva-Pesquera, M. {\it Elliptic and parabolic problems in thin domains with doubly oscillatory boundary}, Communications in Pure and Applied Analysis 19 (4) (2020), 1891--1914. 

\bibitem{Babin1992}
Babin, A.; Vishik, M. 
\emph{Attractors of Evolution Equations},
Elsevier, 1992.

\bibitem{barros} Barros, S. R. M.; Pereira, M. C., {\it Semilinear elliptic equations in thin domains with reaction terms concentrating on boundary}, Journal of Mathematical Analysis and Applications 441.1 (2016), 375-392.

\bibitem{campiti} Campiti, M.; Metafune, G. and Pallara P. {\it Degenerate Self-adjoint Evolution Equations on the Unit Interval}, Semigroup Forum, 57 (1998), 1--36.


\bibitem{alexandre2} Carvalho, A. N.; Cholewa, J. W. {\it Exponential global attractors for semigroups in metric spaces with applications to differential equations}, Ergodic Theory and Dynamical Systems, Cambridge, v. 31, n. 6, (2011), 1641--1667.

\bibitem{Carvalho2010}
Carvalho, A. N.; Cholewa, J.; Dlotko, T. 
{\it Equi-exponential attraction and rate of convergence of attractors for singularly perturbed evolution equations},
\emph{Proc. Roy. Soc. Edinburgh Sect. A}, 144(1) (2014), 13–51.

\bibitem{carvalho} Carvalho, A. N., Cholewa, J. W., Lozada-Cruz, G.; Primo, M. R. {\it Reduction of infinite dimensional systems to finite dimensions: compact convergence approach}, SIAM J. Math. Anal. Vol. 45, No. 2, 600--638.

\bibitem{alexandre1} Carvalho, A. N.; Langa, J. A. {\it An extension of the concept of gradient semigroups which is stable under perturbation}, J. Differential Equations, 246 (2009), 2646--2668.

\bibitem{A.N.Carvalho2010}
Carvalho, A. N.; Langa, J.; Robinson, J. C. 
\emph{Attractors for Infinite-Dimensional Non-Autonomous Dynamical Systems},
Springer, 2010.

\bibitem{Carvalho}
Carvalho, A. N.; Pires, L.
{\it Parabolic equations with localized large diffusion: rate of convergence of attractors},
\emph{Topol. Methods Nonlinear Anal.}, 53(1) (2019), 1–23.

\bibitem{leonardo} Carvalho, A. N.; Pires, L. {\it Rate of convergence of attractors for singularly perturbed semilinear problems}, J. Math. Anal. Appl. 452 (2017) 258--296.



\bibitem{Carvalho-Piskarev:06} Carvalho, A. N., Piskarev, S. {\it A general approximations scheme for attractors of abstract parabolic problems}, Numerical Functional Analysis and Optimization, v. 27, n. 7/8 (2006), 785--829.

\bibitem{cho} Cholewa, J. W.; Dlotko, T. {\it Global attractors in abstract parabolic problems}, Cambridge: Cambridge University Press (2000).

\bibitem{Daners} Daners, D. \emph{Domain perturbation for linear and semi-linear boundary value problems}, Handbook of differential equations: stationary partial differential equations,  Vol. VI, 1-81, Elsevier/North-Holland, Amsterdam, 2008.

\bibitem{davies} Davies E.B., {\it Heat kernels and spectral theory}, Cambridge: Cambridge University Press, (1989).


\bibitem{Hale1988}
Hale, J. K. 
\emph{Asymptotic Behavior of Dissipative Systems},
Math. Surveys Monogr., Vol. 25, Amer. Math. Soc., 1988.

\bibitem{haleetall}
Hale, J. K.; Raugel, G. 
{\it Lower semi-continuity of attractors of gradient systems and applications},
\emph{Ann. Mat. Pura Appl.}, 154 (1989), 281–326.

\bibitem{raugel} Hale, J. K.; Raugel, G. {\it Reaction--difusion equation in thin domains}, J. Math. Pures Appl., (9) 71 (1) (1992) 33--95.


\bibitem{henry} Henry, D. {\it Geometric Theory of Semilinear Parabolic Equations}, Berlin: Springer--Verlag (1981).


\bibitem{Henry_domain}
Henry, D. 
\newblock {\em Perturbation of the Boundary in Boundary-Value Problems of
  Partial Differential Equations}.
\newblock Cambridge University Press, 2005.

\bibitem{KOVARIK20191600}
Kova{\v r}\`ik, H.; Pankrashkin, K. {\it Robin eigenvalues on domains with peaks}, J. of Differential Equations, 267(3) (2019) 1600--1630.

\bibitem{baddomains} Maz’ya, V. G.; Poborchi, S. V. {\it Imbedding theorems for Sobolev spaces on domains with peak and on Hölder domains}, St. Petersburg Mathematical Journal, 18(4):583–605, 2007.

\bibitem{Pereira2007}
Pereira, A. L.; Pereira, M. C.
\newblock {\it Continuity of attractors for a reaction-diffusion problem with
  nonlinear boundary conditions with respect to variations of the domain}.
\newblock {\em Journal of differential equations}, 239:343--370, 2007.

\bibitem{mcp} Pereira M. C., \newblock {\it Parabolic problems in highly oscillating thin domains}. \newblock {\em Annali di Matematica Pura ed Applicata} v. 194 (2015) 1203--1244. 

\bibitem{PPires2024}
Pereira, M. C.; Pires, L. 
{\it Rate of convergence for reaction-diffusion equations with nonlinear Neumann boundary conditions and $C^1$ variation of the domain},
\emph{J. Evol. Equ.}, 24(5) (2024), 1--41.















\bibitem{elaine} Tavares-Lima, E. A.; Lozada-Cruz, G. J., {\it Dynamics of parabolic equations in domains with a small hole II. Continuity of the attractors}, Journal of Differential Equations, v. 395 (2024), 308--332.

\bibitem{triebel} Triebel H., {\it Interpolation theory, function spaces, differential operators}, Berlin: VEB Deutscher, (1978).

\bibitem{vainikkko} Vainikko, G. {\it Approximative methods for nonlinear equations (two approaches to the convergence problem)}, Nonlinear Analysis, Theory, Methods and Applications,  v. 2, n. 6 (1978), 647--687.




\bibitem{NV} Varchon, N. \emph{Domain perturbation and invariant manifolds}, J. Evol. Equ. 12 3 (2012) 547-569. 




\end{thebibliography}
\end{document}